\newcommand{\ot}{\otimes}
\definecolor{Maroon}{rgb}{0.6 0 0}
\definecolor{Prussian}{rgb}{0.05 0 0.6}
\definecolor{Emerald}{rgb}{0 0.5 0.1}
\newtheoremstyle{mytheorem}%
{10.0pt plus 2.0pt minus 2.0pt} 
{10.0pt plus 2.0pt minus 2.0pt} 
{\itshape} 
{} 
{\sc} 
{.} 
{ } 
{} 
\newtheoremstyle{mydefinition}%
{10.0pt plus 2.0pt minus 2.0pt} 
{10.0pt plus 2.0pt minus 2.0pt} 
{} 
{} 
{\sc} 
{.} 
{ } 
{} 
\newtheoremstyle{myexample}%
{10.0pt plus 2.0pt minus 2.0pt} 
{10.0pt plus 2.0pt minus 2.0pt} 
{\small} 
{} 
{\sc} 
{.} 
{ } 
{} 
\newtheoremstyle{myremark}%
{10.0pt plus 2.0pt minus 2.0pt} 
{10.0pt plus 2.0pt minus 2.0pt} 
{} 
{} 
{\itshape} 
{.} 
{ } 
{} 
\theoremstyle{mytheorem}
\newtheorem{theorem}{Theorem}[section]
\newtheorem{lemma}[theorem]{Lemma}
\newtheorem{corollary}[theorem]{Corollary}
\newtheorem{proposition}[theorem]{Proposition} 
\theoremstyle{myremark}
\newtheorem{remark}[theorem]{Remark}
\newtheorem{convention}[theorem]{Convention}
\newtheorem{notation}[theorem]{Notation}
\theoremstyle{mydefinition}
\newtheorem{definition}[theorem]{Definition}
\newtheorem{question}[theorem]{Question}
\theoremstyle{myexample}
\newtheorem{example}[theorem]{Example}
\newtheoremstyle{myzusatz}
 {10.0pt plus 2.0pt minus 2.0pt} 
{10.0pt plus 2.0pt minus 2.0pt} 
{\itshape} 
{} 
{\sc} 
{.} 
{ } 
{\thmname{#1}\thmnumber{ #2}\thmnote{ #3}}
\theoremstyle{myzusatz}
\definecolor{gray1}{gray}{0.8}
\definecolor{gray2}{gray}{0.6}
\definecolor{gray3}{gray}{0.4}
\definecolor{gray4}{gray}{0.2}
\DeclareMathOperator{\Aut}{\mathrm{Aut}}
\let\Aut\relax
\DeclareMathOperator{\Aut}{\mathrm{Aut}}
\DeclareMathOperator{\im}{\mathrm{im}}
\DeclareMathOperator{\Obj}{\mathrm{Obj}}
\DeclareMathOperator{\Free}{\mathrm{Free}}
\DeclareMathOperator{\dottriangleleft}{\triangleleft\kern -3.2pt \cdot}
\definecolor{Maroon}{rgb}{0.6 0 0}
\definecolor{Prussian}{rgb}{0.05 0 0.6}
\definecolor{Emerald}{rgb}{0 0.5 0.1}
\definecolor{gray1}{gray}{0.8}
\definecolor{gray2}{gray}{0.6}
\definecolor{gray3}{gray}{0.4}
\definecolor{gray4}{gray}{0.2}
\newcommand{\set}[1]{\lbrace #1 \rbrace}
\newcommand{\One}{\mathbbm{1}}
\newcommand{\id}[1][]{\mathrm{id}_{#1}}
\newcommand{\blank}{\raisebox{-2pt}{\text{---}}}
\newcommand{\inv}[1]{{#1}^{-1}}
\newcommand{\Cc}{{\mathscr{C}}}
\newcommand{\refl}[2]{{#1}^{(#2)}}
\newcommand{\brGho}{\mathrm{b}\kern -1pt\mathbb{G}}
\title[Reflections and Drinfeld twists for set-theoretic Yang--Baxter maps]{Reflections and Drinfeld twists\\ for set-theoretic Yang--Baxter maps}\author{Davide Ferri}
\begin{document}
\begin{abstract}
The Yang--Baxter equation (\textsc{ybe}) and the reflection equation (\textsc{re}) both come from mathematical physics, and they can be defined in any monoidal category. 

For cartesian monoidal categories, we prove that every solution to the \textsc{re} provides a Drinfeld twist for a solution of the \textsc{ybe}. As we observe, Drinfeld twists of solutions are relevant for the following reason: two solutions to the \textsc{ybe} (in any strict monoidal category) are related by a Drinfeld twist, if and only if they induce equivalent representations of the braid group.

In the category of sets, it is known that every solution is associated with a structure group, which is a braided group in the sense of Lu, Yan, and Zhu (2000). Using De Commer's notion of a braided action, we then define group reflections for a braided group. We prove that group reflections provide group Drinfeld twists in the sense of Ghobadi.  

Finally, we characterise when a reflection on a solution $(X,r)$ can be extended to a group reflection on its structure group $G(X,r)$.
\end{abstract}
\address{%
\parbox[b]{0.9\linewidth}{Università di Torino, Department of Mathematics `G.\@ Peano', via
 Carlo Alberto 10, 10123 Torino, Italy.\\
 Vrije Universiteit Brussel, Department of Mathematics and Data Science, Pleinlaan 2, 1050, Brussels, Belgium.\\ ORCID: 0000-0001-8421-496X}}
 \email{d.ferri@unito.it, Davide.Ferri@vub.be}
\allowdisplaybreaks
\maketitle
\tableofcontents
\section{Introduction}
The set-theoretic Yang--Baxter equation (\textsc{ybe}) has been object of intense investigation since it was proposed by Drinfeld \cite{drinfeld2006some}: originated as a tool to produce linear solutions to the quantum Yang--Baxter equation, it then branched and found relevance well beyond its original purpose. 

We define the \textsc{ybe} here in the broadest known setting. Given a monoidal category $(\Cc,\ot,\One)$ with monoidal product $\ot$ and monoidal unit $\One$, a solution to the \textsc{ybe} is an object $V$ in $\Cc$ with a morphism $r\colon V^{\ot 2}\to V^{\ot 2}$ satisfying
\begin{equation}\label{ybe}\tag{\sc ybe}r_{12} r_{23} r_{12} = r_{23} r_{12} r_{23}.\end{equation}
Here $r_{12} = r\ot\id[V]$ and $r_{23} = \id[V]\ot r$ are endomorphisms of $V^{\ot 3}$---and even if we are being sloppy on the associativity constraints, this sloppiness will remain safe throughout the paper. We say that the solution is \textit{invertible} if $r$ is an isomorphism, and \textit{involutive} if $r^2 = \id[V^{\ot 2}]$.  The \textsc{ybe} is also called the \emph{braid relation}, and a solution is also called a \textit{braided object} in $\Cc$. The origin of this name is explained in Figure \ref{fig:braid}. \begin{figure}[t]
\begin{tikzpicture}[x=0.6cm, y=0.4cm]
\draw[rounded corners=20, draw=white, double=black, line width=2pt] (0,0)--(4,4)--(4,6);
\draw[rounded corners=20, draw=white, double=black, line width=2pt] (2,0)--(0,3)--(2,6);
\draw[rounded corners=20, draw=white, double=black, line width=2pt] (0,6)--(4,2)--(4,0);
\node() at (6,3) {$=$};
\draw[rounded corners=20, draw=white, double=black, line width=2pt] (8,0)--(8,2)--(12,6);
\draw[rounded corners=20, draw=white, double=black, line width=2pt] (10,0)--(12,3)--(10,6);
\draw[rounded corners=20, draw=white, double=black, line width=2pt] (8,6)--(8,4)--(12,0);
\end{tikzpicture}
\caption{Pictorial representation of the braid relation. Each strand represents an element of $X$, and each crossing represents an application of $r$.}\label{fig:braid}
\end{figure}
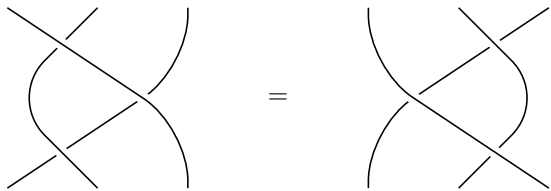All the braid-like diagrams, such as the one in Figure \ref{fig:braid}, will be read from top to bottom. 

To the present day, solutions to the \textsc{ybe} have mostly been investigated in the monoidal categories of sets ($\mathsf{Set}$), of $\Bbbk$-vector spaces ($\mathsf{Vec}_\Bbbk$), and of quivers over a set of vertices $\Lambda$ ($\mathsf{Quiv}_\Lambda$). The \textsc{ybe} in the category of vector spaces had already been introduced by Yang \cite{yang}, in the context of quantum integrable systems; and by Baxter \cite{baxter1972partition, baxter1985exactly}, independently, in statistical mechanics.  The term `set-theoretic \textsc{ybe}' refers to the \textsc{ybe} in $\mathsf{Set}$, which is the main topic of this paper.

The physical insight behind the \textsc{ybe} is roughly the following: the strands of Figure \ref{fig:braid} represent particles moving, with the arrow of time flowing from the top to the bottom of the diagram; and the Yang--Baxter map $r$ governs the interactions when the particles cross each other. The \textsc{ybe} on $r$, then, is related to the integrability of the system. 

But what if we introduce a non-holonomic constraint, forcing the particles to stay on the left-hand side of a `wall'? In the physical intuition, we now need an operator $k$ governing the interactions (`bouncings') between the particles and the wall. This operator should satisfy some relation with the `crossing' operator $r$: the relation is called the \textit{reflection equation}.

Again, we may define the reflection equation in full generality: given a solution $(V,r)$ to the \textsc{ybe} in a monoidal category $(\Cc,\ot ,\One)$, a morphism $k\colon V\to V$ satisfies the (right) reflection equation (\textsc{re}) with respect to $r$, if
\begin{equation}\label{re}\tag{\sc re} k_2 r k_2 r = r k_2 rk_2.\end{equation}
Here $k_2 = \id[V]\ot  k$. A graphical interpretation is given in Figure \ref{fig:re}. Of course, a notion of \textit{left} reflection equation $k_1 r k_1 r = rk_1 rk_1$, $k_1 = k\ot \id[V]$, may be given.

 The set-theoretic version of the \textsc{re} was given by Caudrelier, Crampé, and Zhang \cite{setTheoreticRE} (see also \cite{solitons,YBandReflDoikou,ReflectionVenSmoWes}). Hereafter, we concentrate on the case $\Cc = \mathsf{Set}$.
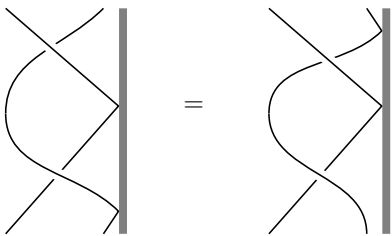
\begin{figure}
\centering
\begin{tikzpicture}
\draw[draw=white, double=black, line width=2pt] (-1.2, 3) to[out=-135, in=90] (-2.5, 1.6);
\draw[draw=white, double=black, line width=2pt] (-2.5,3) -- (-1, 1.7)--(-2.5, 0);
\draw[draw=white, double=black, line width=2pt] (-2.5, 1.6) to [out=-90] (-1,0.3) --(-1.2,0);
\draw[draw=white, double=black, line width=2pt] (2.3,3)--(2.5,2.7) to[out=-135, in=90] (1, 1.6);
\draw[draw=white, double=black, line width=2pt] (1,3) -- (2.5,1.7) --(1,0);
\draw[draw=white, double=black, line width=2pt] (1,1.6) to[out=-90, in=90] (2.3,0);
\draw[draw=gray, line width=3pt] (-0.94,0)--(-0.94,3);
\draw[draw=gray, line width=3pt] (2.56,0)--(2.56,3);
\node() at (0,1.7) {$=$};
\end{tikzpicture}
\caption{Pictorial representation of the reflection equation. Each crossing represents an application of $r$, and each bouncing on a lateral wall represents an application of $k$. By the \textsc{ybe} and the \textsc{re}, these diagrams can be considered up to homotopies that never drive the strands beyond the wall.}\label{fig:re}
\end{figure}

Lebed and Vendramin \cite{lebed2022reflection} made a crucial observation: the `bouncing sites' of the particles can be regarded as `virtual particles' themselves. The \textsc{re} in Figure \ref{fig:re}, then, describes a `crossing' between these two virtual particles. Notice that the old temporal dimension is now the spacial dimension where these virtual particles move; and the new arrow of time is instead the direction of the homotopy bringing the left-hand side of Figure \ref{fig:re} into the right-hand side. Lebed and Vendramin proved that this crossing of virtual particles is, in fact, a solution to the \textsc{ybe}, called the \textit{$k$-derived solution}; of which the \textit{derived solution} $r'$ is a special case.

In this paper, we prove that the $k$-derived solution is an instance of a \textit{Drinfeld twist} (we shall call this a \textit{reflection-twist}). Drinfeld twists of solutions have been defined by Kulish and Mudrov \cite{kulish2000twisting} (see also  \cite{ghobadi2021drinfeld}), and they are ways to deform a solution into a new one. Incidentally, we observe that Drinfeld twists can be defined for a solution in any monoidal category, and that two solutions are Drinfeld-twisted if and only if they induce equivalent representations of the braid group. 


A way to produce set-theoretic solutions is through \textit{braided groups} \cite{LYZ}; and conversely, every set-theoretic solution is associated with a certain braided group enjoying the universal property of a free object. Braided groups are groups $G$ equipped with a map $G\times G\to G\times G$ that behaves, in some way, like a \textit{constraint of abelianity} (for instance, the flip map $(a,b)\mapsto (b,a)$ is a braiding on $G$ if and only if $G$ is abelian). Such a `constraint of abelianity' is always a solution to the \textsc{ybe}.  

In \S\ref{sec:rebg}, we define a notion of \textit{group reflection} for braided groups, using De Commer's notion of braided actions \cite{de2019actions}. Interestingly, one of the axioms of a braided action will translate into \eqref{eq:weird}, which is a $k$-twisted involutivity. We then prove that group reflections are also Drinfeld twists, in the group-theoretic sense introduced by Ghobadi \cite{ghobadi2021drinfeld}.

Theorem \ref{thm:br-optimal} will tell us that, in the definition of group reflections, the first two axioms \eqref{eq:reflection-unity} and \eqref{eq:reflection-product} are `optimal' in order to make the twisted product into a group structure. The `twisted involutivity' \eqref{eq:weird} is not optimal in this sense---although it remains the most natural condition to require. An `optimal' condition is given in Theorem \ref{thm:br-optimal}. 

In \S\ref{sec:groupoid} we summarise what is known from \cite{ghobadi2021drinfeld} about the algebraic structure of group Drinfeld twists, and we add some new remarks, giving a complete cha\-rac\-te\-ri\-sa\-tion of `one-legged' group Drinfeld twists: this is a vast class of twists, which includes the ones constructed from reflections.

In the final part of the paper, we characterise when a reflection on a solution $(X,r)$ can be extended to a group reflection on the structure group $G(X,r)$. The same problem was considered in \cite[Example 2.4]{ghobadi2021drinfeld} for more general Drinfeld twists, and was therein left unsolved. We solve it here for the class of reflection-twists. 
\section{Drinfeld twists and representations of the braid group}\label{sec:twists}
The following is a way to twist Yang--Baxter maps into new solutions. The original definition of these twists in $\mathsf{Set}$ is due to Kulish and Mudrov \cite{kulish2000twisting}, but we adopt the definition as rephrased by Ghobadi \cite{ghobadi2021drinfeld}, and we formulate it for a braided object in a general monoidal category.
\begin{definition}[{\cite{kulish2000twisting}}]\label{def:twist}
Let $(V,r)$ be a braided object in $(\Cc,\ot,\One)$. Let $F\colon V^{\ot 2}\to V^{\ot 2}$ and $\Phi,\Psi\colon V^{\ot 3}\to V^{\ot 3}$ be isomorphisms such that
\begin{align}
\label{dt1}\tag{\sc dt1}&F_{12} \Psi = F_{23} \Phi;\\
\label{dt2}\tag{\sc dt2}&\Psi r_{12} = r_{12} \Psi;\\
\label{dt3}\tag{\sc dt3}&\Phi r_{23} = r_{23} \Phi.
\end{align}
Then, $r^F := F r F^{-1}$ is a solution on $V$, with exactly the same proof as \cite[Theorem 2.1]{ghobadi2021drinfeld}: we call it the solution \emph{twisted by $F$}. We call $F$, or more precisely the datum $(F,\Phi, \Psi)$, a \emph{Drinfeld twist} for $r$.
\end{definition}
Observe that $\Phi$ is uniquely determined by $F$ and $\Psi$, thus we shall write a Drinfeld twist $(F,\Phi,\Psi)$ as simply $(F,\Psi)$, implying that $\Phi = F_{23}^{-1}F_{12}\Psi$.

Of course, $F$ may \emph{a priori} be a Drinfeld twist for multiple choices of  $\Psi$, but this is hardly relevant, since the twisted solution $r^F$ only depends on $F$.

\begin{lemma}\label{lem:ghobadi}
Let $(F,\Phi,\Psi)$ be a Drinfeld twist for $r$, and $(G,\psi)$ be a Drinfeld twist for $r^F$. Then:
\begin{enumerate}
\item $(GF, F_{12}^{-1}\psi F_{12}\Psi)$ is a Drinfeld twist for $r$, and one has $r^{GF} = (r^F)^G$;
\item $(F^{-1},\inv{F_{12}}\Psi^{-1} F_{12})$ is a Drinfeld twist for $r^F$, and one has $(r^F)^{\inv{F}} = r$.
\end{enumerate}
\end{lemma}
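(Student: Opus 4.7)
Both parts reduce to a direct verification of the three twist axioms \eqref{dt1}--\eqref{dt3} for the new triples, together with the essentially trivial check of the composition and inversion formulas $r^{GF}=(r^F)^G$ and $(r^F)^{F^{-1}}=r$ on the twisted solutions themselves.

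For part (1), the plan for \eqref{dt1} is to expand $(GF)_{12} F_{12}^{-1}\psi F_{12}\Psi = G_{12}\psi F_{12}\Psi$, apply \eqref{dt1} for $(G,\phi,\psi)$ (a twist of $r^F$) to rewrite $G_{12}\psi = G_{23}\phi$, then \eqref{dt1} for $(F,\Phi,\Psi)$ to rewrite $F_{12}\Psi = F_{23}\Phi$, and reinsert $F_{23}F_{23}^{-1}$ to obtain $(GF)_{23} F_{23}^{-1}\phi F_{23}\Phi$. For \eqref{dt2}, starting from $(F_{12}^{-1}\psi F_{12}\Psi)\,r_{12}$, I would push $r_{12}$ leftward past $\Psi$ by \eqref{dt2} for $\Psi$, then use the intertwining identity $F_{12}r_{12} = r^F_{12}F_{12}$ (immediate from $r^F = FrF^{-1}$) to absorb $F_{12}$ into $r^F$, apply \eqref{dt2} for $(G,\phi,\psi)$ to commute $\psi$ with $r^F_{12}$, and finally use the opposite intertwining $F_{12}^{-1}r^F_{12} = r_{12}F_{12}^{-1}$ to conclude. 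Axiom \eqref{dt3} is completely symmetric, with positions $23$ in place of $12$. The identity $r^{GF}=(r^F)^G$ is the associativity of composition: $(GF)\,r\,(GF)^{-1} = G(FrF^{-1})G^{-1}$.

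For part (2) the verifications follow the same scheme and are even lighter: after cancellations, \eqref{dt1} for the inverse triple amounts to inverting both sides of \eqref{dt1} for $(F,\Phi,\Psi)$, while the intertwining $F_{12}r_{12} = r^F_{12}F_{12}$ (resp.\ $F_{23}r_{23} = r^F_{23}F_{23}$) again makes \eqref{dt2} (resp.\ \eqref{dt3}) automatic. The identity $(r^F)^{F^{-1}} = F^{-1}(FrF^{-1})F = r$ is immediate. I expect the only obstacle to be bookkeeping: one must consistently keep track of whether each axiom is being invoked relative to $r$ or to $r^F$, and of the direction in which the intertwining relations $F_{ij}r_{ij} = r^F_{ij}F_{ij}$ are being used. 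Once this is organised, each verification is a short string of substitutions; together, the two parts express that Drinfeld twists (with specified $\Phi,\Psi$) form a groupoid, with composition and inversion given by the formulas displayed in the statement.
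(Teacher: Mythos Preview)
Your approach---a direct axiom-by-axiom check---is precisely what the paper intends; its own proof is a bare citation of \cite[Theorem~2.2]{ghobadi2021drinfeld}, which carries out the same verification. Your outline for part~(i) is correct as written.

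For part~(ii) there is a bookkeeping snag worth flagging. The ``cancellations'' you anticipate in \eqref{dt1} do not occur with the inverse triple exactly as printed: taking $\Psi' = F_{12}^{-1}\Psi^{-1}F_{12}$ gives $(F^{-1})_{12}\Psi' = F_{12}^{-2}\Psi^{-1}F_{12}$, not $\Psi^{-1}F_{12}^{-1}$, and the intertwining argument for \eqref{dt2}--\eqref{dt3} likewise fails to close. The triple that makes your argument work---and the one forced by requiring the composition rule of part~(i) to return $(\id,\id,\id)$---is
\[
\bigl(F^{-1},\; F_{23}\,\Phi^{-1}F_{23}^{-1},\; F_{12}\,\Psi^{-1}F_{12}^{-1}\bigr).
\]
With the conjugations written this way round, every step you describe goes through verbatim; the displayed formula in the statement appears to be a typographical slip.
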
 
\begin{proof}Same as \cite[Theorem 2.2]{ghobadi2021drinfeld}.
\end{proof}
\begin{remark}
As we may expect, isomorphic solutions on the same object $V$ are Drinfeld-twist related. Indeed, if $(V,r)$ and $(V,s)$ are intertwined by an isomorphism $f\colon V\to V$, with $(f\ot f) r (f^{-1}\ot f^{-1}) = s$, then $F = f\ot f$ is a Drinfeld twist for $r$, with $\Psi = \id\ot \id \ot f$ and $\Phi = f\ot \id \ot \id$. This is an easy verification. For set-theoretic solutions, this was observed in \cite[\S2]{ghobadi2021drinfeld}.
\end{remark}
\begin{convention}\label{conv:extendtwist}When $(V,r) $ and $(W,s)$ are isomorphic solutions on different objects $V, W\in \Obj(\Cc)$, then we may identify $V$ and $W$ by means of the isomorphism. Thus we shall sometimes be sloppy on the terminology, and say that two solutions $(V,r)$ and $(W,s)$ are `Drinfeld-twist related' even when $(W,s)$ is a Drinfeld twist of some solution $(W,t)$ which is isomorphic to $(V,r)$, but $V$ and $W$ are not necessarily equal.\end{convention}

Doikou, Rybo{\l}owicz, and Stefanelli \cite{DoikouQuandlesAsPreLie} gave the following definition.
\begin{definition}[\cite{DoikouQuandlesAsPreLie}]\label{def:Dhom} A \textit{D-homomorphism} between solutions $(V,r)$ and $(W,s)$ is a morphism $F\colon V\ot V\to W\ot W$ such that $ Fr = sF$. This is called a \textit{D-isomorphism} if $F$ is an isomorphism in $\Cc$. \end{definition}
The fact that every solution in $\mathsf{Set}$ is D-isomorphic to its derived solution (and hence to a \textit{rack solution}) was already observed in \cite{DoikouQuandlesAsPreLie}. We are going to refine this result in Theorem \ref{thm:J-is-Drinfeld}.

If we extend the definition of Drinfeld twists to encompass solutions on dif\-fe\-rent objects, as in Convention \ref{conv:extendtwist}, then D-isomorphisms are clearly a weaker notion than Drinfeld twists. Corollary \ref{cor:DisoNotDtwists} will tell us that the notion is \textit{strictly} weaker.


\subsection{Drinfeld twists yield isomorphic representations}\label{sec:representations} We fix a monoidal ca\-te\-go\-ry $(\Cc,\ot, \One)$, which we assume to be strict, and an object $V$ in $\Cc$. If we denote by $V^\ot $ the full subcategory of tensor powers of $V$, then $V^\ot$ is monoidal with the restriction of $\ot$, and with $\One =  V^{\ot 0}$ as the unit.\footnote{If we do not assume the strictness, then $(V\ot V)\ot V$ and $V\ot (V\ot V)$ are possibly distinct albeit isomorphic objects, thus $V^\ot$ is only well-defined up to isomorphism. We can always consider a strictification of a monoidal category, where $V^\ot$ is well-defined.} 

If $r$ is a morphism $V^{\ot 2}\to V^{\ot 2}$, we use the well-established notation $r_{i,i+1}$ for the morphism $r$ applied on the $i$th and the $(i+1)$st leg of a tensor power. Notice that $r_{i, j}$ is generally ill-defined for $j\neq i+1$, unless the category $\Cc$ is braided. 
\begin{definition}
A \textit{representation} of a group $G$ in a category $\Cc$ is a group homomorphism $G\to \Aut(V)$ for some object $V$ in $\Cc$. 
\end{definition}
We recall that two group representations $\rho\colon G\to \Aut(V)$ and $\rho'\colon G\to \Aut(V')$ of the same group $G$ in the same category $\Cc$ are \textit{isomorphic} if there exists an isomorphism $a\colon V'\to V$ such that $\rho'(x) = a^{-1}\rho(x)a $ for all $x\in G$; i.e., if $\rho' = C_a \rho$, where $C_a$ denotes the conjugation by the morphism $a$.

\begin{notation}
We denote by $\mathbb{B}_n$ the braid group on $n$ strands \cite{garside1969braid}, and by $\sigma_{i, i+1}$ its generators, subject to the relations $\sigma_{i,i+1}\sigma_{i+1, i+2}\sigma_{i,i+1} = \sigma_{i+1, i+2}\sigma_{i,i+1}\sigma_{i+1, i+2}$ and $\sigma_{i, i+1}\sigma_{j,j+1} = \sigma_{j,j+1}\sigma_{i,i+1}$ if $|i-j|>1$.
\end{notation}

The following facts are very well known.
\begin{remark}
Having a braiding $r$ on the category $V^\ot $ is the same as having a solution of the \textsc{ybe} $r_{V,V}\colon V^{\ot 2}\to V^{\ot 2}$. Indeed, every braiding $r$ specialises to a solution $r_{V,V}$. Conversely, every solution $r_{V,V}$ can be extended to morphisms $r_{V^{\ot n}, V^{\ot m}}\colon V^{\ot n}\ot V^{\ot m}\to V^{\ot m}\ot V^{\ot n}$ in the obvious way, as depicted in Figure \ref{fig:extension-R}, and this is clearly a braiding on $V^\ot$. 

For the same reasons, let $\rho\colon \mathbb{B}_3\to \Aut(V^{\ot 3})$ be a representation of the braid group on $3$ strands, of the following special form:  $\rho$ sends $\sigma_{12}$ to $r\ot \id$ and $\sigma_{23}$ to $\id\ot r$, for some map $r\colon V^{\ot 2}\to V^{\ot 2}$. Then, this can be extended to a braiding on $V^\ot$, or equivalently to a representation of the braid groups $\mathbb{B}_n$ for all $n$, by setting $\sigma_{i, i+1}\mapsto r_{i, i+1}$. Conversely, every braiding on $V^{\ot}$ clearly induces a representation of $\mathbb{B}_3$ on $V^{\ot 3}$.\end{remark}
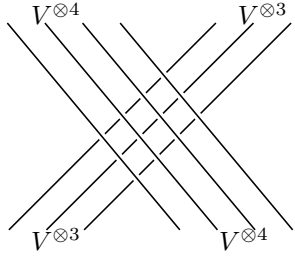
\begin{figure}[t]
\begin{center}
\begin{tikzpicture}[x=0.5cm, y=0.5cm]
\node (a1) at (-4,3) {};
\node (a2) at (-3,3) {};
\node (a3) at (-2,3) {};
\node (a4) at (-1,3) {};
\node (b1) at (2,3) {};
\node (b2) at (3,3) {};
\node (b3) at (4,3) {};
\node (B1) at (-4,-3) {};
\node (B2) at (-3,-3) {};
\node (B3) at (-2,-3) {};
\node (A1) at (1,-3) {};
\node (A2) at (2,-3) {};
\node (A3) at (3,-3) {};
\node (A4) at (4,-3) {};
\draw[draw=white, double=black, very thick] (b1) to (B1);
\draw[draw=white, double=black, very thick] (b2) to (B2);
\draw[draw=white, double=black, very thick] (b3) to (B3);
\draw[draw=white, double=black, very thick] (a1) to (A1);
\draw[draw=white, double=black, very thick] (a2) to (A2);
\draw[draw=white, double=black, very thick] (a3) to (A3);
\draw[draw=white, double=black, very thick] (a4) to (A4);
\node () at (-2.5,3) {$V^{\ot 4}$};
\node () at (2.5,-3) {$V^{\ot 4}$};
\node () at (3,3) {$V^{\ot 3}$};
\node () at (-2.5,-3) {$V^{\ot 3}$};
\end{tikzpicture}
\end{center}
\caption{Extending $r_{V,V}$ (here depicted as a crossing) to a map $r_{V^{\ot n}, V^{\ot m}}$ (in this case $n = 4$ and $m = 3$). Notice that, in the above diagram, only interactions of the form $r_{i, i+1}$ appear: thus the extension is well-defined in every monoidal category. }\label{fig:extension-R}
\end{figure}

\begin{proposition}\label{prop:iso_representations}
Let $(\Cc,\ot, \One)$ be a monoidal category,  $r\colon V^{\ot 2}\to V^{\ot 2}$ be a solution on an object $V$ of $\Cc$, and $F\colon V^{\ot 2}\to V^{\ot 2}$ be an isomorphism. Then the representations of $\mathbb{B}_3$ in $\Cc$
\[ \begin{cases}
\rho\colon \mathbb{B}_3\to \Aut(V^{\ot 3})\\
\sigma_{12}\mapsto r\ot \id\\
\sigma_{23}\mapsto \id\ot r
\end{cases} \qquad \begin{cases}
\rho'\colon \mathbb{B}_3\to \Aut(V^{\ot 3})\\
\sigma_{12}\mapsto FrF^{-1}\ot \id\\
\sigma_{23}\mapsto \id\ot FrF^{-1}
\end{cases}  \]
are isomorphic if and only if $(F, \Psi)$ is a Drinfeld twist for a suitable isomorphism $\Psi\colon V^{\ot 3}\to V^{\ot 3}$.
\end{proposition}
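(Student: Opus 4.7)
The plan is to set up an explicit bijection between intertwiners $a\colon V^{\ot 3}\to V^{\ot 3}$ realising $\rho'\cong \rho$ and pairs $(\Phi,\Psi)$ making $(F,\Phi,\Psi)$ into a Drinfeld twist for $r$. The key observation is that all three axioms can be encoded by a single automorphism of $V^{\ot 3}$, playing simultaneously the role of $a^{-1}$ and of the common value $F_{12}\Psi = F_{23}\Phi$ dictated by \eqref{dt1}.

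First I would unpack the representation-isomorphism condition. Since $\mathbb{B}_3$ is generated by $\sigma_{12}$ and $\sigma_{23}$, asking for $a$ with $\rho'(\sigma)=a^{-1}\rho(\sigma)a$ for all $\sigma\in \mathbb{B}_3$ is the same as asking for an $a\in \Aut(V^{\ot 3})$ satisfying $a\, r^F_{12}=r_{12}\,a$ and $a\, r^F_{23}=r_{23}\,a$. Substituting $r^F_{ij}=F_{ij}r_{ij}F_{ij}^{-1}$, this amounts to demanding that $a F_{12}$ commute with $r_{12}$, and $a F_{23}$ commute with $r_{23}$.

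For the ($\Leftarrow$) direction, given a Drinfeld twist $(F,\Phi,\Psi)$, I would set $a:=(F_{12}\Psi)^{-1}$, which by \eqref{dt1} also equals $(F_{23}\Phi)^{-1}$, and then use \eqref{dt2} to slide $r_{12}$ past $\Psi$ in the identity $F_{12}\Psi r_{12} = F_{12} r_{12}\Psi = r^F_{12}F_{12}\Psi$, obtaining $a^{-1} r_{12}=r^F_{12} a^{-1}$; a symmetric calculation with \eqref{dt3} yields the second intertwining. For the ($\Rightarrow$) direction, given $a$, I would set $\Psi:=F_{12}^{-1}a^{-1}$ and $\Phi:=F_{23}^{-1}a^{-1}$; then \eqref{dt1} holds tautologically, since $F_{12}\Psi = a^{-1} = F_{23}\Phi$, while \eqref{dt2} and \eqref{dt3} follow by taking inverses of the two commutation relations from the reformulation step, using that $\Psi=(aF_{12})^{-1}$ and $\Phi=(aF_{23})^{-1}$ commute with $r_{12}$ and $r_{23}$ respectively.

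There is no real obstacle here: the proof is bookkeeping once one spots the substitution. The only mildly subtle point is that \eqref{dt1} is not an independent constraint in either construction—it is forced by the requirement that $\Psi$ and $\Phi$ be read off from the same intertwiner $a$. One then checks directly that the two constructions are mutually inverse, yielding the claimed bijection and hence the equivalence.
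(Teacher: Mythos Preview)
Your proposal is correct and follows essentially the same approach as the paper: both arguments identify the intertwiner with (the inverse of) the common value $F_{12}\Psi=F_{23}\Phi$, and read off $\Psi$ and $\Phi$ from the intertwiner in the converse direction. Your writeup is slightly more detailed---in particular, you make explicit the bijection between intertwiners and pairs $(\Phi,\Psi)$, which the paper records only as a remark after its proof.
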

\begin{proof}
If $F$ is a Drinfeld twist for a suitable map $\Psi$, one easily checks that $\rho'= C_a\rho$ for $a = F_{12}\Psi$. Conversely, if $\rho' = C_a \rho$, set $\Psi = F_{12}^{-1}a$: then $(F,\Psi)$ is immediately seen to be a Drinfeld twist.
\end{proof}
As a consequence of the previous proof, the set of isomorphisms $\Psi$ that make $F$ into a Drinfeld twist is in bijection with the set of isomorphisms between the representations $\rho$ and $\rho'$.

\section{Reflections and Drinfeld twists for braided sets}
In the case $\Cc = \mathsf{Set}$, a solution $(X,r)$ is also called a \emph{braided set} in the literature, as well as here. We write
\[ r(a,b) =  (\sigma_a(b), \rho^{}_b(a)) = (a\rightharpoonup b, a\leftharpoonup b),\]
and we shall regularly switch between the two notations, according to what comes most in handy. In terms of the maps $\rightharpoonup$ and $\leftharpoonup$ (we hereafter say `in components'), the \textsc{ybe} reads:
	\begin{align}
		 &\label{ybe1}\tag{\sc ybe1}(a\rightharpoonup b )\rightharpoonup \left((a\leftharpoonup b)\rightharpoonup c\right) = a\rightharpoonup (b\rightharpoonup c);\\
		&\label{ybe2}\tag{\sc ybe2}(a\rightharpoonup b )\leftharpoonup \left( (a\leftharpoonup b)\rightharpoonup c\right) = \left( a\leftharpoonup (b\rightharpoonup c)\right)\rightharpoonup (b\leftharpoonup c);\\
		&\label{ybe3}\tag{\sc ybe3}(a\leftharpoonup b)\leftharpoonup c = \left( a\leftharpoonup (b\rightharpoonup c)\right) \leftharpoonup (b\leftharpoonup c).
	\end{align}
We denote by $M(X,r)$ and $G(X,r)$ the structure monoid and the structure group of $r$, respectively; see \cite{etingof1999set}. Given a map $k\colon X\to X$, we say that it is a (\textit{right}) \emph{$r$-reflection} (see \cite{lebed2022reflection}) if it satisfies the reflection equation \eqref{re} with respect to $r$. The \textsc{re} in components reads:
\begin{align}
&\label{re1}\tag{\sc re1}
(a\rightharpoonup b)\rightharpoonup k(a\leftharpoonup b) = (a\rightharpoonup k(b))\rightharpoonup k(a\leftharpoonup k(b));\\
&\label{re2}\tag{\sc re2} k\big((a\rightharpoonup b)\leftharpoonup k(a\leftharpoonup b) \big) = (a\rightharpoonup k(b))\leftharpoonup k(a\leftharpoonup k(b)).
\end{align}
\begin{notation}
For a non-degenerate solution $r\colon X\times X\to X\times X$, we denote $\inv{(a\rightharpoonup\blank)}$ simply by $\inv{a}\rightharpoonup\blank$, and we use an analogous notation for $\leftharpoonup$. Observe that this notation is consistent when $r$ is a braiding on a group (see Definition \ref{def:br_group}).
\end{notation}
Let $J^{k;n}$ be the \emph{generalised guitar map for $k$}, or \emph{$k$-guitar map}; see \cite{lebed2022reflection}. One has $\refl{r}{k}= Jr \inv{J}$, where $J:= J^k:= J^{k;2}$ is defined by $(a,b)\mapsto (a\leftharpoonup k(b), b)$. The \emph{$k$-twisted} or \emph{$k$-derived solution} $\refl{r}{k}$ takes the following form:
\begin{align*}\refl{r}{k}(a,b)&= (a\stackrel{(k)}{\rightharpoonup} b, a\stackrel{(k)}{\leftharpoonup}b) = (\refl{\sigma}{k}_a(b), \refl{\rho}{k}_b(a)) \\
&= \Big( \big((a\leftharpoonup\inv{k(b)})\rightharpoonup b\big)\leftharpoonup k\big((a\leftharpoonup\inv{k(b)})\leftharpoonup b\big),\; (a\leftharpoonup\inv{k(b)})\leftharpoonup b \Big)\\
& = \Big( \rho^{}_{k\rho^{}_b\rho^{-1}_{k(b)}(a)}\lambda_{\rho^{-1}_{k(b)}(a)}(b),\;  \rho^{}_b\rho^{-1}_{k(b)}(a)\Big).
\end{align*}
The twisted solution is also a Yang--Baxter map on $X$; see \cite{lebed2022reflection}. 
The operation of $k$-twisting generalises the (\textit{right}) \textit{derived solution} \[r' (a,b) := \big(( (a\leftharpoonup b^{-1})\rightharpoonup b)\leftharpoonup a, a\big),\] which is indeed a $k$-twist for the reflection $k = \id$.

\subsection{Reflections yield Drinfeld twists} We now prove that the $k$-guitar map $J:= J^{k;2}$ for a reflection $k$ is a Drinfeld twist, and hence $\refl{r}{k} = JrJ^{-1}$ is a Drinfeld-twisted solution.
\begin{theorem}\label{thm:J-is-Drinfeld}
Let $k$ be a reflection for $r$, with $r$ non-degenerate. Then, the associated guitar map $J = J^{k;2}$ is a set-theoretic Drinfeld twist.
\end{theorem}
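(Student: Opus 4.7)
The plan is to apply Proposition \ref{prop:iso_representations}: it then suffices to exhibit a bijection $a \colon X^3 \to X^3$ that intertwines the two $\mathbb{B}_3$-representations on $X^3$ determined by $r$ and by $\refl{r}{k} = Jr\inv{J}$, meaning $a\, r_{i,i+1} = \refl{r}{k}_{i,i+1}\, a$ for $i = 1,2$.

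My candidate is $a := J^{k;3}$, the three-strand generalised guitar map of Lebed and Vendramin \cite{lebed2022reflection}. The construction of $J^{k;n}$ in \emph{loc.\,cit.} is tailored precisely to provide an isomorphism of $\mathbb{B}_n$-representations on $X^n$ between the $r$-action and the $\refl{r}{k}$-action; specialising to $n=3$ supplies the required intertwining on both strand pairs.

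Given this, I set $\Psi := \inv{J_{12}}\, J^{k;3}$ and $\Phi := \inv{J_{23}}\, J^{k;3}$. Axiom \eqref{dt1} is then immediate: $F_{12}\Psi = J^{k;3} = F_{23}\Phi$. For \eqref{dt2}, using $\refl{r}{k}_{12} = J_{12}\, r_{12}\, \inv{J_{12}}$ together with the intertwining $J^{k;3} r_{12} = \refl{r}{k}_{12}\, J^{k;3}$, one computes
\[ \Psi r_{12} = \inv{J_{12}}\, J^{k;3} r_{12} = \inv{J_{12}}\, \refl{r}{k}_{12}\, J^{k;3} = r_{12}\, \inv{J_{12}}\, J^{k;3} = r_{12}\Psi. \]
Axiom \eqref{dt3} follows in the same manner with legs $2,3$ in place of $1,2$. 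Bijectivity of $\Phi$ and $\Psi$ is inherited from that of $J^{k;3}$ and $J$.

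The main obstacle is the three-strand intertwining property of $J^{k;3}$: should it not appear in exactly this form in \cite{lebed2022reflection}, I would derive it directly from the inductive definition of $J^{k;n}$, using \eqref{re} together with the three component equations of \eqref{ybe}. Morally, this is the very same computation that establishes $\refl{r}{k}$ as a solution of \eqref{ybe}, repackaged as a statement about braid-group intertwiners rather than as a verification of the braid relation.
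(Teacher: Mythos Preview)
Your proof is correct and lands on exactly the same $\Phi$ and $\Psi$ as the paper's (the Remark immediately after the theorem confirms that $F_{12}\Psi = F_{23}\Phi = J^{k;3}$). The only difference is packaging: the paper writes $\Phi$ and $\Psi$ as explicit closed formulas in components and verifies \eqref{dt1}--\eqref{dt3} by a direct computation, whereas you identify $J^{k;3}$ up front as the $\mathbb{B}_3$-intertwiner from \cite{lebed2022reflection} and read off the axioms through Proposition~\ref{prop:iso_representations}. Your route is more conceptual and citable, the paper's is self-contained; the underlying verification---your displayed chain for \eqref{dt2} is precisely the paper's computation, rearranged---is the same.
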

\begin{proof}
Recall that $J(a,b) = (a\leftharpoonup k(b),b)$. Define
\begin{align*}
\Phi(a,b,c)&=\Bigg( \bigg(a\leftharpoonup \Big( (b\rightharpoonup k(c))\rightharpoonup k(b\leftharpoonup k(c))\Big)\bigg)\Bigg. \\ &\hspace{2em} \Bigg.\leftharpoonup \big( (b\rightharpoonup k(c))\leftharpoonup k(b\leftharpoonup k(c)) \big),\; b,\; c\Bigg)\\
\overset{\eqref{ybe3}}&{=} \Bigg( \big( a\leftharpoonup (b\rightharpoonup k(c))\big)\leftharpoonup k(b\leftharpoonup k(c)) ,\; b,\; c\Bigg),\end{align*}and
\begin{align*}
\Psi(a,b,c)&= \Big( a\leftharpoonup(b\rightharpoonup k(c)),\; b\leftharpoonup k(c),\; c \Big).
\end{align*}
Since $r$ is non-degenerate, it is immediate to prove that $J$, $\Phi$ and  $\Psi$ are bijective. 
It is an easy computation to verify \eqref{dt1}, \eqref{dt2}, and \eqref{dt3}.
\end{proof}
\begin{remark}
Observe that both sides of \eqref{dt1} equal the map $J^{k;3}$ defined in \cite{lebed2022reflection}.
\end{remark}
\begin{remark}
Theorem \ref{thm:J-is-Drinfeld} was stated for \textit{right} reflections. With a dual proof, one can show that \textit{left} reflections also induce Drinfeld twists.
\end{remark}
Since Drinfeld twists can be composed and inverted, it is natural to investigate compositions and inverses of reflections. 
\begin{lemma}\label{lem:explicit}
Let $r$ be right non-degenerate $r(a,b) = (\sigma_a(b), \rho^{}_b(a))$, let $k$ be a reflection for $r$, and let $h$ be a reflection for $\refl{r}{k}$. Then, the map $\refl{\left( \refl{r}{k}\right)}{h}$ acts as follows:
\[ (a, b)\mapsto (u,v)  \]
where 
\begin{align*}
u&=\rho^{}_{h \rho^{}_b R_b^{-1} (a) }\left( \rho^{}_{kh\rho^{}_b R_b^{-1}(a)}\right)^{-1} \rho^{}_{k\rho^{}_b R_b^{-1}(a)}\lambda_{R_b^{-1}(a)}(b) ,\\
v&=\rho^{}_b R_b^{-1}(a),\\
R_b&=  \rho^{}_{h(b)}\inv{\rho^{}_{kh(b)}}\rho^{}_{k(b)}.
\end{align*}
\end{lemma}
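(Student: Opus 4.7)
The plan is to compute $\refl{(\refl{r}{k})}{h}$ by applying the explicit formula for the $k$-twisted solution (recalled in the excerpt) twice in succession. Write $s:=\refl{r}{k}$ and denote its components by $s(a,b) = (\tilde{\sigma}_a(b), \tilde{\rho}_b(a))$. From the displayed formula for $\refl{r}{k}$ in the excerpt, one reads off
\[
\tilde{\rho}_b(a) = \rho_b\,\rho^{-1}_{k(b)}(a), \qquad \tilde{\sigma}_a(b) = \rho_{k\tilde{\rho}_b(a)}\,\lambda_{\rho^{-1}_{k(b)}(a)}(b).
\]
Since $r$ is right non-degenerate, each $\rho_b$ is bijective, hence so is each $\tilde{\rho}_b$; in particular, $\tilde{\rho}^{-1}_{h(b)}(a) = \rho_{kh(b)}\,\rho^{-1}_{h(b)}(a)$. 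This is the only way right non-degeneracy enters the argument.

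Now I apply the same formula to $s$ with reflection $h$: the second component of $\refl{s}{h}(a,b)$ is $\tilde{\rho}_b\,\tilde{\rho}^{-1}_{h(b)}(a)$. Expanding using the expressions above gives
\[
\tilde{\rho}_b\,\tilde{\rho}^{-1}_{h(b)}(a) \;=\; \rho_b\,\rho^{-1}_{k(b)}\rho_{kh(b)}\,\rho^{-1}_{h(b)}(a).
\]
Setting $R_b := \rho_{h(b)}\,\rho^{-1}_{kh(b)}\,\rho_{k(b)}$, so that $R_b^{-1} = \rho^{-1}_{k(b)}\rho_{kh(b)}\rho^{-1}_{h(b)}$, this is exactly $\rho_b R_b^{-1}(a) = v$. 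This gives the claimed expression for the second coordinate and, crucially, identifies $v = \tilde{\rho}_b\,\tilde{\rho}^{-1}_{h(b)}(a)$, which will allow me to reuse this quantity in the first-coordinate computation.

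For the first component, by the same formula,
\[
\bigl(\refl{s}{h}(a,b)\bigr)_1 \;=\; \tilde{\rho}_{h(v)}\,\tilde{\lambda}_{c}(b), \qquad \text{where } c := \tilde{\rho}^{-1}_{h(b)}(a).
\]
Two substitutions then finish everything. First, $\rho^{-1}_{k(b)}(c) = \rho^{-1}_{k(b)}\rho_{kh(b)}\rho^{-1}_{h(b)}(a) = R_b^{-1}(a)$; second, $\rho_b\,\rho^{-1}_{k(b)}(c) = \tilde{\rho}_b(c) = v$. Plugging these into the formula for $\tilde{\lambda}_c = \tilde{\sigma}_c$ yields
\[
\tilde{\lambda}_c(b) = \rho_{k(v)}\,\lambda_{R_b^{-1}(a)}(b),
\]
and then applying $\tilde{\rho}_{h(v)}(\cdot) = \rho_{h(v)}\,\rho^{-1}_{kh(v)}(\cdot)$ on the outside delivers
\[
u = \rho_{h(v)}\,\rho^{-1}_{kh(v)}\,\rho_{k(v)}\,\lambda_{R_b^{-1}(a)}(b),
\]
which, after re-expressing $v = \rho_b R_b^{-1}(a)$, is precisely the claimed formula.

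There is no real obstacle here: the entire proof is an unwinding of the explicit formula for $\refl{r}{k}$ applied twice. The only piece requiring attention is bookkeeping—in particular recognising the composite $\rho^{-1}_{k(b)}\rho_{kh(b)}\rho^{-1}_{h(b)}$ as $R_b^{-1}$, and observing that the combination $\tilde{\rho}_b\tilde{\rho}^{-1}_{h(b)}(a)$ appearing in the second coordinate is exactly what reappears as $v$ inside the first coordinate, which is what makes the final expression take the clean form in the statement.
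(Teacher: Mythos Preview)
Your proof is correct and follows essentially the same approach as the paper: both arguments unwind the explicit formula for the $k$-derived solution twice, identifying $\tilde{\rho}_b = \rho_b\rho_{k(b)}^{-1}$ and then $\tilde{\rho}_b\tilde{\rho}_{h(b)}^{-1}$, and both recognise that the quantity $v=\tilde{\rho}_b\tilde{\rho}_{h(b)}^{-1}(a)$ reappears inside the first component. The only cosmetic difference is that the paper introduces an auxiliary variable $\alpha$ with $a=\tilde{\rho}_{h(b)}(\alpha)$ and back-substitutes at the end, whereas you carry $a$ through directly.
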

\begin{proof}
By definition of $\refl{r}{k}$, one has $\refl{\refl{\rho^{}_b}{k}}{h} =\rho^{}_b  \inv{\rho^{}_{k(b)}} \inv{\rho^{}_{kh(b)}} \rho^{}_{h(b)}$. Thus $\refl{\left( \refl{r}{k}\right)}{h}$ maps $\left(\rho^{}_{h(b)}\inv{\rho^{}_{kh(b)}}(\alpha),\; b\right)$ into $\left( \alpha' , \; \rho^{}_{h(\alpha')}\inv{\rho^{}_{kh(\alpha')}}(b') \right)$, where
$$\alpha' = \rho^{}_{b}\inv{\rho^{}_{k(b)}}(\alpha),\quad b'= \rho^{}_{k(\alpha')}\lambda_{\inv{\rho^{}_{k(b)}}(\alpha)}(b).$$
The conclusion follows by substituting $ a: =\rho^{}_{h(b)}\inv{\rho^{}_{kh(b)}}(\alpha) $. This substitution is bijective because $r$ is right non-degenerate.
\end{proof}
After contemplating the expression in Lemma \ref{lem:explicit}, the following is immediate.
\begin{corollary}\label{cor:composition}
Suppose that $\ell$ is a reflection for $r$, where $r\colon (a,b) \mapsto (\sigma_a(b), \rho^{}_b(a))$ is right non-degenerate. Then, $\refl{\left( \refl{r}{k}\right)}{h}=\refl{r}{\ell}$ holds if and only if the following holds for all $b$:
\begin{equation}\label{eq:composition_sufficient_necessary_condition}\rho^{}_{\ell(b)} = \rho^{}_{h(b)}\inv{\rho^{}_{hk(b)}} \rho^{}_{k(b)}.\end{equation}
In particular, $\refl{(\refl{r}{k})}{h}= r$ if and only if $\rho^{}_{h(b)}\inv{\rho^{}_{hk(b)}} \rho^{}_{k(b)} = \id $ holds for all $b$. \end{corollary}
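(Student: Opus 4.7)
The plan is to compare the explicit expression $\refl{(\refl{r}{k})}{h}(a,b) = (u,v)$ supplied by Lemma \ref{lem:explicit} with the analogous formula for $\refl{r}{\ell}(a,b)$ obtained by specialising the displayed formula for $\refl{r}{k}$ preceding Theorem \ref{thm:J-is-Drinfeld} to $\ell$ in place of $k$:
\[ \refl{r}{\ell}(a,b) = \bigl(\rho_{\ell\rho_b\inv{\rho_{\ell(b)}}(a)}\lambda_{\inv{\rho_{\ell(b)}}(a)}(b),\; \rho_b\inv{\rho_{\ell(b)}}(a)\bigr). \]

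First I would equate the second components: the identity $\rho_b\inv{R_b}(a) = \rho_b\inv{\rho_{\ell(b)}}(a)$, required for every $a, b\in X$, collapses by right non-degeneracy (each $\rho_b$ is a bijection) to $R_b = \rho_{\ell(b)}$ for every $b$, which is precisely \eqref{eq:composition_sufficient_necessary_condition}. Conversely, under \eqref{eq:composition_sufficient_necessary_condition} the second components agree; I then check the first components via the substitution $c := \rho_b\inv{\rho_{\ell(b)}}(a)$, which, as $a$ varies over $X$, ranges over all of $X$. The operator $R_c$ sitting inside the expression for $u$ coincides by assumption with $\rho_{\ell(c)}$, matching the operator $\rho_{\ell\rho_b\inv{\rho_{\ell(b)}}(a)}$ in $\refl{r}{\ell}(a,b)$, so the two first components agree as well.

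For the ``in particular'' statement, setting $\ell = \id$ would be \emph{wrong}, since $\refl{r}{\id}$ is the derived solution $r'$, generally distinct from $r$. Instead, I would compare $\refl{(\refl{r}{k})}{h}(a,b)$ directly with $r(a,b) = (\lambda_a(b), \rho_b(a))$: equality of second components forces $R_b = \id$ for every $b$, which is the stated condition; applying the same identity at $\rho_b(a)$ in place of $b$ then reduces the first component $u$ to $\lambda_a(b)$, as required.

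The only real obstacle is careful bookkeeping of indices in the substitution of the middle step; no fresh Yang--Baxter manipulation is needed beyond what has been absorbed into Lemma \ref{lem:explicit}. This is why the statement is ``immediate''.
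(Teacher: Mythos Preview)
Your proposal is correct and is precisely the argument the paper has in mind: the paper's proof is the single line ``immediate from Lemma~\ref{lem:explicit}'', and your comparison of second components (forcing $R_b=\rho_{\ell(b)}$) followed by the observation that $u=R_v\,\lambda_{R_b^{-1}(a)}(b)$ then matches the first component of $\refl{r}{\ell}$ is exactly that contemplation made explicit. Your treatment of the ``in particular'' clause---comparing directly with $r$ rather than attempting to set $\ell=\id$---is also the right reading; the remark that $c$ ranges over all of $X$ is harmless but unnecessary, since you only need the hypothesis $R_b=\rho_{\ell(b)}$ at the single point $b=c$.
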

In Corollary \ref{cor:composition}, condition \eqref{eq:composition_sufficient_necessary_condition} does not ensure that $\ell$ is a reflection. We shall indeed present co\-un\-ter\-ex\-amples in Example \ref{ex:ell_not_refl}.  Thus we leave the following (seemingly hard) question open.
\begin{question}\label{quest:composing_guitar_maps} By Theorem \ref{thm:J-is-Drinfeld} and Lemma \ref{lem:ghobadi}, compositions and inverses of guitar maps are Drinfeld twists. What compositions and what inverses of guitar maps are still guitar maps for some reflection?\end{question}

In the proof of Theorem \ref{thm:J-is-Drinfeld}, we crucially used the fact that $r$ can be written in components $\rightharpoonup,\leftharpoonup$, which is a property of $\mathsf{Set}$ that cannot be generalised to any monoidal category. The existence of the two components descends from the fact that $\mathsf{Set}$ is cartesian; i.e., that the tensor product $\times$ in $\mathsf{Set}$ also enjoys the universal property of a product.
\begin{remark}
	Let $(V,r)$ be a solution in a cartesian monoidal category $(\Cc, \times, \One)$. Let $k$ be a reflection for $r$. Then $r\colon X\times X\to X\times X$ is uniquely determined by maps $\rightharpoonup,\leftharpoonup\colon X\times X\to X$ by the universal property of the product. The notions of left and right non-degeneracy are well defined, and it is easy to verify that Theorem \ref{thm:J-is-Drinfeld} holds \textit{verbatim} with the same proof.
\end{remark}
\subsection{Permutation solutions}
%
Let $\mathfrak{S}_X$ be the set of permutations of $X$. For $\lambda,\rho$ two commuting permutations in $\mathfrak{S}_X$, it is known that $r\colon (a,b)\mapsto (\lambda(b),\rho(a))$ is a solution, called a \textit{permutation solution}.
\begin{lemma}\label{lem:perm}
Let $r\colon (a,b)\mapsto (\lambda(b),\rho(a))$ be a permutation solution. Then its class modulo Drinfeld twists depends exclusively on the product $\rho\lambda$.
\end{lemma}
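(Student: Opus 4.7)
The plan is to reduce every permutation solution to a canonical form that depends only on the product $\mu := \rho\lambda = \lambda\rho$, and then invoke Lemma~\ref{lem:ghobadi}. Specifically, I will show that the solution $r$ with components $(\lambda,\rho)$ is Drinfeld-twist related to the ``canonical'' permutation solution $\bar r(a,b) := (b, \mu(a))$, i.e.\ the one with $\lambda'=\id$ and $\rho'=\mu$. Once this is established, two permutation solutions with the same product $\mu$ are both twists of the same $\bar r$, hence twists of one another by Lemma~\ref{lem:ghobadi}.

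To exhibit such a twist I propose $F(a,b) := (a, \lambda(b))$. A direct computation gives $F r F^{-1}(a,b) = (b, \lambda\rho(a)) = \bar r(a,b)$, so the twisted solution is indeed the desired canonical form. To promote $F$ to a bona fide Drinfeld twist in the sense of Definition~\ref{def:twist}, I take
\[\Psi(a,b,c) := (a,\,b,\,\lambda^{2}(c)), \qquad \Phi(a,b,c) := (a,\,\lambda(b),\,\lambda(c)).\]
All three axioms are then routine: \eqref{dt1} reads $F_{12}\Psi = F_{23}\Phi$, and both sides send $(a,b,c)$ to $(a,\lambda(b),\lambda^{2}(c))$; \eqref{dt2} follows because $\Psi$ acts only on the third tensor factor whereas $r_{12}$ acts on the first two; and \eqref{dt3} is the only axiom where the hypothesis $\rho\lambda = \lambda\rho$ is actually used, as it reduces to the comparison $\lambda\rho(b) = \rho\lambda(b)$ in the third coordinate.

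Finally, given $(\lambda_1,\rho_1)$ and $(\lambda_2,\rho_2)$ with $\rho_1\lambda_1 = \rho_2\lambda_2 = \mu$, let $F_1$ and $F_2$ be the corresponding twists (with their companion data) constructed as above. Then $r_1$ twists to $\bar r$ via $F_1$, and $r_2$ twists to $\bar r$ via $F_2$, so by Lemma~\ref{lem:ghobadi}(2) the map $F_2^{-1}$ is a Drinfeld twist from $\bar r$ back to $r_2$, and by Lemma~\ref{lem:ghobadi}(1) the composite $F_2^{-1} F_1$ is a Drinfeld twist from $r_1$ to $r_2$. The only nonroutine step in the whole argument is guessing the twist $F = \id\times\lambda$ and its companions $(\Phi,\Psi)$; once they are written down, every verification is straightforward bookkeeping, with commutativity of $\rho$ and $\lambda$ localised in the \eqref{dt3} check.
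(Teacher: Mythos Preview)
Your argument is correct. The only observation is that your route differs mildly from the paper's: instead of constructing the twist $F=\id\times\lambda$ by hand and checking \eqref{dt1}--\eqref{dt3} directly, the paper simply invokes Theorem~\ref{thm:J-is-Drinfeld} with the reflection $k=\id$, so that the guitar map $J(a,b)=(a\leftharpoonup b,b)=(\rho(a),b)$ is automatically a Drinfeld twist, and then computes the derived solution $r'(a,b)=(\rho\lambda(b),a)$ as the canonical form. Your canonical form $\bar r(a,b)=(b,\rho\lambda(a))$ is the mirror image of the paper's, and your twist acts on the second leg where the paper's guitar map acts on the first. The paper's version is shorter because it recycles machinery already in place; yours is more self-contained and makes explicit where the commutation $\lambda\rho=\rho\lambda$ enters (namely in \eqref{dt3}), which is a nice feature.
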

\begin{proof}
Clearly $r$ is a Drinfeld twist of the derived solution $r'$. It suffices to observe that $r'(a,b) = (\rho\lambda(b), a)$.
\end{proof}
\begin{remark}
Let $r\colon (a,b)\mapsto (\lambda(b), a)$ be a permutation solution. Then $k$ is a reflection if and only if $k\lambda = \lambda k$: this is immediate from \eqref{re1} and \eqref{re2}. Observe that such a map $k$ always exists (e.g.\@ $k =\id$). The corresponding Drinfeld twist is $F(a,b) = (a, \lambda(b))$, and the twisted solution is $\refl{r}{k}  = r'\colon (a,b)\mapsto (a, \lambda(b))$.

Dually, every permutation solution $r\colon (a,b)\mapsto (b, \rho(a))$ can be twisted into $\refl{r}{k}\colon (a,b)\mapsto (\rho(a), b)$ by means of a \textit{left reflection} $k$, satisfying $k\rho = \rho k$.
\end{remark}

Let $r\colon (a,b)\mapsto (\lambda(b), a)$ be a permutation solution on $X$. For $p,q\in \mathfrak{S}_X$, the map $F\colon (a,b)\mapsto (p(a), q(b))$ is bijective, and 
\[ FrF^{-1}\colon (a,b)\mapsto (p\lambda q^{-1}(b), qp^{-1}(a)) \]
is a solution if and only if $p\lambda p^{-1} = q\lambda q^{-1}$. In that case, $F$ is obviously a $D$-isomorphism, and we wonder whether it is also a Drinfeld twist.
\begin{lemma}\label{lem:permutation-Dtwist}
	In the above hypotheses, $F$ is a Drinfeld twist if and only if $\lambda = \id$.
\end{lemma}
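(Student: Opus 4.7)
The plan is to apply Proposition \ref{prop:iso_representations}, which reduces the question to whether the two $\mathbb{B}_{3}$-representations on $X^{3}$ induced by $r$ and $r^{F}$ are conjugate; equivalently, whether there exists a bijection $a\colon X^{3}\to X^{3}$ with $ar_{12} = r^{F}_{12}a$ and $ar_{23} = r^{F}_{23}a$, in which case $\Psi := F_{12}^{-1}a$ and $\Phi := F_{23}^{-1}a$ assemble into Drinfeld-twist data.

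\smallskip

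\emph{Sufficiency.} Assume $\lambda = \id$. Then $r$ is the flip and $r^{F}(a,b) = (pq^{-1}(b), qp^{-1}(a))$, so both braid-group actions on $X^{3}$ are involutive and factor through $\mathfrak{S}_{3}$. Setting $\pi := pq^{-1}$, I would exhibit the coordinate-wise intertwiner
\[ a(x,y,z) := \bigl(x,\; \pi(y),\; \pi^{2}(z)\bigr),\]
check by direct substitution that $ar_{12} = r^{F}_{12}a$ and $ar_{23} = r^{F}_{23}a$, and extract the twist data from $a$ via $\Psi = F_{12}^{-1}a$, $\Phi = F_{23}^{-1}a$.

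\smallskip

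\emph{Necessity.} Suppose $(F,\Phi,\Psi)$ is a Drinfeld twist. Writing $\Psi=(\Psi_{1},\Psi_{2},\Psi_{3})$ componentwise, condition (dt2) yields
\[\Psi_{1}(\lambda y, x, z)=\lambda\Psi_{2}(x,y,z),\quad \Psi_{2}(\lambda y, x, z)=\Psi_{1}(x,y,z),\quad \Psi_{3}(\lambda y, x, z)=\Psi_{3}(x,y,z),\]
and (dt3) yields analogous relations for $\Phi$. Condition (dt1) reads, coordinatewise, $\Phi_{1}=p\Psi_{1}$, $p\Phi_{2}=q\Psi_{2}$, $q\Phi_{3}=\Psi_{3}$. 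Eliminating $\Phi$ using these and combining the resulting relations (and exploiting $p\lambda p^{-1}=q\lambda q^{-1}$, which gives $p^{-1}q\in C_\lambda$), one derives the functional equations
\[ \Psi_{2}(\lambda x, \lambda y, z) = \lambda\Psi_{2}(x,y,z) \quad\text{and}\quad \Psi_{2}(x, \lambda y, \lambda z) = \lambda\Psi_{2}(x,y,z). \]
From these I would then argue that, together with the bijectivity of $\Psi$ on $X^{3}$ and the simultaneous constraints on $\Psi_{1}$ and $\Psi_{3}$ (linked by (**) and (***) above), the only $\lambda\in\mathfrak{S}_{X}$ compatible with such a $\Psi$ is the identity.

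\smallskip

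\emph{Main obstacle.} The core difficulty is the necessity direction: the pair of functional equations on $\Psi_{2}$ in isolation admits plenty of non-trivial solutions for $\lambda\neq\id$ (e.g.\@ shift maps on $X=\mathbb{Z}$), so the argument cannot rely on $\Psi_{2}$ alone. The obstruction must come from the three components acting together as a single bijection of $X^{3}$ — that is, from how (dt1) couples the relations governing $\Psi_{2}$ to those governing $\Psi_{1}$ and $\Psi_{3}$, presumably through an orbit-size or cardinality argument along the $\langle r_{12}\rangle$-orbits of $X^{3}$ that collapses only when $\lambda$ acts trivially.
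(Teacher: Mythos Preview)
Your sufficiency intertwiner does not work: with $\pi=pq^{-1}$ and $a(x,y,z)=(x,\pi(y),\pi^{2}(z))$ one computes $ar_{12}(x,y,z)=(y,\pi(x),\pi^{2}(z))$ but $r^{F}_{12}a(x,y,z)=(\pi^{2}(y),\pi^{-1}(x),\pi^{2}(z))$, and these agree only when $\pi^{2}=\id$. A correct choice is $\Psi(x,y,z)=(x,y,qp^{-1}q(z))$, equivalently $a(x,y,z)=(p(x),q(y),qp^{-1}q(z))$; a direct check shows this intertwines both $r_{12},r_{23}$ with $r^{F}_{12},r^{F}_{23}$.

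For the necessity direction you have the right setup but you are missing the decisive step, and the resolution is simpler than the orbit-count you anticipate. After writing $\Psi_{2}$ in terms of $\Psi_{1}$ via your relation~$(\ast)$, do the same for $\Psi_{3}$: from $q\Phi_{3}=\Psi_{3}$ together with the third $\Phi$-relation one obtains
\[
\Psi_{3}(a,b,c)=q\lambda^{-1}\Psi_{1}\bigl(\lambda p^{-1}q(c),\,a,\,q^{-1}\lambda^{-1}p(b)\bigr).
\]
Now substitute both expressions back into the remaining unused relations~$(\ast\ast)$ and~$(\ast\ast\ast)$. Combining the two resulting identities (this is where the paper's argument differs from yours: it works with $\Psi_{1}$ alone rather than $\Psi_{2}$, and uses the $\Psi_{3}$-relation, not just the $\Psi_{1}$--$\Psi_{2}$ interplay) yields
\[
\lambda\,\Psi_{1}(u,v,w)=\Psi_{1}(u,v,w)\qquad\text{for all }(u,v,w)\in X^{3},
\]
i.e.\ $\im(\Psi_{1})\subseteq\mathrm{Fix}(\lambda)$. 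But $\Psi\colon X^{3}\to X^{3}$ is a bijection, so its first component $\Psi_{1}$ is surjective onto $X$; hence $\mathrm{Fix}(\lambda)=X$ and $\lambda=\id$. Your two equivariance equations for $\Psi_{2}$ are correct but, as you observed, not enough on their own; the point is that after eliminating $\Psi_{2}$ and $\Psi_{3}$ the remaining constraints force $\Psi_{1}$ to land in the fixed set of $\lambda$, and surjectivity of a single coordinate of a bijection of $X^{3}$ does the rest.
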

\begin{proof}
	We search for bijective maps $\Phi,\Psi\colon X^3\to X^3$ that make $F$ into a Drinfeld twist. Since \eqref{dt1} implies $\Phi = F_{12}\Psi F_{23}^{-1}$, it suffices to find a bijective 
	\[  \Psi\colon (a,b,c)\mapsto (\Psi_1(a,b,c), \Psi_2(a,b,c), \Psi_3(a,b,c))\] such that
	\begin{align}
		\label{dt2.1} &\Psi_1(\lambda(b), a, c) = \lambda \Psi_2(a,b,c),\\
		\label{dt2.2} &\Psi_2(\lambda(b), a, c) = \Psi_1(a,b,c),\\
		\label{dt2.3}& \Psi_3(\lambda(b), a,c) = \Psi_3(a,b,c),\\
		\label{dt3.1} & p\Psi_1(a, p^{-1}\lambda(c), q^{-1}(b)) = p\Psi_1(a, p^{-1}(b), q^{-1}(c)),\\
		\label{dt3.2} &q\Psi_2(a, p^{-1}\lambda(c), q^{-1}(b)) = \lambda \Psi_3(a, p^{-1}(b), q^{-1}(c)),\\
		\label{dt3.3} &\Psi_3(a, p^{-1}\lambda(c), q^{-1}(b)) = q\Psi_2(a, p^{-1}(b), q^{-1}(c)),
	\end{align}
	where \eqref{dt2.1}--\eqref{dt2.3} translate \eqref{dt2}, and \eqref{dt3.1}--\eqref{dt3.3} translate \eqref{dt3}. From \eqref{dt2.1} and \eqref{dt3.3} respectively, we have
	\begin{align*}
		\Psi_2(a,b,c) &= \lambda^{-1}\Psi_1(\lambda(b), a, c),\\
		\Psi_3(a,b,c) &= q\Psi_2(a, p^{-1}q(c), q^{-1}\lambda^{-1}p(b))\\
		&= q\lambda^{-1}\Psi_1(\lambda p^{-1}q(c), a, q^{-1}\lambda^{-1}p(b)).
	\end{align*}
	Substituting these expressions in \eqref{dt2.2} and \eqref{dt2.3} respectively, we get
	\begin{align*}
		\lambda\Psi_1(\lambda p^{-1}q(c), a, q^{-1}\lambda^{-1}p(b))
		\overset{\eqref{dt2.2}}&{=}\Psi_1(\lambda(a), \lambda p^{-1}q(c), q^{-1}\lambda^{-1}p(b))\\
		\overset{\eqref{dt2.3}}&{=} \Psi_1(\lambda p^{-1}q(c), a, q^{-1}\lambda^{-1}p(b)),
	\end{align*}
	and since $\lambda$, $p$ and $q$ are bijections, this simply means that $\im(\Psi_1)$ is contained in the fixed points of $\lambda$. But $\Psi\colon X^3\to X^3$ is bijective, thus $\im(\Psi_1)$ must be the whole of $X$, whence $\lambda = \id$.
\end{proof}
\begin{corollary}\label{cor:DisoNotDtwists}
	There are D-isomorphisms that are not Drinfeld twists.
\end{corollary}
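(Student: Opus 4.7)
The corollary is a direct existence-level consequence of Lemma \ref{lem:permutation-Dtwist}, so the plan is simply to exhibit a concrete witness.

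I would fix any set $X$ with $|X| \geq 2$ and any non-identity permutation $\lambda \in \mathfrak{S}_X$, and form the associated permutation solution $r(a,b) = (\lambda(b), a)$. Next I would choose $p, q \in \mathfrak{S}_X$ satisfying $p\lambda p^{-1} = q\lambda q^{-1}$; the easiest choice is $p = \id$ and $q = \lambda^k$ for some $k \geq 1$, since powers of $\lambda$ commute with $\lambda$ so this conjugation condition is automatic. The discussion preceding Lemma \ref{lem:permutation-Dtwist} then guarantees that the bijection $F(a,b) := (p(a), q(b))$ intertwines $r$ with the solution $F r F^{-1}(a, b) = (p\lambda q^{-1}(b), q p^{-1}(a))$, so $F$ is a D-isomorphism in the sense of Definition \ref{def:Dhom}.

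Finally I would invoke Lemma \ref{lem:permutation-Dtwist}: since $\lambda \neq \id$, no such $F$ can be a Drinfeld twist. This yields a D-isomorphism that is not a Drinfeld twist, proving the corollary. No obstacle is expected, as all the structural content has already been established in the lemma; the corollary is purely its existence-level translation.
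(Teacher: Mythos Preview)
Your proposal is correct and takes essentially the same approach as the paper: the corollary is stated immediately after Lemma~\ref{lem:permutation-Dtwist} with no separate proof, so the intended argument is precisely ``apply the lemma to any $\lambda\neq\id$''. Your only addition is to make the witness explicit via $p=\id$, $q=\lambda^{k}$, which is a perfectly acceptable (and helpful) instantiation.
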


\section{Reflections and Drinfeld twists for braided groups}\label{sec:rebg}
We recall the classical definition of a braided group, as it appeared in Lu, Yan, and Zhu \cite{LYZ}.
\begin{definition}\label{def:br_group}
Let $G$ be a group. A \emph{braiding} on $G$ is a bijective map $r\colon G\times G\to G\times G$, satisfying
\begin{align}
\label{eq:a1-1a}\tag{\sc bg1}& r(a,1) = (1,a),\\
\label{eq:1b-b1}\tag{\sc bg2}&r(1,b) = (b,1),\\ 
\label{eq:hex:rm23}&\tag{\sc bg3}
r m_{23} = m_{12}r_{23}r_{12},\\
\label{eq:hex:rm12}\tag{\sc bg4}&r m_{12}=  m_{23}r_{12}r_{23},\\
\label{eq:mult}&\tag{\sc bg5}
mr = m,\end{align}
where $m\colon G\times G\to G$ is the multiplication of $G$. 
\end{definition}
Equivalenty (see \cite{LYZ}), a braiding on $G$ is provided by a pair of actions $\rightharpoonup,\leftharpoonup\colon G\times G\to  G$, left and right respectively, which are compatible in the following sense:
\[(a\rightharpoonup b)(a\leftharpoonup b) = ab.\]
We switch between the two equivalent settings by imposing $r(a,b) = (a\rightharpoonup b, a\leftharpoonup b)$. A morphism of braided groups is a group homomorphism that intertwines the respective braidings. We denote by $\mathsf{BrGp}$ the category of braided groups.

It is well known that braidings on groups are also bijective non-degenerate solutions to the \textsc{ybe} \cite{LYZ}.
\begin{remark}
	Let $(a,b)\mapsto (a\rightharpoondown b, a\leftharpoondown b)$ denote the inverse of $r$. In a braided group, one easily has (see \cite{LYZ})
	\[  a\rightharpoondown b = (b^{-1}\leftharpoonup a^{-1})^{-1},\quad a\leftharpoondown b = (b^{-1}\rightharpoonup a^{-1})^{-1}.\]
	It is known that $r^{-1}$ is also a braiding.
\end{remark}

The following equivalent formulation of braided groups was given by Guarnieri and Vendramin \cite{guarnieri2017skew}, relieving an abelianity hypothesis from Rump \cite{rump2007braces}.
\begin{definition}[\cite{guarnieri2017skew,rump2007braces}]
A \textit{skew brace} $(G,+,\cdot)$ is the datum of a set $G$ with two group operations (we denote the first one additively, without implying it to be abelian, and the second one multiplicatively), satisfying the compatibility
\[ a(b+c) = ab - a+ ac. \]
A skew brace is called a \textit{brace} if $(G,+)$ is abelian.
\end{definition}
\begin{proposition}[\cite{guarnieri2017skew,rump2007braces}]
Let $(G,\cdot)$ be a group. The following data are equivalent:
\begin{enumerate}
\item a braiding $r$;
\item a second group operation $+$ such that $(G,+,\cdot)$ is a skew brace.
\end{enumerate}
The correspondence is as follows: given a braiding $r$, we define $a+b = a(a^{-1}\rightharpoonup b)$; and given the group operation $+$, we define $a\rightharpoonup b = -a+ab$, $a\leftharpoonup b = (a\rightharpoonup b)^{-1}ab$.

Through this correspondence, involutive braidings correspond to braces. 
\end{proposition}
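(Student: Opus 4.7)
The plan is to exhibit the two constructions announced in the statement, show each yields a structure of the required type, and verify that they are mutually inverse. The braiding compatibility \eqref{eq:mult}, i.e.\@ $(a\rightharpoonup b)(a\leftharpoonup b) = ab$, lets $\leftharpoonup$ be recovered from $\rightharpoonup$ (and vice versa), so it is enough to track one of the two components of $r$.

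For $(1)\Rightarrow(2)$, I would first observe that projecting the braided-group axioms onto the first coordinate turns them into statements about $\rightharpoonup$ alone: \eqref{eq:1b-b1} yields $1\rightharpoonup b = b$, and \eqref{eq:hex:rm12} yields $(ab)\rightharpoonup c = a\rightharpoonup(b\rightharpoonup c)$, so $\rightharpoonup$ is a left action of $(G,\cdot)$ on itself by bijections. Setting $a+b := a(a^{-1}\rightharpoonup b)$, the element $1$ is a two-sided unit and the additive inverse of $a$ is $a\rightharpoonup a^{-1}$ (both check with a one-line computation using the action axioms). Associativity of $+$ and the skew brace compatibility $a(b+c) = ab - a + ac$ then follow from direct manipulation using the definition of $+$, the left-action property, and \eqref{eq:mult}.

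For $(2)\Rightarrow(1)$, setting $a\rightharpoonup b := -a+ab$ and $a\leftharpoonup b := (a\rightharpoonup b)^{-1}ab$ makes \eqref{eq:mult} automatic. The relation $0_{+} = 1$ (obtained by plugging $a=1$ into the skew brace compatibility) gives \eqref{eq:a1-1a} and \eqref{eq:1b-b1}. The main computation is to check the hexagons \eqref{eq:hex:rm23} and \eqref{eq:hex:rm12}; I would reduce these to the two auxiliary identities
\[ a\rightharpoonup(b+c) = (a\rightharpoonup b) + (a\rightharpoonup c), \qquad (ab)\rightharpoonup c = a\rightharpoonup(b\rightharpoonup c), \]
stating respectively that $\lambda_a := a\rightharpoonup\blank$ is additive and that $\rightharpoonup$ is a left action of $(G,\cdot)$. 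Both are short consequences of the skew brace compatibility, and together with \eqref{eq:mult} they imply the hexagons. Bijectivity of $r$ follows from the bijectivity of each $\lambda_a$.

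That the two constructions are mutually inverse is a direct substitution, where the key lemma is that each $\lambda_a$ is an automorphism of $(G,+)$ (from which one derives, for instance, $a\cdot(-a^{-1}) = a+a$ in additive notation). For the involutive case, a short computation shows that $(a\rightharpoonup b)\rightharpoonup(a\leftharpoonup b) = a$ translates into $-a + ab + a = ab$ for all $a, b$, which, as $ab$ ranges over $G$, is equivalent to commutativity of $+$; the second involutivity equation is then automatic from \eqref{eq:mult}. The main technical nuisance is the book-keeping when translating the hexagons into additive identities; everything else amounts to organised rearrangement of the skew brace relation.
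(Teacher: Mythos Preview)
The paper does not supply its own proof of this proposition: it is stated with attribution to \cite{guarnieri2017skew,rump2007braces} and used as background. Your outline follows the standard argument found in those references (deriving the left action $\lambda_a$ from \eqref{eq:hex:rm12} and \eqref{eq:1b-b1}, transporting the group structure along it, and checking the hexagons via the additivity and action identities for $\lambda$), and the computations you indicate all go through; in particular your derivation of the involutive case is correct, since $-ab+a+ab=a$ and your $-a+ab+a=ab$ are equivalent rearrangements of the commutativity of $+$. The only place where a reader might want an extra line is the bijectivity of $r$ in $(2)\Rightarrow(1)$: ``bijectivity of each $\lambda_a$'' alone is not quite enough, but combining it with \eqref{eq:mult} one recovers $a$ from $(u,v)=r(a,b)$ via $a+u=uv$, hence $a=uv-u$ and then $b=\lambda_a^{-1}(u)$, which gives an explicit inverse.
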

For every group $(G,\cdot)$, the datum $(G,\cdot, \cdot)$ is a skew brace, which is called the \textit{trivial skew brace} on $G$. This corresponds to the braiding $(a,b)\mapsto (b, b^{-1}ab)$. 

\subsection{Group reflections} Following \cite{de2019actions}, we now give the definition of reflections for a braided group. This is a stronger notion than just a reflection for the associated set-theoretic solution.
\begin{definition}
A map $k\colon G\to G$ is said to be a  \emph{right group reflection} for $(G,r)$ if the following conditions are satisfied:
\begin{align}
\label{eq:reflection-unity}\tag{\sc bre1}&k(1) = 1,\\\label{eq:reflection-product}\tag{\sc bre2}&k(ab) = (a\rightharpoonup k(b))\; k(a\leftharpoonup k(b)),\\
\label{eq:weird}\tag{\sc bre3}&
k(a) = (a\rightharpoonup b)\rightharpoonup k(a\leftharpoonup b)\text{ for all }b.
\end{align}
A map $k\colon G\to G$ is a \textit{left group reflection} if it satisfies
\begin{align}
	\label{eq:right_reflection-unity}\tag{\sc bre1$'$}&k(1) = 1,\\\label{eq:right_reflection-product}\tag{\sc bre2$'$}&k(ab) = k(k(a)\rightharpoonup b)\; (k(a)\leftharpoonup b),\\
	\label{eq:right_weird}\tag{\sc bre3$'$}&
	k(b) = k(a\rightharpoonup b)\leftharpoonup (a\leftharpoonup b)\text{ for all }a.
\end{align}
When we say `group reflection', we always mean a right group reflection.
\end{definition}
Notice that the notion of group reflection is the same as De Commer's notion of a  braided action \cite[Definition 6.1]{de2019actions} with $G$ acting on a singleton. 
\begin{lemma}
If $k$ is a (right, resp.\@ left) group reflection for $r$, then it is also a (right, resp.\@ left) reflection.
\end{lemma}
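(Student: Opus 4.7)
The plan is to verify \eqref{re1} and \eqref{re2} in turn. The key observation is that axiom \eqref{eq:weird} is precisely \eqref{re1} in disguise, while \eqref{re2} will follow by computing $k(ab)$ in two different ways using \eqref{eq:reflection-product}.

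For \eqref{re1}, I would simply apply \eqref{eq:weird} twice. Note that \eqref{eq:weird} asserts that the quantity $(a\rightharpoonup b)\rightharpoonup k(a\leftharpoonup b)$ equals $k(a)$ for every choice of $b$. Substituting $b$ gives that the left-hand side of \eqref{re1} equals $k(a)$; substituting $k(b)$ in place of $b$ gives that the right-hand side of \eqref{re1} also equals $k(a)$.

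For \eqref{re2}, the strategy is to evaluate $k(ab)$ via \eqref{eq:reflection-product} in two ways. Applied directly to the pair $(a,b)$, axiom \eqref{eq:reflection-product} gives
\[ k(ab) = (a\rightharpoonup k(b)) \cdot k(a\leftharpoonup k(b)). \]
Applied to the pair $(a\rightharpoonup b,\; a\leftharpoonup b)$, whose product is $ab$ by the compatibility of $\rightharpoonup$ and $\leftharpoonup$, the same axiom gives
\[ k(ab) = \big((a\rightharpoonup b)\rightharpoonup k(a\leftharpoonup b)\big) \cdot k\big((a\rightharpoonup b)\leftharpoonup k(a\leftharpoonup b)\big), \]
and the first factor on the right is $k(a)$ by \eqref{eq:weird}. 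Comparing the two expressions for $k(ab)$, one obtains
\[ k\big((a\rightharpoonup b)\leftharpoonup k(a\leftharpoonup b)\big) = k(a)^{-1} (a\rightharpoonup k(b))\cdot k(a\leftharpoonup k(b)). \]
It remains to identify the right-hand side with $(a\rightharpoonup k(b))\leftharpoonup k(a\leftharpoonup k(b))$. For this, apply the compatibility $(u\rightharpoonup v)(u\leftharpoonup v) = uv$ to $u = a\rightharpoonup k(b)$ and $v = k(a\leftharpoonup k(b))$: using \eqref{eq:weird} once more (with $b$ replaced by $k(b)$) to identify $u\rightharpoonup v$ with $k(a)$, this yields
\[ k(a)\cdot \big((a\rightharpoonup k(b))\leftharpoonup k(a\leftharpoonup k(b))\big) = (a\rightharpoonup k(b)) \cdot k(a\leftharpoonup k(b)), \]
and canceling $k(a)$ from the left gives \eqref{re2}.

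No real obstacle is expected; the only point that requires a moment of thought is noticing that \eqref{eq:weird} plays a double role: once to rewrite the auxiliary factor $(a\rightharpoonup b)\rightharpoonup k(a\leftharpoonup b)$ as $k(a)$ in the two-way computation of $k(ab)$, and once to recognise that $(a\rightharpoonup k(b))\rightharpoonup k(a\leftharpoonup k(b))$ is again $k(a)$ when applying compatibility to the right-hand side of \eqref{re2}.
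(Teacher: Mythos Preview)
Your argument is correct. Both \eqref{re1} and \eqref{re2} are verified cleanly: \eqref{eq:weird} immediately gives \eqref{re1}, and your two-way evaluation of $k(ab)$ via \eqref{eq:reflection-product} together with the braided-group compatibility $(u\rightharpoonup v)(u\leftharpoonup v)=uv$ yields \eqref{re2} exactly as you describe.

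The paper, however, does not carry out this computation: it simply invokes \cite[Lemma~6.4]{de2019actions}, observing that a group reflection is the special case of De~Commer's braided action on a singleton, for which the reflection equation is proven there. So your route is genuinely different in that it is self-contained and elementary, unpacking directly how \eqref{eq:weird} and \eqref{eq:reflection-product} conspire to give the two components of the \textsc{re}, whereas the paper's route is a one-line appeal to the more general framework of braided actions. Your approach has the advantage of making transparent exactly which axioms are used and where---in particular, that \eqref{eq:reflection-unity} plays no role---while the paper's citation buys brevity and situates the result in a broader context.
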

\begin{proof}
For right reflections, it is a special case of \cite[Lemma 6.4]{de2019actions}. For left reflections, the proof is symmetrical.
\end{proof}
An interpretation of \eqref{eq:reflection-unity} and \eqref{eq:reflection-product} will be given in \S\ref{sec:extending}, after introducing a suitable `ribbon notation'. Condition \eqref{eq:weird} can be interpreted as a `twisted involutivity': indeed, when $k= \id$, \eqref{eq:weird} is equivalent to the involutivity of $r$.

Recall that, for every braided set $(X,r)$, the map $k = \id$ and the constant map $k\colon x\mapsto c$ are reflections. On a braided group, $k =\id$ is a group reflection if and only if the braiding is involutive, while on the other hand, the constant map $k\colon x\mapsto 1$ is always a group reflection.
\begin{remark}
Given $(a,b)\in G\times G$, let $(a_1, b_1) = k_2 rk_2r(a,b)$ and $(a_2, b_2) = rk_2 rk_2(a,b)$ be the two sides of the \textsc{re} applied to $(a,b)$, and suppose that $k\colon G\to G$ satisfies \eqref{eq:reflection-unity} and \eqref{eq:reflection-product}, but not necessarily \eqref{eq:weird}. With the same proof as \cite[Lemma 6.4]{de2019actions}, one can still prove that $a_1b_1 = a_2 b_2$.
\end{remark}
\begin{remark}\label{rem:k|_X}
Let $(G,r)$ be a braided group with a set of generators $X$. Then \eqref{eq:reflection-unity} and \eqref{eq:reflection-product} imply that a group reflection $k$ is uniquely determined by its value on the generators. 

Suppose now that $r$ restricts to a map $X\times X\to X\times X$, and that $k$ restricts to a map $X\to X$. If \eqref{eq:weird} holds in $G$, then it holds in particular for the restriction $k|_X$ of $k$ to $X$. Conversely, if $k|_X$ satisfies \eqref{eq:weird} for all $a,b\in X$, then using \eqref{eq:reflection-unity} and \eqref{eq:reflection-product} we can extend $k|_X$ to a map $k$ satisfying \eqref{eq:weird} for all $a,b\in G$. Condition \eqref{eq:weird} on $k$ can be proved inductively by iterated applications of \eqref{eq:weird} on the single strands; see Figure \ref{fig:proof_weird_0}.
\end{remark}
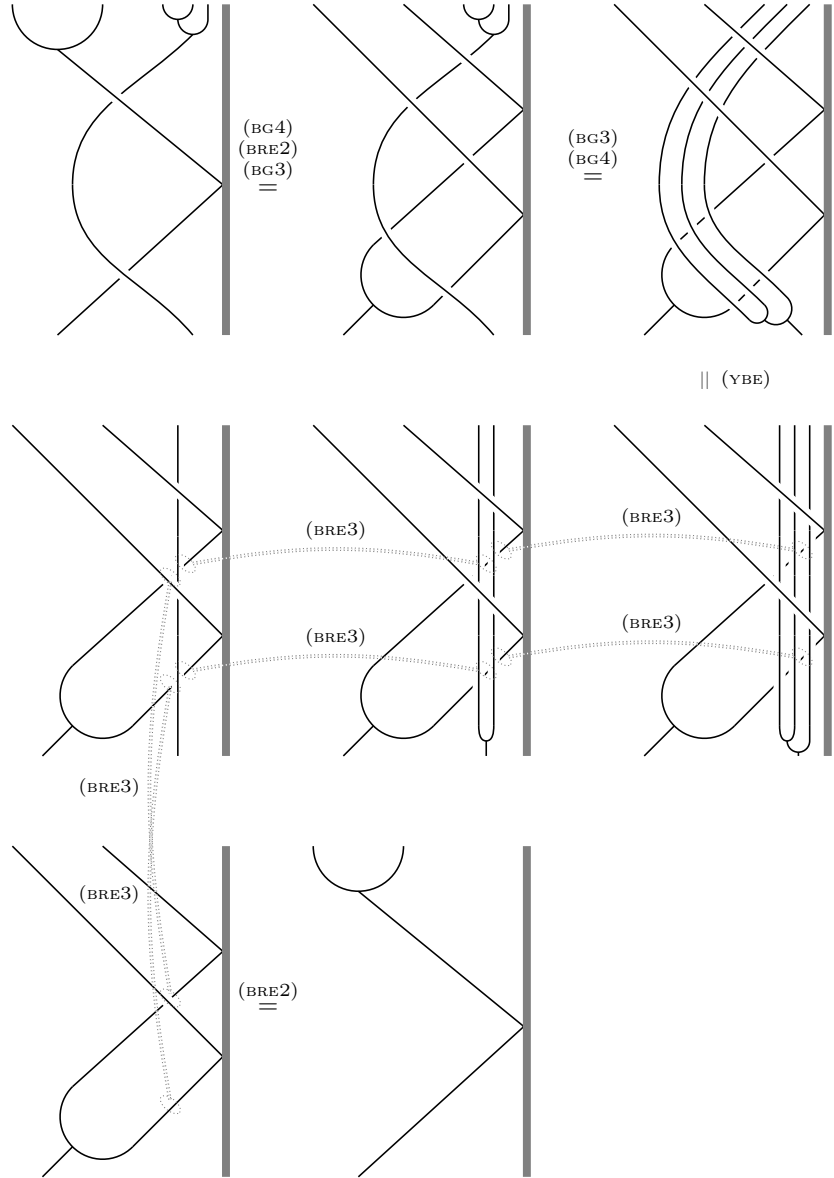
\begin{figure}[t]
\begin{center}
\begin{tikzpicture}[x=0.4cm, y=0.4cm]
\draw[draw=white, double=black, line width=2pt](6,10) to[out=-135,in=90](2,5);
\draw[draw=white, double=black, line width=2pt] (1.5,9.5)--(7,5)--(1.5,0);
\draw[line width=0.6pt] (0,11) to[out=-90, in=180](1.5,9.5) to[out=0, in=-90] (3,11);
\draw[draw=white, double=black, line width=2pt] (2,5) to[out=-90, in=130](6,0);
\draw[line width=0.6pt] (5,11) to[out=-90, in=180](5.5,10.5) to[out=0, in=-90] (6,11);
\draw[line width=0.6pt] (5.5,10.5) to[out=-90, in=180](6,10)to[out=0, in=-90] (6.5,10.5) to[out=90, in=-90] (6.5,11);
\draw[draw=gray, line width=3pt] (7.1,0) to (7.1,11);
%
\draw[draw=white, double=black, line width=2pt](16,10) to[out=-135,in=90](12,5);
\draw[draw=white, double=black, line width=2pt] (13,11)--(17,7.5)--(12,3);
\draw[draw=white, double=black, line width=2pt] (10,11)--(17,4)--(14,1);
\draw[draw=white, double=black, line width=2pt](12,1)--(11,0);
\draw[draw=white, double=black, line width=2pt] (12,5) to[out=-90, in=130](16,0);
\draw[line width=0.6pt] (12,3) to[out=-135, in=135](12,1) to[out=-45, in=-135] (14,1);
\draw[line width=0.6pt] (15,11) to[out=-90, in=180](15.5,10.5) to[out=0, in=-90] (16,11);
\draw[line width=0.6pt] (15.5,10.5) to[out=-90, in=180](16,10)to[out=0, in=-90] (16.5,10.5) to[out=90, in=-90] (16.5,11);
\draw[draw=gray, line width=3pt] (17.1,0) to (17.1,11);
%
\draw[draw=white, double=black, line width=2pt](25,11) to[out=-135,in=90](21.5,5);
\draw[draw=white, double=black, line width=2pt](25.75,11) to[out=-135,in=90](22.25,5);
\draw[draw=white, double=black, line width=2pt](26.5,11) to[out=-135,in=90](23,5);
\draw[draw=white, double=black, line width=2pt] (23,11)--(27,7.5)--(22,3);
\draw[draw=white, double=black, line width=2pt] (20,11)--(27,4)--(24,1);
\draw[draw=white, double=black, line width=2pt](22,1)--(21,0);
\draw[line width=0.6pt] (22,3) to[out=-135, in=135](22,1) to[out=-45, in=-135] (24,1);
\draw[draw=white, double=black, line width=2pt] (21.5,5) to[out=-90, in=135](24.5,0.5);
\draw[draw=white, double=black, line width=2pt] (22.25,5) to[out=-90, in=135](25,1);
\draw[draw=white, double=black, line width=2pt] (23,5) to[out=-90, in=135](25.5,1.5);
\draw[line width=0.6pt] (24.5,0.5) to[out=-45, in=-135](25,0.5) to[out=45, in=-45] (25,1);
\draw[line width=0.6pt] (25,0.5) to[out=-45, in=-135](25.75,0.5)to[out=45, in=-45] (25.75,1.25) to[out=135, in=-45] (25.5,1.5);
\draw[line width=0.6pt] (25.75,0.5)--(26.25,0);
\draw[draw=gray, line width=3pt] (27.1,0) to (27.1,11);
%
\draw[draw=white, double=black, line width=2pt](25.5,-3)--(25.5,-6.7);
\draw[draw=white, double=black, line width=2pt](26,-3)--(26,-6.5);
\draw[draw=white, double=black, line width=2pt](26.5,-3)--(26.5,-6.5);
\draw[draw=white, double=black, line width=2pt] (25.5,-8.2)--(25.5,-10);
\draw[draw=white, double=black, line width=2pt] (26,-8)--(26,-10);
\draw[draw=white, double=black, line width=2pt] (26.5,-8)--(26.5,-10);
\draw[draw=white, double=black, line width=2pt] (23,-3)--(27,-6.5)--(22,-11);
\draw[draw=white, double=black, line width=2pt] (20,-3)--(27,-10)--(24,-13);
\draw[draw=white, double=black, line width=2pt](22,-13)--(21,-14);
\draw[line width=0.6pt] (22,-11) to[out=-135, in=135](22,-13) to[out=-45, in=-135] (24,-13);
\draw[draw=white, double=black, line width=2pt] (25.5,-6.7)--(25.5,-8.2);
\draw[draw=white, double=black, line width=2pt] (26,-6.5)--(26,-8);
\draw[draw=white, double=black, line width=2pt] (26.5,-6.5)--(26.5,-8);
\draw[draw=white, double=black, line width=2pt] (25.5,-10)--(25.5,-13);
\draw[draw=white, double=black, line width=2pt] (26,-10)--(26,-13);
\draw[draw=white, double=black, line width=2pt] (26.5,-10)--(26.5,-13);
\draw[line width=0.6pt] (25.5,-13) to[out=-90, in=180](25.75,-13.5) to[out=0, in=-90] (26,-13);
\draw[line width=0.6pt] (25.75,-13.5) to[out=-90, in=180](26.125,-13.875) to[out=0, in=-90] (26.5,-13.5)to[out=90, in=-90](26.5,-13);
\draw[line width=0.6pt] (26.125,-13.875)--(26.125,-14);
\draw[draw=gray, line width=3pt] (27.1,-14) to (27.1,-3);
%
\draw[draw=white, double=black, line width=2pt](15.5,-3)--(15.5,-6.7);
\draw[draw=white, double=black, line width=2pt](16,-3)--(16,-6.5);
\draw[draw=white, double=black, line width=2pt] (15.5,-8.2)--(15.5,-10);
\draw[draw=white, double=black, line width=2pt] (16,-8)--(16,-10);
\draw[draw=white, double=black, line width=2pt] (13,-3)--(17,-6.5)--(12,-11);
\draw[draw=white, double=black, line width=2pt] (10,-3)--(17,-10)--(14,-13);
\draw[draw=white, double=black, line width=2pt](12,-13)--(11,-14);
\draw[line width=0.6pt] (12,-11) to[out=-135, in=135](12,-13) to[out=-45, in=-135] (14,-13);
\draw[draw=white, double=black, line width=2pt] (15.5,-6.7)--(15.5,-8.2);
\draw[draw=white, double=black, line width=2pt] (16,-6.5)--(16,-8);
\draw[draw=white, double=black, line width=2pt] (15.5,-10)--(15.5,-13);
\draw[draw=white, double=black, line width=2pt] (16,-10)--(16,-13);
\draw[line width=0.6pt] (15.5,-13) to[out=-90, in=180](15.75,-13.5) to[out=0, in=-90] (16,-13);
\draw[line width=0.6pt] (15.75,-13.5)--(15.75, -14);
\draw[draw=gray, line width=3pt] (17.1,-14) to (17.1,-3);
%
\draw[draw=white, double=black, line width=2pt](5.5,-3)--(5.5,-6.7);
\draw[draw=white, double=black, line width=2pt] (5.5,-8.2)--(5.5,-10);
\draw[draw=white, double=black, line width=2pt] (3,-3)--(7,-6.5)--(2,-11);
\draw[draw=white, double=black, line width=2pt] (0,-3)--(7,-10)--(4,-13);
\draw[draw=white, double=black, line width=2pt](2,-13)--(1,-14);
\draw[line width=0.6pt] (2,-11) to[out=-135, in=135](2,-13) to[out=-45, in=-135] (4,-13);
\draw[draw=white, double=black, line width=2pt] (5.5,-6.7)--(5.5,-8.2);
\draw[draw=white, double=black, line width=2pt] (5.5,-10)--(5.5,-14);
\draw[draw=gray, line width=3pt] (7.1,-14) to (7.1,-3);
%
\draw[draw=white, double=black, line width=2pt] (3,-17)--(7,-20.5)--(2,-25);
\draw[draw=white, double=black, line width=2pt] (0,-17)--(7,-24)--(4,-27);
\draw[draw=white, double=black, line width=2pt](2,-27)--(1,-28);
\draw[line width=0.6pt] (2,-25) to[out=-135, in=135](2,-27) to[out=-45, in=-135] (4,-27);
\draw[draw=gray, line width=3pt] (7.1,-28) to (7.1,-17);
%
\draw[draw=white, double=black, line width=2pt] (11.5,-18.5)--(17,-23)--(11.5,-28);
\draw[line width=0.6pt] (10,-17) to[out=-90, in=180](11.5,-18.5) to[out=0, in=-90] (13,-17);
\draw[draw=gray, line width=3pt] (17.1,-28) to (17.1,-17);
%
\draw[densely dotted, draw=gray, draw opacity=0.8] (26.25,-7.15) circle [x radius=0.4, y radius=0.2, rotate=-40];
\draw[densely dotted, draw=gray, draw opacity=0.8] (16.25,-7.15) circle [x radius=0.4, y radius=0.2, rotate=-40];
\draw[double, densely dotted, draw=gray, draw opacity=0.8] (16.5,-7.15) to[bend left=10] node [above]{\scriptsize\eqref{eq:weird}}(26,-7.15);
\draw[densely dotted, draw=gray, draw opacity=0.8] (26.25,-10.7) circle [x radius=0.4, y radius=0.2, rotate=-40];
\draw[densely dotted, draw=gray, draw opacity=0.8] (16.25,-10.7) circle [x radius=0.4, y radius=0.2, rotate=-40];
\draw[double, densely dotted, draw=gray, draw opacity=0.8] (16.5,-10.7) to[bend left=10] node [above]{\scriptsize\eqref{eq:weird}}(26,-10.7);
\draw[densely dotted, draw=gray, draw opacity=0.8] (15.75,-7.6) circle [x radius=0.4, y radius=0.2, rotate=-40];
\draw[densely dotted, draw=gray, draw opacity=0.8] (5.75,-7.6) circle [x radius=0.4, y radius=0.2, rotate=-40];
\draw[double, densely dotted, draw=gray, draw opacity=0.8] (6,-7.6) to[bend left=10] node[above]{\scriptsize\eqref{eq:weird}}(15.5,-7.6);
\draw[densely dotted, draw=gray, draw opacity=0.8] (15.75,-11.15) circle [x radius=0.4, y radius=0.2, rotate=-40];
\draw[densely dotted, draw=gray, draw opacity=0.8] (5.75,-11.15) circle [x radius=0.4, y radius=0.2, rotate=-40];
\draw[double, densely dotted, draw=gray, draw opacity=0.8] (6,-11.15) to[bend left=10] node[above]{\scriptsize\eqref{eq:weird}}(15.5,-11.15);
\draw[densely dotted, draw=gray, draw opacity=0.8] (5.25,-8.05) circle [x radius=0.4, y radius=0.2, rotate=-40];
\draw[densely dotted, draw=gray, draw opacity=0.8] (5.25,-22.05) circle [x radius=0.4, y radius=0.2, rotate=-40];
\draw[double, densely dotted, draw=gray, draw opacity=0.8] (5.25,-8.2) to[bend right=10] node[left]{\scriptsize\eqref{eq:weird}}(5.25,-21.90);
\draw[densely dotted, draw=gray, draw opacity=0.8] (5.25,-11.6) circle [x radius=0.4, y radius=0.2, rotate=-40];
\draw[densely dotted, draw=gray, draw opacity=0.8] (5.25,-25.6) circle [x radius=0.4, y radius=0.2, rotate=-40];
\draw[double, densely dotted, draw=gray, draw opacity=0.8] (5.25,-11.7) to[bend right=10] node[left]{\scriptsize\eqref{eq:weird}}(5.25,-25.5);
\node() at (8.5,-22) {$\overset{\eqref{eq:reflection-product}}{=}$};
\node() at (8.5,6) {$\overset{\scriptsize\begin{matrix}\eqref{eq:hex:rm12}\\ \eqref{eq:reflection-product}\\ \eqref{eq:hex:rm23}\end{matrix}}{=}$};
\node() at (19.3,6) {$\overset{\scriptsize\begin{matrix}\eqref{eq:hex:rm23}\\ \eqref{eq:hex:rm12}\end{matrix}}{=}$};
\node() at (24,-1.5) {\raisebox{5.5pt}{\rotatebox{-90}{=}}\, \scriptsize\eqref{ybe}};
\end{tikzpicture}
\end{center}
\caption{If $X$ generates $G$, and $k|_X$ satisfies \eqref{eq:weird} for all $a,b\in X$, then $k$ satisfies \eqref{eq:weird} for all $a,b\in G$. The proof is by double induction on the number of generators in the expressions of $a$ and $b$. In the picture, we see the case when $a$ is the product of two generators, and $b$ is the product of three generators.}\label{fig:proof_weird_0}
\end{figure}
The following lemma deals with a special case that will be useful in \S\ref{sec:extending}.
\begin{lemma}\label{lem:inverseproperties}
	Let $k$ be a group reflection for $r$. Suppose moreover that $k$ satisfies
	\begin{equation}\label{eq:inversek}\tag{\sc inv} k(a^{-1}) = k(a)^{-1} 
	\end{equation}
	for all $a\in G$. Then $k$ is a left group reflection with respect to $r^{-1}$, and moreover $k(a) =k(b\leftharpoonup a)\rightharpoonup (b\rightharpoonup a) $ for all $b\in G$.
\end{lemma}
\begin{proof}
		Using \eqref{eq:inversek}, one has
		\begin{align*}
			k(ab)&= k(b^{-1}a^{-1})^{-1}\\
			&= \left( (b^{-1}\rightharpoonup k(a)^{-1}) k(b^{-1}\leftharpoonup k(a)^{-1})  \right)^{-1}\\
			&= \left(  (k(a)\leftharpoondown b)^{-1} k(k(a)\rightharpoondown b)^{-1} \right)^{-1}\\
			&= k(k(a)\rightharpoondown b)(k(a)\leftharpoondown b),
		\end{align*}
		which is \eqref{eq:right_reflection-product} with respect to $r^{-1}$; and
		\begin{align*}
			k(b)&= k(b^{-1})^{-1}\\
			&= \left((b^{-1}\rightharpoonup a^{-1})\rightharpoonup k(b^{-1}\leftharpoonup a^{-1})\right)^{-1}\\
			&= \left((a\leftharpoondown b)^{-1}\rightharpoonup k(a\rightharpoondown b)^{-1}\right)^{-1}\\
			&= k(a\rightharpoondown b)\leftharpoondown (a\leftharpoondown b),
		\end{align*}
		which is \eqref{eq:right_weird}. Finally, using the previous properties, we observe that
		\begin{align*}
			k(a)^{-1}&= (a^{-1}\leftharpoondown b^{-1})\leftharpoondown k(a^{-1}\rightharpoondown b^{-1})\\
			&= (b\rightharpoonup a)^{-1}\leftharpoondown k(b\leftharpoonup a)^{-1}\\
			&= \left( k(b\leftharpoonup a)\rightharpoonup (b\rightharpoonup a) \right)^{-1},
		\end{align*}
		whence $k(a) =k(b\leftharpoonup a)\rightharpoonup (b\rightharpoonup a) $.
\end{proof}
\subsection{Group Drinfeld twists} The following is a version of Drinfeld twists for braided groups, defined by Ghobadi \cite{ghobadi2021drinfeld}. The definition is motivated by the description of braided groups as remnants of coquasitriangular Hopf algebras in the category $\mathsf{SupLat}$, see \cite{GHOBADI2021607}.
\begin{definition}[{\cite[Definition 2.2]{ghobadi2021drinfeld}}]
Let $(G,r)$ be a braided group. A (\textit{group}) \textit{Drinfeld twist} for $(G,r)$ is the datum of maps $F\colon G\times G\to G\times G$, $\Phi,\Psi\colon G\times G\times G\to G\times G\times G$, such that $(F,\Phi,\Psi)$ is a set-theoretic Drinfeld twist for $r$, and moreover
\begin{align}
\label{bdt1}\tag{\sc bdt1} &\Phi(1,b,c) = (1,b,c),\quad \Psi(a,b,1) = (a,b,1),\\
\label{bdt2}\tag{\sc bdt2} &F(a,1)=(a,1),\quad F(1,b)=(1,b),\\
\label{bdt3}\tag{\sc bdt3}&m_{23} \Phi = Fm_{23},\\
\label{bdt4} \tag{\sc bdt4}&m_{12} \Psi = Fm_{12}.
\end{align}
\end{definition}
\begin{proposition}[{\cite[Theorem 2.5]{ghobadi2021drinfeld}}] \label{prop:twist_of_braiding_is_braiding}
Let $(G,m,1)$ be a group, with multiplication $m$ and unit $1$. Let $r$ be a braiding on $G$. Given a group Drinfeld twist $(F,\Phi, \Psi)$, one has that $G^F:=(G, mF^{-1}, 1)$ is a group, and the twisted solution $F r F^{-1}$ is a braiding on $G^F$.
\end{proposition}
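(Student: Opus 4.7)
The plan is to verify the two assertions of the proposition separately: (A) that $G^F=(G,m',1)$ with $m':=mF^{-1}$ is a group, and (B) that $s:=FrF^{-1}$ satisfies the braided-group axioms \eqref{eq:a1-1a}--\eqref{eq:mult} with respect to $m'$. The guiding principle is that \eqref{bdt3} and \eqref{bdt4} regulate the interaction of $F$ with $m$, that \eqref{dt2} and \eqref{dt3} regulate its interaction with $r$, and that the coherence condition \eqref{dt1} bridges these two worlds.

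For (A), the unit axiom is immediate from \eqref{bdt2}, which forces $F^{-1}$ to fix pairs of the form $(a,1)$ and $(1,b)$. For associativity, I would rewrite
\begin{equation*}
(a\cdot' b)\cdot' c = m\circ F^{-1}\circ m_{12}\circ F^{-1}_{12}\quad\text{and}\quad a\cdot'(b\cdot' c)=m\circ F^{-1}\circ m_{23}\circ F^{-1}_{23},
\end{equation*}
then invoke the inverted forms $F^{-1}m_{12}=m_{12}\Psi^{-1}$ and $F^{-1}m_{23}=m_{23}\Phi^{-1}$ of \eqref{bdt4} and \eqref{bdt3}. After absorbing $m\circ m_{12}=m\circ m_{23}$ by associativity of $m$, the two sides reduce to $m^{(3)}\Psi^{-1}F^{-1}_{12}$ and $m^{(3)}\Phi^{-1}F^{-1}_{23}$, which agree by \eqref{dt1}. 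The existence of two-sided inverses is the one delicate step: I expect to argue it by showing that for each $a\in G$, both left- and right-translations $L'_a,R'_a\colon G\to G$ are bijections, exploiting the bijectivity of $F$ together with the fact that every fibre of $m\colon G\times G\to G$ is in bijection with $G$.

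For (B), the boundary conditions \eqref{eq:a1-1a} and \eqref{eq:1b-b1} for $s$ are immediate one-liners from \eqref{bdt2} and the corresponding properties of $r$; for instance $s(a,1)=FrF^{-1}(a,1)=Fr(a,1)=F(1,a)=(1,a)$. The multiplicativity \eqref{eq:mult} for $s$ reads $mF^{-1}\cdot FrF^{-1}=mF^{-1}$, which collapses to $mr=m$. The real computation is in the hexagon \eqref{eq:hex:rm23}: its left-hand side rewrites as
\begin{equation*}
s\circ m'_{23}=FrF^{-1}m_{23}F^{-1}_{23}\overset{\eqref{bdt3}}{=}Frm_{23}\Phi^{-1}F^{-1}_{23}\overset{\eqref{eq:hex:rm23}}{=}Fm_{12}r_{23}r_{12}\Phi^{-1}F^{-1}_{23},
\end{equation*}
while the right-hand side $m'_{12}s_{23}s_{12}$ I would massage using \eqref{dt1} in the form $F^{-1}_{12}F_{23}=\Psi\Phi^{-1}$, then push $\Phi^{-1}$ past $r_{23}$ by \eqref{dt3}, collapse $\Phi^{-1}F^{-1}_{23}F_{12}=\Psi^{-1}$ by another use of \eqref{dt1}, push $\Psi^{-1}$ past $r_{12}$ by \eqref{dt2}, and finally absorb $m_{12}\Psi=Fm_{12}$ by \eqref{bdt4}. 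The two sides then match after one last appeal to \eqref{dt1}. The hexagon \eqref{eq:hex:rm12} is handled by a dual chain of rewrites.

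The main obstacle is combinatorial bookkeeping: once the roles of the six defining identities are correctly apportioned, the argument is essentially forced, but the indexing is easy to lose track of. The one step that genuinely escapes this diagram-chase paradigm is the existence of inverses in $G^F$, which needs its own short bijectivity argument.
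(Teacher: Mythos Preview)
The paper itself does not prove this proposition: it is quoted from \cite[Theorem 2.5]{ghobadi2021drinfeld} without argument, so there is no in-paper proof to compare your plan against. That said, your outline is correct for the unit axiom, for associativity (the reduction via \eqref{bdt3}, \eqref{bdt4} to $m^{(3)}\Psi^{-1}F_{12}^{-1}=m^{(3)}\Phi^{-1}F_{23}^{-1}$ and then \eqref{dt1} is exactly right), for \eqref{eq:a1-1a}, \eqref{eq:1b-b1}, \eqref{eq:mult}, and for the hexagons; the chain of rewrites you describe for \eqref{eq:hex:rm23} does close.

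The genuine gap is the step you yourself flag: inverses in $G^F$. The sketch you give does not work. Bijectivity of $F$ together with the fact that each fibre $m^{-1}(c)$ is in bijection with $G$ only tells you that $(m')^{-1}(c)=F\bigl(m^{-1}(c)\bigr)$ is again a copy of $G$ inside $G\times G$; it says nothing about how that copy projects to the first or second factor, and hence nothing about bijectivity of $L'_a$ or $R'_a$. Concretely, right-invertibility of every $a$ amounts to surjectivity of the map $g\mapsto \pi_1 F(g,g^{-1})$, and no combination of \eqref{dt1}--\eqref{dt3}, \eqref{bdt1}--\eqref{bdt4} yields this by a pure fibre count. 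A different device is needed here: for instance, one can run all of part~(B) at the monoid level first (nothing in your hexagon verification actually uses invertibility in $G^F$) and then extract inverses from the resulting structure, or pass through the skew-brace description underlying Proposition~\ref{prop:related_iff_same_additive_group}. For comparison, in the special reflection-twist case treated later in this paper the inverse is produced by an explicit formula rather than a counting argument (see the proof of Theorem~\ref{thm:br-optimal}, where $a^{-1}\leftharpoonup k(a)$ is checked to be a left inverse).
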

\begin{proposition}[{\cite[Theorem 2.7 and previous paragraph]{ghobadi2021drinfeld}}] \label{prop:related_iff_same_additive_group}\label{prop:ghobadi_lemma_for_groups}
The precategory having braided groups as vertices, and group Drinfeld twists as arrows, is a groupoid, which we denote by $\mathsf{BGDrin}$. The subgroupoid of braided groups having $G$ as underlying set is denoted by $\mathsf{BGDrin}_G$.

Let $(G,+, \circ)$ and $(G, +', \circ')$ be skew braces: then the corresponding braidings, on $(G,\circ)$ and $(G,\circ')$ respectively, are related by a group Drinfeld twist if and only if $(G,+)\cong (G,+')$.
\end{proposition}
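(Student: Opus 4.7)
For the groupoid structure, Lemma \ref{lem:ghobadi} already supplies composition and inversion at the level of set-theoretic Drinfeld twists, so I would just check that the extra axioms \eqref{bdt1}--\eqref{bdt4} survive these operations. For instance, applying $F$ to the relation $F^{-1}(a,1)=(a,1)$ that one wants for $F^{-1}$ and invoking \eqref{bdt2} for $F$ yields the claim; the remaining axioms for the inverse, and all axioms for compositions $GF$, are obtained analogously by substituting into the explicit formulas from Lemma \ref{lem:ghobadi} and using \eqref{bdt1}--\eqref{bdt4} for $F$ and $G$. The identity is tautologically a group Drinfeld twist, so what remains is just these routine checks.

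For the ``if'' direction of the characterisation, I plan to factor through the trivial skew brace. Concretely, I first prove that every skew brace $(G,+,\circ)$ is twist-related to its trivial counterpart $(G,+,+)$, using the candidate
\[F(a,b) := (a,\; a^{-1}\circ (a+b)), \qquad F^{-1}(a,b) = (a,\; a\rightharpoonup b).\]
The defining identity $a\circ(b+c) = (a\circ b)-a+(a\circ c)$ of a skew brace gives $mF^{-1} = \circ$ and $Fr_+F^{-1}=r_\circ$ after a short computation; the maps $\Phi,\Psi$ needed to promote $F$ to a group Drinfeld twist are essentially forced by \eqref{bdt3}--\eqref{bdt4}, and the remaining conditions \eqref{dt1}--\eqref{dt3} and \eqref{bdt1}--\eqref{bdt2} are a routine verification. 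Granted this step, an additive isomorphism $\phi\colon(G,+)\to(G,+')$ induces a (trivial) Drinfeld twist $(G,+,+)\to(G,+',+')$ componentwise (cf.\ the remark after Definition \ref{def:twist} and Convention \ref{conv:extendtwist}), and composing the three twists
\[(G,\circ,r)\rightsquigarrow (G,+,+)\rightsquigarrow (G,+',+')\rightsquigarrow (G,\circ',r')\]
inside the groupoid yields the desired relation.

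The ``only if'' direction is the main obstacle. Given a group Drinfeld twist $(F,\Phi,\Psi)\colon(G,\circ,r)\to(G,\circ',r')$, one must extract an isomorphism $(G,+)\to (G,+')$. Since the ``$+$''-operation is recovered from the braided group as $a+b = a\circ(a^{-1}\rightharpoonup b)$, and both $\circ$ and $\rightharpoonup$ are deformed in controlled ways under the twist (Proposition \ref{prop:twist_of_braiding_is_braiding} and \eqref{bdt2}--\eqref{bdt4}), one expects an isomorphism to emerge naturally from $F$ itself. The heart of the difficulty is to pin down such a map explicitly and to verify that it is a homomorphism in $+$--$+'$: this is the content of \cite[Theorem 2.7]{ghobadi2021drinfeld}, whose crucial ingredient is a compatibility computation combining the brace identity with the cocycle-type axioms \eqref{bdt3}--\eqref{bdt4}. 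I would try to reproduce that argument rather than invent a new one.
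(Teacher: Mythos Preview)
The paper does not give its own proof of this proposition: it is stated with the attribution \cite[Theorem 2.7 and previous paragraph]{ghobadi2021drinfeld} and no \texttt{proof} environment follows. So there is nothing in the paper itself to compare your argument against; the result is imported wholesale from Ghobadi.

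That said, your outline is consistent with what Ghobadi does and with the surrounding material in the paper. Your explicit twist $F(a,b)=(a,\,a^{-1}\circ(a+b))$ between $(G,+,+)$ and $(G,+,\circ)$ is exactly the dual of the one underlying Proposition~\ref{prop:decomposition_arrows} (the ``type~\ref{item:type2}'' twist $(a,b)\mapsto(\varrho_b(a),b)$ with $\varrho_b(a)=a\leftharpoonup b^{-1}$, up to left/right conventions), and the three-step composition through the trivial skew brace is precisely the decomposition that the paper later exploits in \S\ref{sec:groupoid}. For the ``only if'' direction you correctly identify that the substance lies in extracting an additive isomorphism from the twist data, and you defer to \cite[Theorem 2.7]{ghobadi2021drinfeld}; that is honest, but note that this is really the entire content of the claim---the rest is bookkeeping---so as written your proposal is more of a roadmap than a proof.
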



\begin{remark}[{\cite[\S2]{ghobadi2021drinfeld}}]\label{rem:isogroups-are-twist}
Isomorphic braided groups are in particular group Drinfeld twists of each other. Thus the above groupoids can be redefined up to isomorphism class.
\end{remark}
\subsection{Group reflections yield group Drinfeld twists} We now prove a group-theoretic version of Theorem \ref{thm:J-is-Drinfeld}.
\begin{theorem}\label{thm:J-is-braided-Drinfeld}
Let $(G,r)$ be a braided group, and $k$ be a group reflection. Then, the associated guitar map $J = J^{k;2}$ is a group Drinfeld twist for $(G,r)$. As a consequence, $(\refl{G}{k}, \refl{r}{k}):=(G^J, JrJ^{-1}) $ is a braided group.
\end{theorem}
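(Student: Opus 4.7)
The plan is to leverage Theorem \ref{thm:J-is-Drinfeld}, which already hands us $J$, $\Phi$ and $\Psi$ forming a set-theoretic Drinfeld twist for $r$. Since that theorem only needed $k$ to satisfy the reflection equation (which a group reflection does, by the lemma following the definition of group reflection), nothing has to be re-checked on the set-theoretic side. The whole proof therefore reduces to verifying the four extra compatibilities \eqref{bdt1}--\eqref{bdt4} with the group structure, and then invoking Proposition \ref{prop:twist_of_braiding_is_braiding} to conclude.

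First I would dispatch \eqref{bdt1} and \eqref{bdt2}, which are purely boundary checks. Using $k(1)=1$ from \eqref{eq:reflection-unity}, together with the `corner' identities $a\leftharpoonup 1 = a$, $1\leftharpoonup b = 1$, $b\rightharpoonup 1 = 1$ derived from \eqref{eq:a1-1a} and \eqref{eq:1b-b1}, one substitutes directly into the formulas for $J$, $\Phi$, $\Psi$ recalled in the proof of Theorem \ref{thm:J-is-Drinfeld} and the identities drop out.

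The substantive content sits in \eqref{bdt3} and \eqref{bdt4}. Comparing $m_{23}\Phi(a,b,c)$ with $F m_{23}(a,b,c) = (a\leftharpoonup k(bc),bc)$ reduces \eqref{bdt3} to
\[ a\leftharpoonup k(bc) \;=\; \bigl(a\leftharpoonup (b\rightharpoonup k(c))\bigr)\leftharpoonup k(b\leftharpoonup k(c)), \]
which follows by first applying \eqref{eq:reflection-product} to rewrite $k(bc) = (b\rightharpoonup k(c))\cdot k(b\leftharpoonup k(c))$, and then invoking the right-action property $a\leftharpoonup(xy) = (a\leftharpoonup x)\leftharpoonup y$ that is a direct consequence of \eqref{eq:hex:rm23}. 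Symmetrically, \eqref{bdt4} reduces to
\[ (ab)\leftharpoonup k(c) \;=\; \bigl(a\leftharpoonup (b\rightharpoonup k(c))\bigr)\cdot (b\leftharpoonup k(c)), \]
which is the `twisted distributivity' $(ab)\leftharpoonup x = (a\leftharpoonup (b\rightharpoonup x))\cdot (b\leftharpoonup x)$ obtained by unpacking \eqref{eq:hex:rm12}, specialised at $x=k(c)$. It is worth noting that the third reflection axiom \eqref{eq:weird} plays no role in this argument; it only intervened through Theorem \ref{thm:J-is-Drinfeld}.

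Once \eqref{bdt1}--\eqref{bdt4} are verified, Proposition \ref{prop:twist_of_braiding_is_braiding} immediately yields that $(G^J, JrJ^{-1})$ is a braided group. The main obstacle, such as it is, is bookkeeping: one must keep straight which factor of the triple each identity acts on, and in particular match \eqref{eq:reflection-product}---the one genuinely new identity provided by the group reflection structure---with the correct `slot' of $\Phi$ and $\Psi$. No deeper difficulty is expected.
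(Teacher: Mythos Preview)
Your proposal is correct and follows essentially the same route as the paper: invoke Theorem~\ref{thm:J-is-Drinfeld} for the set-theoretic Drinfeld-twist axioms, then verify \eqref{bdt1}--\eqref{bdt4} directly from the explicit formulas for $J$, $\Phi$, $\Psi$ using \eqref{eq:reflection-unity}, \eqref{eq:reflection-product}, and the braided-group identities \eqref{eq:a1-1a}--\eqref{eq:hex:rm12}. Your observation that \eqref{eq:weird} enters only through the reflection equation (and hence only via Theorem~\ref{thm:J-is-Drinfeld}) is also accurate.
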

\begin{proof}
It suffices to check the additional conditions \eqref{bdt1}--\eqref{bdt4} for $J$. We get two candidates $\Phi$ and $\Psi$ from the proof of Theorem \ref{thm:J-is-Drinfeld}, which make $J$ into a set-theoretic Drinfeld twist: thus we prove that they also work for the group-theoretic version. We compute
\begin{align*}
\Phi(1,b,c) &= \Big( \big( 1\leftharpoonup (b\rightharpoonup k(c))\big)\leftharpoonup k(b\leftharpoonup k(c)) ,\; b,\; c\Big)\overset{\eqref{eq:1b-b1}}{=} (1,b,c),\end{align*}
and
\begin{align*}
\Psi(a,b,1)&= \Big( a\leftharpoonup(b\rightharpoonup k(1)),\; b\leftharpoonup k(1),\; 1 \Big)\\
\overset{\eqref{eq:reflection-unity}}&{=}\Big( a\leftharpoonup(b\rightharpoonup 1),\; b\leftharpoonup 1,\; 1 \Big)
\overset{\eqref{eq:a1-1a}}{=} (a,b,1),
\end{align*}
which proves \eqref{bdt1}. As for \eqref{bdt2}, one has
\begin{align*}
J(a,1) &= (a\leftharpoonup k(1),1) = (a,1),\\
J(1,b)&= (1\leftharpoonup k(b),b) \stackrel{\eqref{eq:1b-b1}}{=} (1,b).
\end{align*}
We now check \eqref{bdt3}:
\begin{align*}
m_{23}\Phi(a,b,c) &= \Big( \big( a\leftharpoonup (b\rightharpoonup k(c))\big)\leftharpoonup k(b\leftharpoonup k(c)) ,\; bc\Big)\\
\overset{\eqref{eq:hex:rm23}}&{=} \Big(  a\leftharpoonup \big((b\rightharpoonup k(c))\,k(b\leftharpoonup k(c))\big) ,\; bc\Big)\\
\overset{\eqref{eq:reflection-product}}&{=}\Big( \big( a\leftharpoonup k(bc) ,\; bc\Big)\\
&= J m_{23}(a,b,c).\end{align*}We finally check \eqref{bdt4}:\begin{align*}
m_{12}\Psi(a,b,c)&= \Big( \big(a\leftharpoonup(b\rightharpoonup k(c)) \big)(b\leftharpoonup k(c)),\; c \Big)\\
\overset{\eqref{eq:hex:rm12}}&{=}\big(ab\leftharpoonup k(c),\; c \big)\\
&= Jm_{12}(a,b,c).\qedhere
\end{align*}
\end{proof}
Once again, the twisted structure $(\refl{G}{k}, \refl{r}{k})$ depends exclusively on the maps $\rho^{}_{k(a)}$; thus \textit{a priori} \eqref{eq:reflection-unity}--\eqref{eq:weird} are sufficient but not necessary in orded for $(\refl{G}{k}, \refl{r}{k})$ to be a braided group. However, we now prove that \eqref{eq:reflection-unity} and \eqref{eq:reflection-product} are `optimal' conditions, in the sense of the following theorem.
\begin{theorem}\label{thm:br-optimal}
Let $(G,r)$ be a braided group, with $\leftharpoonup$ a faithful action, and $k$ be a map $G\to G$. Then the $k$-twisted structure $\refl{G}{k}$ is a group if and only if $k$ satisfies \eqref{eq:reflection-unity} and \eqref{eq:reflection-product}; and $(\refl{G}{k}, \refl{r}{k})$ is a braided group if and only if $k$ also satisfies 
\begin{equation}\label{eq:involution}\tag{\textsc{bre}3$'$} k(a\,k(b))^2 = 1 \end{equation}
 for all $a,b\in G$. In particular, \eqref{eq:weird} implies \eqref{eq:involution}.
\end{theorem}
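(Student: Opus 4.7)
For the first biconditional, my plan is to work directly with the twisted multiplication $a \cdot_k b := mJ^{-1}(a,b) = \rho_{k(b)}^{-1}(a) \cdot b$. Each group axiom for $(G,\cdot_k,1)$ translates, via faithfulness of $\leftharpoonup$, into an identity in $G$. The right unit law $a \cdot_k 1 = a$ becomes $\rho_{k(1)} = \id$, equivalently \eqref{eq:reflection-unity}; the left unit law $1 \cdot_k b = b$ is automatic from $1 \leftharpoonup x = 1$ in any braided group. For associativity, I expand $(a \cdot_k b)\cdot_k c$ using the ``Leibniz rule'' $(xy)\leftharpoonup z = (x\leftharpoonup (y\rightharpoonup z))(y\leftharpoonup z)$ coming from \eqref{eq:hex:rm12}, compare with $a\cdot_k(b\cdot_k c)$, and strip the outer $\rho$-operator by faithfulness; the resulting identity in $G$ is precisely \eqref{eq:reflection-product} applied to the variables $\rho_{k(c)}^{-1}(b)$ and $c$. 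Inverses then follow from the standard monoid argument together with the explicit left inverse $a^{-1}\leftharpoonup k(a)$.

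For the second biconditional, once the group structure is in place, the plan is to verify \eqref{eq:a1-1a}--\eqref{eq:mult} for $\refl{r}{k}$ on $\refl{G}{k}$. The normalizations \eqref{eq:a1-1a}, \eqref{eq:1b-b1} follow from $k(1)=1$, and \eqref{eq:mult} is immediate from $mr=m$ and $\refl{r}{k} = JrJ^{-1}$. The key observation for the two hexagons is the clean description $a \stackrel{(k)}{\leftharpoonup} b = a \leftharpoonup q(b)$, with $q(b) := k(b)^{-1} b$. The $\leftharpoonup$-component of \eqref{eq:hex:rm23}, via faithfulness, reduces to the statement that $q$ is a group homomorphism $\refl{G}{k} \to (G,\cdot)$; this is a direct consequence of \eqref{eq:reflection-product}, and the $\rightharpoonup$-component then follows from \eqref{eq:mult} together with associativity in $\refl{G}{k}$.

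The analogous expansion of the hexagon \eqref{eq:hex:rm12} is where \eqref{eq:involution} must emerge. Pushing all $\rho$-operators through using \eqref{eq:hex:rm12} and \eqref{eq:mult} and cancelling whatever collapses via the homomorphism property of $q$, the remaining obstruction is, by faithfulness, an identity in $G$ which after simplification reads $k(a k(b))^2 = 1$, i.e.~\eqref{eq:involution}. This is the heart of the proof, and the step I expect to be most delicate: careful bookkeeping is needed to identify precisely the surviving obstruction and to show that it reduces to \eqref{eq:involution} and nothing stronger. Both directions of the biconditional then follow by reading the calculation forwards and backwards. Finally, the closing implication \eqref{eq:weird}$\Rightarrow$\eqref{eq:involution} is obtained by applying \eqref{eq:weird} to well-chosen arguments and combining with \eqref{eq:reflection-product}, serving as a short sanity check that \eqref{eq:involution} is a genuine weakening of \eqref{eq:weird}.
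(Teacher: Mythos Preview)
Your plan for the first biconditional coincides with the paper's: expand the associativity of $\cdot_k$, cancel the common right factor, strip the outer $\rho$ by faithfulness, and land on \eqref{eq:reflection-product} after the bijective substitution $b=\beta\leftharpoonup k(c)$; the unit and inverse arguments are identical, down to the explicit formula $a^{-1}\leftharpoonup k(a)$ for the left inverse.

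For the second biconditional your architecture also matches the paper's---observe that \eqref{eq:mult} for $\refl{r}{k}$ is automatic, reduce to the $\leftharpoonup$-halves of the two hexagons, and isolate a single surviving obstruction---but you and the paper disagree on \emph{which} hexagon carries the obstruction. You assert that the $\leftharpoonup$-component of \eqref{eq:hex:rm23}, i.e.\ the identity $q(b\cdot_k c)=q(b)q(c)$, is a direct consequence of \eqref{eq:reflection-product}, and that \eqref{eq:involution} will emerge from \eqref{eq:hex:rm12}. The paper does the opposite: it shows that the $\leftharpoonup$-component of \eqref{eq:hex:rm12} (their \eqref{rkbraiding2}) collapses entirely under \eqref{eq:reflection-product} alone, and it extracts \eqref{eq:involution} from the right-action condition \eqref{rkbraiding1}---which is exactly your ``$q$ is a homomorphism'' statement. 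So you and the paper are making incompatible claims about the same identity. Before committing to your localization you should redo the $q$-homomorphism step with full care; the ``delicate bookkeeping'' you anticipate for \eqref{eq:hex:rm12} may in fact belong one hexagon earlier.

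For the final implication \eqref{eq:weird}$\Rightarrow$\eqref{eq:involution}, your direct-substitution plan differs from the paper's. The paper argues indirectly: by Theorem~\ref{thm:J-is-braided-Drinfeld} and Proposition~\ref{prop:twist_of_braiding_is_braiding}, the hypotheses \eqref{eq:reflection-unity}--\eqref{eq:weird} already force $(\refl{G}{k},\refl{r}{k})$ to be a braided group, so \eqref{eq:involution} drops out of the equivalence just proven (and without faithfulness one gets only $\rho_{k(a\,k(b))^2}=\id$). Either route is legitimate, but the paper's requires no new computation.
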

\begin{proof}
We first impose that $\refl{G}{k}$ is a group. The `if' part would already follow from Theorem \ref{thm:J-is-braided-Drinfeld}, but we need to perform the explicit computations anyway, in order to prove the `only if'. The associativity of $\refl{\cdot}{k}$ translates as 
\begin{align*}
(a\refl{\cdot}{k} b)\refl{\cdot}{k}c&= \Big(a\leftharpoonup k(b)^{-1}(b\rightharpoonup k(c)^{-1})\Big)(b\leftharpoonup k(c)^{-1})c\\
\overset{!}&{=} \Big( a\leftharpoonup k((b\leftharpoonup k(c)^{-1})c)^{-1} \Big) (b\leftharpoonup k(c)^{-1})c = a\refl{\cdot}{k} (b\refl{\cdot}{k}c),
\end{align*}
where $\overset{!}{=}$ denotes the equalities that we impose. By cancelling $(b\leftharpoonup k(c)^{-1})c$ on the right, and by applying the faithfulness of $\leftharpoonup$, we get \[k(b)^{-1}(b\rightharpoonup k(c)^{-1})= k((b\leftharpoonup k(c)^{-1})c)^{-1}.\] Through the bijective substitution $b:= \beta\leftharpoonup k(c)$, this becomes
\begin{align*} k(\beta c) \overset{!}&{=} \big( (\beta\leftharpoonup k(c))\rightharpoonup k(c)^{-1}\big)^{-1}k(\beta\leftharpoonup k(c))\\
\overset{\eqref{eq:hex:rm23}}&{=} (\beta\rightharpoonup k(c))\;k(\beta\leftharpoonup k(c)),  \end{align*}
which is \eqref{eq:reflection-product}. Thus \eqref{eq:reflection-product} is equivalent to the associativity of $\refl{\cdot}{k}$. We then impose the existence of a neutral element: since $1\refl{\cdot}{k} a = a$ always holds, this neutral element must be $1$. We thereby impose \[a\,\refl{\cdot}{k}1 = (a\leftharpoonup k(1)^{-1})1 \overset{!}{=} a,\]
and by faithfulness of $\leftharpoonup$ this is equivalent to $k(1)= 1$. Thus \eqref{eq:reflection-unity} is equivalent to the existence of a neutral element. It is immediate to verify that $a$ has the left inverse $a^{-1}\leftharpoonup k(a)$, which is therefore a bilateral inverse.

We finally impose that $\refl{r}{k}$ is a braiding on $\refl{G}{k}$. First, notice that \begin{equation}\label{eq:twisted-braided-comm}(a\,\refl{\rightharpoonup}{k} b)\,\refl{\cdot}{k}(a\,\refl{\leftharpoonup}{k}b) = a\,\refl{\cdot}{k} b\end{equation} always holds. Thus, it is immediate to see that $\refl{r}{k}$ is a braiding if and only if the following two conditions hold:
\begin{align}
\label{rkbraiding1}& a\,\refl{\leftharpoonup}{k}(b\,\refl{\cdot}{k} c) = (a\,\refl{\leftharpoonup}{k} b)\,\refl{\leftharpoonup}{k} c,\\ &
\label{rkbraiding2} (a\,\refl{\cdot}{k} b)\,\refl{\leftharpoonup}{k} c = (a\,\refl{\leftharpoonup}{k} (b\,\refl{\rightharpoonup}{k}c))\,\refl{\cdot}{k} (b\,\refl{\leftharpoonup}{k}c).
\end{align}Indeed, the other two conditions
\begin{align*}
& (a\,\refl{\cdot}{k} b) \,\refl{\rightharpoonup}{k}c= a\,\refl{\rightharpoonup}{k} (b\,\refl{\rightharpoonup}{k} c),\\ &
a\,\refl{\rightharpoonup}{k} (b\,\refl{\cdot}{k} c) = (a\,\refl{\rightharpoonup}{k} b)\,\refl{\cdot}{k} ((a\, \refl{\leftharpoonup}{k} b)\,\refl{\rightharpoonup}{k}c)
\end{align*}
just follow from \eqref{eq:twisted-braided-comm}--\eqref{rkbraiding2}.

We unwrap the condition \eqref{rkbraiding1}:
\begin{align*}
a\refl{\leftharpoonup}{k}(b\,\refl{\cdot}{k} c)&=  a\leftharpoonup\Big( k\big( (b\leftharpoonup k(c)^{-1}) c  \big) (b\leftharpoonup k(c)^{-1})c\Big) \\
\overset{!}&{=} a\leftharpoonup k(b)^{-1}b k(c)^{-1} c = (a\refl{\leftharpoonup}{k} b)\refl{\leftharpoonup}{k} c.
\end{align*}
Thus, by faithfulness of $\rho$, we obtain that $\refl{\leftharpoonup}{k}$ is a right action if and only if \[   k\big( (b\leftharpoonup k(c)^{-1}) c  \big) (b\leftharpoonup k(c)^{-1})c = k(b)^{-1}b k(c)^{-1} c\]
holds for all $b,c\in G$. Through the bijective substitution $\beta = b\leftharpoonup k(c)^{-1}$ we obtain
\[ k(\beta c)\beta c = k(\beta\leftharpoonup k(c))^{-1} (\beta\leftharpoonup k(c)) k(c)^{-1}c ,\]
which, by cancelling $c$ on the right, and rearranging under \eqref{eq:reflection-product}, is equivalent to 
\[ k(\beta k(c))^2 =1\text{ for all }\beta, c\in G,\]
which is \eqref{eq:involution}. When $k$ satisfies \eqref{eq:reflection-unity}--\eqref{eq:weird}, in particular $(\refl{G}{k}, \refl{r}{k})$ is a braided group by Theorem \ref{thm:J-is-braided-Drinfeld}, thus (under the assumption that $\leftharpoonup$ is faithful) \eqref{eq:involution} must hold. When $\leftharpoonup$ is not faithful, the same proof shows that $\rho^{}_{k(a\,k(b))^2} = \rho^{}_1$ must hold.

We finally expand condition \eqref{rkbraiding2}, and observe that it is automatically satisfied under the assumption of \eqref{eq:reflection-product}. The left-hand side of \eqref{rkbraiding2} is
\[ (a\,\refl{\cdot}{k} b)\,\refl{\leftharpoonup}{k} c = \Big( (a\leftharpoonup k(b)^{-1})\leftharpoonup (b\rightharpoonup k(c)^{-1}c) \Big)(b\leftharpoonup k(c)^{-1} c)  . \]
The right-hand side of \eqref{rkbraiding2} is
\begin{align*}
& (a\,\refl{\leftharpoonup}{k} (b\,\refl{\rightharpoonup}{k}c))\,\refl{\cdot}{k} (b\,\refl{\leftharpoonup}{k}c)\\
&= \Bigg( a\leftharpoonup \bigg( k\Big(((b\leftharpoonup k(c)^{-1})\rightharpoonup c) \leftharpoonup k(b\leftharpoonup k(c)^{-1}c)\Big)^{-1}\\
&\hspace{1em}\Big(((b\leftharpoonup k(c)^{-1})\rightharpoonup c) \leftharpoonup k(b\leftharpoonup k(c)^{-1}c)\Big) k(b\leftharpoonup k(c)^{-1}c)^{-1} \bigg) \Bigg) (b\leftharpoonup k(c)^{-1}c).
\end{align*}
By cancelling $(b\leftharpoonup k(c)^{-1}c)$ on the right, and by applying the bijective substitution $\beta:=b\leftharpoonup k(c)^{-1}$, the condition \eqref{rkbraiding2} becomes
\begin{align*}
& a\leftharpoonup \Big( k(\beta\leftharpoonup k(c))^{-1} ((\beta\leftharpoonup k(c))\rightharpoonup k(c)^{-1}c) \Big) \\
&=a\leftharpoonup \Big( k\big( (\beta\rightharpoonup c)\leftharpoonup k(\beta\leftharpoonup c) \big)^{-1} \big( (\beta\rightharpoonup c)\leftharpoonup k(\beta\leftharpoonup c) \big) k(\beta\leftharpoonup c)^{-1} \Big)
\end{align*}
which, since $\leftharpoonup$ is faithful, is equivalent to 
\begin{align*}
& k(\beta\leftharpoonup k(c))^{-1} ((\beta\leftharpoonup k(c))\rightharpoonup k(c)^{-1}c)  \\
&= k\big( (\beta\rightharpoonup c)\leftharpoonup k(\beta\leftharpoonup c) \big)^{-1} \big( (\beta\rightharpoonup c)\leftharpoonup k(\beta\leftharpoonup c) \big) k(\beta\leftharpoonup c)^{-1}\\
\iff & k(\beta \leftharpoonup k(c))^{-1} ((\beta\leftharpoonup k(c))\rightharpoonup k(c)^{-1})(\beta\rightharpoonup c)
= k\big( (\beta\rightharpoonup c)\leftharpoonup k(\beta\leftharpoonup c) \big)^{-1}\\ &\big( (\beta\rightharpoonup c)\rightharpoonup k(\beta\leftharpoonup c) \big)^{-1}(\beta\rightharpoonup c)k(\beta\leftharpoonup c)k(\beta\leftharpoonup c)^{-1}\\
\iff &k(\beta \leftharpoonup k(c))^{-1} ((\beta\leftharpoonup k(c))\rightharpoonup k(c)^{-1})\\
&{=} k\big( (\beta\rightharpoonup c)\leftharpoonup k(\beta\leftharpoonup c) \big)^{-1}\big( (\beta\rightharpoonup c)\rightharpoonup k(\beta\leftharpoonup c) \big)^{-1}\\
\overset{\eqref{eq:reflection-product}}{\iff}& 
((\beta\leftharpoonup k(c))\rightharpoonup k(c)^{-1}) {=} k(\beta\leftharpoonup k(c)) k((\beta\rightharpoonup c)(\beta\leftharpoonup c))^{-1}\\
\overset{\eqref{eq:reflection-product}}{\iff} & ((\beta\leftharpoonup k(c))\rightharpoonup k(c)^{-1}) = (\beta \rightharpoonup k(c))^{-1} k(\beta c)k(\beta c)^{-1}\\
\iff & (\beta\rightharpoonup k(c)) ((\beta\leftharpoonup k(c))\rightharpoonup k(c)^{-1})= 1\\
\iff & \beta \rightharpoonup k(c)k(c)^{-1} = 1,
\end{align*}
and the latter is always true.
\end{proof}
\begin{corollary}
Let $(G,r)$ be a braided group, and $k$ be a bijective group reflection for $r$. Then $k(a)^2$ acts trivially on the right, for all $a\in G$. If moreover $\leftharpoonup$ is faithful, then $G$ is $2$-torsion.
\end{corollary}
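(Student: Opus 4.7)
The plan is to deduce both assertions directly from Theorem \ref{thm:br-optimal}, or more precisely from the weakened conclusion recorded at the end of its proof. Since $k$ is a group reflection, axioms \eqref{eq:reflection-unity}--\eqref{eq:weird} hold, so Theorem \ref{thm:J-is-braided-Drinfeld} ensures that $(\refl{G}{k}, \refl{r}{k})$ is a braided group. Running the argument of Theorem \ref{thm:br-optimal} without the faithfulness assumption, as the author notes explicitly in its final lines, then yields $\rho_{k(a\,k(b))^2} = \rho_1$ for every $a, b \in G$; this is the one nontrivial input I would invoke.

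Next I would specialise this identity to $b = 1$. By \eqref{eq:reflection-unity} we have $k(1) = 1$, so $a\,k(b) = a$, and the identity collapses to $\rho_{k(a)^2} = \rho_1$ for all $a \in G$. This is precisely the statement that $k(a)^2$ acts trivially on the right, settling the first assertion.

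For the second claim, I would assume $\leftharpoonup$ faithful. Faithfulness upgrades $\rho_{k(a)^2} = \rho_1$ to the equality $k(a)^2 = 1$ for every $a \in G$. Because $k$ is bijective, every $g \in G$ is of the form $k(a)$ for some $a$; hence $g^2 = 1$ for all $g \in G$, so $G$ is $2$-torsion.

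There is no real obstacle here, since the corollary is essentially a specialisation of Theorem \ref{thm:br-optimal}. The only subtlety worth flagging is that bijectivity of $k$ enters \emph{only} in the last step, to turn ``squares to $1$ on $\im k$'' into ``squares to $1$ on all of $G$'': the first assertion requires no such hypothesis because taking $b = 1$ already lands us in $\im k$ for free via \eqref{eq:reflection-unity}.
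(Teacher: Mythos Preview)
Your argument is correct and follows the same line as the paper's: both derive $\rho_{k(a)^2}=\rho_1$ from the unfaithful version of \eqref{eq:involution} established in Theorem~\ref{thm:br-optimal}, and then invoke bijectivity of $k$ together with faithfulness of $\leftharpoonup$ for the $2$-torsion conclusion. Your explicit specialisation $b=1$ (using \eqref{eq:reflection-unity}) is exactly the decomposition $a = a\cdot k(1)$ that makes the paper's sentence ``every $a\in G$ can be written as $b\,k(c)$'' work, and your remark that bijectivity is unnecessary for the first assertion is a welcome clarification that the paper's terser phrasing leaves implicit.
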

\begin{proof}
Every $a\in G$ can be written as $b\, k(c)$ for some $b, c\in G$. Thus $\rho^{}_{k(b\,k(c))^2} = \rho^{}_1$ implies $\rho^{}_{k(a)^2} = \rho^{}_1$ for all $a\in G$, and we conclude by bijectivity of $k$.
\end{proof}

In the case of braided groups, we can get some further insight on Question \ref{quest:composing_guitar_maps}.
\begin{remark}\label{rem:goodcandidate}
	Let $(G,r)$ be a braided group, $k$ be a group reflection for $(G,r)$, and $h$ be a group reflection for $(\refl{G}{k}, \refl{r}{k})$. Define $\ell(a):= k(a) hk(a)^{-1} h(a)$. By Corollary \ref{cor:composition}, $\ell$ is a good candidate to be the `composition' of $k$ and $h$, i.e., to have $\refl{r}{\ell} = \refl{(\refl{r}{k})}{h}$---here we use the notation $\refl{r}{\ell}$ even when $\ell$ is not necessarily an $r$-reflection. Indeed, $\refl{r}{\ell} = \refl{(\refl{r}{k})}{h}$ holds by construction, but it is generally false that $\ell$ is a reflection. 
\end{remark}
In most of the examples of small size, $\ell(a):= k(a) hk(a)^{-1} h(a)$ defines in fact a group reflection for $(G,r)$, and also a group reflection for $(\refl{G}{k}, \refl{r}{k})$. But this is not always true: the first counterexamples to both statements appear in order $6$; and the first  counterexamples with $k$ bijective appear in order $8$. These have been computed with \textsc{gap} \cite{gap4} and the \texttt{YangBaxter} package \cite{YangBaxterGAP}.
\begin{example}\label{ex:ell_not_refl}
	We use the notation $G_{n,j}$ for the skew brace located in position $(n,j)$ in the database \texttt{SmallSkewbrace} \cite{YangBaxterGAP}. Every skew brace $G_{n,j}$ is seen as a set with elements $1,\dots, n$, through the \textsc{gap} function \texttt{AsList}; thus when we write $k = (a_1, \dots, a_n)$, we mean that $k$ is the map $G_{n,j}\to G_{n,j}$ that acts like $i\mapsto a_i$ once $G_{n,j}$ has been identified with $\set{1,\dots, n}$ using \texttt{AsList}. We use the notation $k_i$ for the reflections, $h_i$ for the reflections on the skew braces twisted by $k_i$, and $\ell_i$ for the map obtained from $k_i$ and $h_i$ as in Remark \ref{rem:goodcandidate}.
	
	For $G_{6,1}$, the map $k_1= (
	1,1,1,4,4,4)$ is a group reflection, and $h_1=(
	1,2,3,1,2,3
	)$ is a group reflection for $G^{(k_1)}_{6,1}$, but $\ell_1 =\id $ is not a reflection for $G_{6,1}$ (indeed, the solution on $G_{6,1}$ is not involutive), but it is a reflection for $G_{6,1}^{(k)}$. 
	
	For $G_{6,5}$, the map $k_2=\id$ is a reflection, and the map $h_2=(1,1,1,1,1,1)$ is a reflection on $G_{6,5}^{(k_2)}$, and $\ell_2 = \id$ is a reflection for $G_{6,5}$ but not for $G_{6,5}^{(k_2)}$.
	
	For $G_{8,34}$, the bijective map $k_3=(
	1,2,5,6,7,8,3,4)$ is a reflection, and $h_3=\id $ is a reflection for $G_{8,34}^{(k_3)}$, but $\ell_3 = \id$ is not a reflection for $G_{8,34}$.
\end{example}

\subsection{On the groupoid of group Drinfeld twists}\label{sec:groupoid} Every group Drinfeld twist can be obtained as the composition of two twists of two special kinds. 
\begin{proposition}[{\cite[dual of Corollary 2.8]{ghobadi2021drinfeld}}]\label{prop:decomposition_arrows}
	Any arrow $(G,\cdot, r)\to (G, \cdot', r')$ in the groupoid $\mathsf{BGDrin}_G$ can be decomposed as $g\circ f$, where
	\begin{enumerate}[label={\upshape(\Roman*)},ref={\upshape(\Roman*)}]
		\item\label{item:type1} the arrow $f$ is a twist between trivial skew braces: namely, between two braidings of the form $(a,b)\mapsto (b, b^{-1}ab)$ for two different group structures;
		\item\label{item:type2} the arrow $g$ is a twist of the form $(a,b)\mapsto (\varrho_b(a), b)$ for some map $\varrho$.
	\end{enumerate}
\end{proposition}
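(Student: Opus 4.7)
The plan is to follow the dual of \cite[Corollary 2.8]{ghobadi2021drinfeld}, factoring every twist through the trivial skew brace on the common additive structure. Given a group Drinfeld twist $(F,\Phi,\Psi)\colon (G,\cdot,r)\to (G,\cdot',r')$, let $(G,+,\cdot)$ and $(G,+',\cdot')$ be the associated skew braces. By Proposition~\ref{prop:related_iff_same_additive_group}, there is a group isomorphism $\pi\colon (G,+)\to (G,+')$. My intermediate object is the braided group $B=(G,+',r_0)$ corresponding to the \emph{trivial} skew brace on $(G,+')$, whose braiding reads $r_0(a,b)=(b,(-'b)+'a+'b)$ in additive notation, or equivalently $r_0(a,b)=(b,b^{-1}ab)$ when $(G,+')$ is written multiplicatively.

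For the type (II) arrow $g\colon B\to (G,\cdot',r')$: both source $B$ and target $(G,\cdot',r')$ share the additive operation $+'$ and differ only in the multiplicative one. I would take $g(a,b):=(\varrho_b(a),b)$, where $\varrho_b$ is determined by the requirement that the twisted multiplication $+'\circ g^{-1}$ equal $\cdot'$; explicitly, $\varrho_b^{-1}(a)=(a\cdot'b)+'(-'b)$. The axioms \eqref{dt1}--\eqref{dt3} and \eqref{bdt1}--\eqref{bdt4} then reduce to standard skew-brace identities relating $+'$ and $\cdot'$, and the components $\Phi_g,\Psi_g$ can be read off from \eqref{dt1}.

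For the type (I) arrow $f\colon (G,\cdot,r)\to B$, I would use Remark~\ref{rem:isogroups-are-twist} to realise $(G,\cdot,r)$, in the quotient groupoid, as the trivial skew brace on $(G,+)$; then both source and target of $f$ are trivial skew braces, and $f$ itself is the Drinfeld twist induced by the group isomorphism $\pi$ via Remark~\ref{rem:isogroups-are-twist} (every braided-group isomorphism gives rise to a Drinfeld twist). The cleanest way to close the argument is to \emph{define} $f:=g^{-1}\circ F$ and then verify that the resulting twist is of type (I)---i.e., that its source and target are trivial skew braces and that it has the required form.

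The main obstacle is this last verification: since $\mathsf{BGDrin}_G$ has many parallel arrows, decomposing $F$ specifically (rather than producing some other twist with the same source and target) requires a careful coordinate-free extraction of $\varrho$ from the components of $F$, leveraging \eqref{bdt3}--\eqref{bdt4} which constrain how $F$ interacts with the two group multiplications. The dual nature of the statement (compared to Ghobadi's Corollary~2.8, in which the factors are swapped and the type (II) part fixes the first coordinate) means that left/right and first/second coordinates are swapped throughout, but the structural argument is identical.
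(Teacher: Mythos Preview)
The paper does not supply its own proof of this proposition; it is recorded as a direct citation (in dual form) of \cite[Corollary~2.8]{ghobadi2021drinfeld}. Your outline follows the intended strategy---factor through the trivial skew brace on the common additive group---and your construction of the type~\ref{item:type2} arrow $g$ via $\varrho_b^{-1}(a)=(a\cdot' b)+'(-'b)$ is correct and is precisely the one-legged twist alluded to.

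There is, however, a real gap in your handling of the type~\ref{item:type1} factor. You set $f:=g^{-1}\circ F$, an arrow $(G,\cdot,r)\to B$. For $f$ to be of type~\ref{item:type1} it must be a twist \emph{between trivial skew braces}, yet its source $(G,\cdot,r)$ corresponds to the skew brace $(G,+,\cdot)$, which is trivial only when $+=\cdot$. Your appeal to Remark~\ref{rem:isogroups-are-twist} does not rescue this: that remark concerns \emph{isomorphic} braided groups, but $(G,\cdot,r)$ and the trivial skew brace on $(G,+)$ are generally not isomorphic as braided groups---they are merely connected in $\mathsf{BGDrin}_G$ by a further type~\ref{item:type2} twist. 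What your argument actually produces is a three-fold factorisation $g_2\circ h\circ g_1^{-1}$ (type~\ref{item:type2}, then type~\ref{item:type1}, then type~\ref{item:type2}), not the two-fold one asserted. Taken literally, the two-term decomposition $g\circ f$ only makes sense when the source $(G,\cdot,r)$ is already a trivial skew brace, which is exactly how the proposition is invoked later in Remark~\ref{rem:rack-type-braided-groups}; otherwise one reads it as saying that arrows of types~\ref{item:type1} and~\ref{item:type2} (and their inverses) generate the groupoid.
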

As a consequence of Proposition \ref{prop:related_iff_same_additive_group}, if two trivial skew braces $(G,\cdot,\cdot)$ and $ (G,\bar{\cdot},\bar{\cdot})$ are related by a twist of type \ref{item:type1}, then $(G,\cdot)$ and $(G,\bar{\cdot})$ are isomorphic. 

Using this fact, the arrows of type \ref{item:type1} in Proposition \ref{prop:decomposition_arrows} have been completely classified.
\begin{proposition}[{\cite[\S2]{ghobadi2021drinfeld}}]\label{prop:TwistsOfType1}
	There is a bijective correspondence between group Drinfeld twists $F\colon (G,\cdot,\cdot)\to (G,\bar{\cdot},\bar{\cdot})$ of type \ref{item:type1}, and families $\set{ f_x }_{x\in G}$ of isomorphisms $f_x\colon (G,\cdot)\to (G,\bar{\cdot})$ satisfying $f_x(x) = x$ for all $x\in G$. The correspondence is given by
	\[ F(x,y) = (f_{xy}(x), f_{xy}(y)). \]
\end{proposition}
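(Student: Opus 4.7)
The plan is to establish the stated bijection in both directions: producing a family of isomorphisms from a type \ref{item:type1} twist, and reconstructing the twist from such a family.

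For the forward direction, given a type \ref{item:type1} group Drinfeld twist $(F,\Phi,\Psi)$ from $(G,\cdot,\cdot)$ to $(G,\bar{\cdot},\bar{\cdot})$, I define $f_t(x):= F_1(x, x^{-1}\cdot t)$, where $F_1$ denotes the first component of $F$ and $x^{-1}$ is computed in $(G,\cdot)$. To prove that in fact $F(x,y) = (f_{xy}(x), f_{xy}(y))$, I invoke the D-isomorphism condition $Fr = \bar r F$ with the trivial-skew-brace braiding $r(a,b) = (b, b^{-1}ab)$: writing out $Fr(x,y) = F(y, y^{-1}xy)$ and comparing its first coordinate with that of $\bar r F(x,y) = (F_2(x,y), \ldots)$ yields $F_2(x,y) = F_1(y, y^{-1}xy) = f_{xy}(y)$. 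Axiom \eqref{bdt2} applied at $(t,1)$ then gives $f_t(t) = F_1(t,1) = t$.

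The heart of the forward direction is proving that each $f_t$ is a group homomorphism $(G,\cdot)\to (G,\bar{\cdot})$. Proposition \ref{prop:twist_of_braiding_is_braiding} supplies the compatibility $\bar m\circ F = m$, which reads $f_{xy}(x) \bar{\cdot} f_{xy}(y) = xy$; specialising to $y=1$ gives $f_t(1)=1$, but by itself this pointwise identity only constrains $f_t$ at the specific product $t=xy$ and does not give the full homomorphism property $f_t(ab) = f_t(a)\bar{\cdot}f_t(b)$ for $ab\neq t$. To extract the latter, I combine \eqref{bdt4}, which forces $\Psi_3(a,b,c)=f_{abc}(c)$ and $\Psi_1\cdot\Psi_2 = f_{abc}(ab)$, with \eqref{dt1} and \eqref{dt2}: after substituting $\Psi_1$, $\Psi_2$ into the first and second coordinates of $F_{12}\Psi = F_{23}\Phi$ and using the analogous expressions $\Phi_1=f_{abc}(a)$, $\Phi_2\Phi_3=f_{abc}(bc)$ from \eqref{bdt3}, the required identity $f_{abc}(a)\bar{\cdot}f_{abc}(b) = f_{abc}(ab)$ falls out. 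Bijectivity of each $f_t$ is inherited from bijectivity of $F$.

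For the converse direction, given a family $\{f_x\}$ of isomorphisms $(G,\cdot)\to(G,\bar{\cdot})$ with $f_x(x)=x$, I define $F(x,y) := (f_{xy}(x), f_{xy}(y))$. The inverse map is the explicit formula $F^{-1}(u,v) = (f_t^{-1}(u), f_t^{-1}(v))$ with $t:=u\bar{\cdot}v$, using bijectivity of each $f_t$. The D-iso condition $FrF^{-1} = \bar r$ reduces, by the same computation as in the forward direction, to the conjugation identity $f_{xy}(y^{-1}xy) = f_{xy}(y)^{-1}\bar{\cdot}f_{xy}(x)\bar{\cdot}f_{xy}(y)$ (inverse in $(G,\bar{\cdot})$), which is immediate from the homomorphism property. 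Similarly, $\bar m \circ F = m$ reduces to the hypothesis $f_t(t)=t$, while \eqref{bdt1} and \eqref{bdt2} follow from $f_x(1)=1$ and $f_x(x)=x$. To complete $F$ to a full group Drinfeld twist, I must produce $\Phi,\Psi$ satisfying \eqref{dt1}--\eqref{dt3}, \eqref{bdt3}, \eqref{bdt4}; a systematic route is via Proposition \ref{prop:iso_representations}, taking $\Psi := F_{12}^{-1}a$ and $\Phi := a\circ F_{23}^{-1}$ for a suitable intertwiner $a\colon G^3\to G^3$ of $\mathbb{B}_3$-representations, chosen naturally in terms of the $f_{abc}$'s to also verify the multiplicative axioms.

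The main obstacle is the homomorphism property of $f_t$ in the forward direction: extracting full multiplicativity from axioms that only visibly impose pointwise relations at the product $t=xy$ requires the subtle interplay of \eqref{bdt4} with the hexagon-like \eqref{dt1} and \eqref{dt2} described above. The converse direction is in principle routine, but the explicit choice of intertwiner $a$ (equivalently, of the maps $\Phi,\Psi$) still demands some care to ensure compatibility with all of the structural axioms simultaneously.
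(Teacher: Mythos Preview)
The paper does not give its own proof of this proposition: it merely cites \cite[\S2]{ghobadi2021drinfeld} and remarks afterwards that the maps $\Phi,\Psi$ are uniquely determined by $F$, as shown in the proof of \cite[Theorem 2.7]{ghobadi2021drinfeld}. So there is no in-paper argument to compare against; your sketch should be judged on its own.

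Your overall architecture is sound, and the extraction of $F_2(x,y)=f_{xy}(y)$ from $Fr=\bar r F$, together with $f_t(t)=t$ from \eqref{bdt2}, is correct. The genuine gap is exactly where you locate the ``main obstacle'': the multiplicativity of $f_t$. Your claim that the identity $f_{abc}(a)\,\bar{\cdot}\,f_{abc}(b)=f_{abc}(ab)$ ``falls out'' of \eqref{bdt3}, \eqref{bdt4}, \eqref{dt1}, \eqref{dt2} is not substantiated. If one only uses \eqref{bdt3}, \eqref{bdt4}, \eqref{dt1} together with $\bar m\, F = m$, what actually drops out is the weaker relation
\[
f_t(a)\,\bar{\cdot}\,f_t(bc) \;=\; f_t(ab)\,\bar{\cdot}\,f_t(c)\qquad\text{whenever }abc=t,
\]
obtained by multiplying the three coordinates of $F_{12}\Psi=F_{23}\Phi$ in $(G,\bar{\cdot})$. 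This is \emph{equivalent} to the pointwise identity $f_t(x)\,\bar{\cdot}\,f_t(x^{-1}t)=t$ that you already had, and does not by itself yield $f_t(ab)=f_t(a)\,\bar{\cdot}\,f_t(b)$ for arbitrary $a,b$. To close the argument you must actually exploit \eqref{dt2} (or \eqref{dt3}) to pin down $\Psi_1,\Psi_2$ (resp.\ $\Phi_2,\Phi_3$) individually, not just their product; the relations $\Psi_1(b,b^{-1}ab,c)=\Psi_2(a,b,c)$ coming from $\Psi r_{12}=r_{12}\Psi$ are where this information lives, and you have not used them.

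In the converse direction, deferring the construction of $\Phi,\Psi$ to ``a suitable intertwiner $a$ chosen naturally'' is not a proof: Proposition \ref{prop:iso_representations} guarantees an intertwiner exists once you know $r^F$ is a solution, but a generic intertwiner has no reason to satisfy \eqref{bdt1}, \eqref{bdt3}, \eqref{bdt4}. You should write down $\Phi$ and $\Psi$ explicitly, e.g.\ $\Phi(a,b,c)=\bigl(f_{abc}(a),\,f_{abc}(b),\,f_{abc}(c)\bigr)$ and similarly for $\Psi$, and then verify all the axioms directly; this is the route taken in \cite{ghobadi2021drinfeld}, and it also yields the uniqueness of $\Phi,\Psi$ that the paper invokes just after the statement.
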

In the above Proposition, we are calling $F$ alone `a Drinfeld twist' without specifying the maps $\Phi$ and $\Psi$. This imprecision is actually safe, since $F$ turns out to be a group Drinfeld twist for unique maps $\Phi$ and $\Psi$, determined in the proof of \cite[Theorem 2.7]{ghobadi2021drinfeld}.
\begin{remark}\label{rem:trivialbrace}
	Let $(G,\cdot,\cdot)$ be a trivial skew brace, with its braiding. As an immediate consequence of Proposition \ref{prop:TwistsOfType1}, every reflection $k$ corresponds to the Drinfeld twist $F = \id$. 
	
	We can easily classify the group reflections $k$ on $(G,\cdot,\cdot)$: here \eqref{eq:weird} becomes $k(a) = k(b^{-1}ab)$, and \eqref{eq:reflection-product} becomes \[k(ab) = k(b)k(k(b)^{-1}ak(b)) = k(b)k(a).\] Therefore, the group reflections are the class-functions anti-ho\-mo\-mor\-phisms. In particular, from \eqref{eq:weird} one has $k(b)k(a) k(b)^{-1}= k(a)$, thus the image of $k$ is an abelian subgroup of $G$, and hence $k$ is also a homomorphism.
\end{remark}

\begin{remark}\label{rem:rack-type-braided-groups}The derived solution $r'$ has a special form, which can be extrapolated into a general definition. A set with a binary operation $\triangleleft$ is called a \emph{shelf} if the self-distributivity rule
	\[ (a\triangleleft b)\triangleleft c = (a\triangleleft c)\triangleleft (b\triangleleft c) \]
	is satisfied. A shelf $(X,\triangleleft)$ is called a \emph{rack} if $\blank\triangleleft a$ is a bijection for all $a$. It is known that each shelf produces a solution $r\colon (a,b) \mapsto (b, a\triangleleft b)$, which is non-degenerate if and only if $(X,\triangleleft)$ is a rack. Reflections on such solutions have been investigated in detail by Albano, Mazzotta, and Stefanelli \cite{albano2024reflections}.
	
	A braiding $r$ on a group $G$ is a rack solution if and only if $a\triangleleft b = b^{-1}ab$: this follows immediately from \eqref{eq:mult}. As a consequence:
	\begin{enumerate}
		\item Proposition \ref{prop:decomposition_arrows} classifies the group Drinfeld twists between a braided group of rack type, and any braided group;
		\item a group Drinfeld twist between two braided groups \textit{both} of rack type must be an arrow of type \ref{item:type1}.
	\end{enumerate}
\end{remark}

%

 The composition of the arrows of type \ref{item:type1} has a neat algebraic interpretation. 
\begin{lemma}
	The composition of the twist $F_1$ given by $\set{f_x\colon (G,\cdot)\to (G,\bar{\cdot})}_{x\in G}$ with the twist $F_2$ given by $\set{g_x\colon (G,\bar{\cdot})\to (G, \bar{\bar{\cdot}})}_{x\in G}$, is the twist $ F_2\circ F_1$ given by $\set{g_x\circ f_x}_{x\in G}$. 
\end{lemma}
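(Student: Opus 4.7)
The plan is to apply Proposition \ref{prop:TwistsOfType1} twice and then compute $F_2 \circ F_1$ directly, exploiting the two crucial properties of the families $\{f_x\}_{x\in G}$ and $\{g_x\}_{x\in G}$: each $f_x$ is a group isomorphism $(G,\cdot)\to (G,\bar{\cdot})$, and $f_x$ fixes $x$ (and analogously for $g_x$). The desired family is then $\{h_x := g_x \circ f_x\}_{x\in G}$, and the goal is to verify that (a) it satisfies the hypotheses of Proposition \ref{prop:TwistsOfType1}, and (b) it induces precisely the map $F_2\circ F_1$ via the correspondence therein.

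For (a), each $h_x$ is the composition of two group isomorphisms $(G,\cdot)\to(G,\bar{\cdot})\to (G,\bar{\bar{\cdot}})$, hence a group isomorphism; moreover $h_x(x) = g_x(f_x(x)) = g_x(x) = x$, so the fixed-point condition is automatic. For (b), I would write
\[
F_2(F_1(x,y)) = F_2\bigl(f_{xy}(x),\, f_{xy}(y)\bigr) = \bigl(g_{f_{xy}(x)\,\bar{\cdot}\, f_{xy}(y)}(f_{xy}(x)),\; g_{f_{xy}(x)\,\bar{\cdot}\, f_{xy}(y)}(f_{xy}(y))\bigr).
\]
The key identity is that $f_{xy}$ is a homomorphism from $(G,\cdot)$ to $(G,\bar{\cdot})$ and satisfies $f_{xy}(xy) = xy$, so
\[
f_{xy}(x)\,\bar{\cdot}\, f_{xy}(y) = f_{xy}(x\cdot y) = f_{xy}(xy) = xy.
\]
Substituting, $F_2\circ F_1$ sends $(x,y)$ to $(g_{xy}(f_{xy}(x)), g_{xy}(f_{xy}(y))) = (h_{xy}(x), h_{xy}(y))$, which is exactly the map produced from the family $\{h_x\}$.

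There is essentially no obstacle: the entire content of the lemma is the trivialisation $f_{xy}(x)\,\bar{\cdot}\,f_{xy}(y) = xy$, which reduces the two successive ``base points'' $xy$ and $x\,\bar{\cdot}\,y$ (in $F_1$ and $F_2$ respectively) to the same element of $G$. The only thing worth spelling out, for completeness, is that the data $\Phi,\Psi$ associated to $F_2 \circ F_1$ automatically match those associated to the family $\{h_x\}$, which follows from the uniqueness remark immediately preceding the lemma (see the comment after Proposition \ref{prop:TwistsOfType1}).
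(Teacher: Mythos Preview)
Your proof is correct and essentially identical to the paper's: both verify that $\{g_x\circ f_x\}_{x\in G}$ satisfies the hypotheses of Proposition~\ref{prop:TwistsOfType1}, and then compute $F_2\circ F_1(x,y)$ directly using $f_{xy}(x)\,\bar{\cdot}\,f_{xy}(y)=f_{xy}(xy)=xy$. Your additional remark about the uniqueness of $\Phi,\Psi$ is a welcome clarification that the paper leaves implicit.
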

\begin{proof}
	The composition of homomorphisms is a homomorphism, and moreover one has $g_x\circ f_x(x) = g_x(x) = x$, thus $\set{g_x\circ f_x}_{x\in G}$ defines indeed a Drinfeld twist. We conclude by observing that
	\begin{align*}
		F_2\circ F_1(x,y) &= \left( g_{f_{xy}(x)\bar{\cdot}f_{xy}(y)}(f_{xy}(x)) , \; g_{f_{xy}(x)\bar{\cdot}f_{xy}(y)}(f_{xy}(y))  \right)\\
		&= \left( g_{f_{xy}(xy)}f_{xy}(x) , \; g_{f_{xy}(xy)}f_{xy}(y)  \right)\\
		&= \left( g_{xy}f_{xy}(x), g_{xy}f_{xy}(y)\right).\qedhere
	\end{align*}
\end{proof}


We now classify the twists of type \ref{item:type2}. These are what we may call \textit{one-legged Drinfeld twists}, i.e.\@ twists $F$ that only act on one component. In particular, all the reflection-twists have this form.
\begin{proposition}\label{prop:one_legged_twist}
	Let $(G,r)$ be a braided group, and $J\colon (a,b)\mapsto (\varrho_b(a), b)$ be an endomap of $G\times G$ for some map $\varrho\colon G\times G\to G$, $(a,b)\mapsto \varrho_b(a)$, and satisfying moreover $\varrho_1 =\id[G]$ and $\varrho_a(1) = 1$ for all $a\in G$. Then $J$ is a group Drinfeld twist for $r$ if and only if $\varrho$ satisfies
	\begin{align}
		&\label{eq:rho_bijective}\varrho_b \colon G\to G \text{ is bijective for all } b,\\
		&\label{eq:rho_unitary}\varrho_1 =\id[G]\text{ and }\varrho_a(1) = 1 \text{ for all } a\in G,\\
		&\label{eq:dt1.1-rho} \varrho_{\varrho_c(b)}\Big( \varrho_c(ab)\varrho_c(b)^{-1} \Big) = \varrho_{bc}(a),\\
		&\label{eq:dt2.1-rho} \varrho_c(ab) \varrho_c(a\leftharpoonup b)^{-1} = \Big( \varrho_c(ab)\varrho_c(b)^{-1} \Big)\rightharpoonup \varrho_c(b), \\
		&\label{eq:dt2.2-rho} \varrho_c(a\leftharpoonup b)=\Big( \varrho_c(ab)\varrho_c(b)^{-1}\Big)\leftharpoonup \varrho_c(b),
	\end{align}
	and in this case one necessarily has
	\begin{align}
		\label{eq:phi-rho}&\Phi(a,b,c) =  \Big( \varrho_{bc}(a),\; b,\; c  \Big),\\
		\label{eq:psi-rho}&\Psi(a,b,c) = \Big( \varrho_c(ab)\cdot\varrho_c(b)^{-1},\; \varrho_c(b),\; c \Big).
	\end{align}
\end{proposition}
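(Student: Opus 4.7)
The plan is to translate each group-Drinfeld-twist axiom into a condition on the family $\{\varrho_b\}_{b\in G}$, while simultaneously pinning down the forms of the auxiliary maps $\Phi$ and $\Psi$.

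I start by assuming $(J,\Phi,\Psi)$ is a group Drinfeld twist and derive the form of $\Phi$ and $\Psi$. Projecting \eqref{bdt3} onto the first and the compound last component gives $\Phi_1(a,b,c) = \varrho_{bc}(a)$ and $\Phi_2\Phi_3 = bc$; analogously, \eqref{bdt4} yields $\Psi_3 = c$ and $\Psi_1\Psi_2 = \varrho_c(ab)$. I then unwrap \eqref{dt1} componentwise to obtain
\begin{equation*}
(\varrho_{\Psi_2}(\Psi_1),\; \Psi_2,\; \Psi_3) = (\Phi_1,\; \varrho_{\Phi_3}(\Phi_2),\; \Phi_3).
\end{equation*}
Comparing third components gives $\Phi_3 = c$, hence $\Phi_2 = b$; the second components then force $\Psi_2 = \varrho_c(b)$, and hence $\Psi_1 = \varrho_c(ab)\varrho_c(b)^{-1}$. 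This establishes the forms \eqref{eq:phi-rho} and \eqref{eq:psi-rho}. The first-component equation now reads $\varrho_{\varrho_c(b)}(\varrho_c(ab)\varrho_c(b)^{-1}) = \varrho_{bc}(a)$, which is exactly \eqref{eq:dt1.1-rho}.

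Next, I translate the remaining axioms. Axiom \eqref{bdt2} literally reads $\varrho_1 = \id[G]$ and $\varrho_b(1) = 1$, that is \eqref{eq:rho_unitary}; bijectivity of $F = J$ is \eqref{eq:rho_bijective}; and \eqref{bdt1} reduces to consequences of \eqref{eq:rho_unitary} once the explicit forms of $\Phi$ and $\Psi$ are plugged in. Using the braided-group compatibility $(b\rightharpoonup c)(b\leftharpoonup c) = bc$, axiom \eqref{dt3} collapses to a tautology and so contributes nothing new. Finally, I write out \eqref{dt2} and invoke the analogous compatibility $(a\rightharpoonup b)(a\leftharpoonup b) = ab$ to simplify the $\Psi_1\Psi_2$-term after the substitution $(a,b)\mapsto (a\rightharpoonup b, a\leftharpoonup b)$; the $\rightharpoonup$-component then yields \eqref{eq:dt2.1-rho} and the $\leftharpoonup$-component yields \eqref{eq:dt2.2-rho}.

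For the converse, I start from \eqref{eq:rho_bijective}--\eqref{eq:dt2.2-rho}, define $\Phi$ and $\Psi$ by \eqref{eq:phi-rho}--\eqref{eq:psi-rho} (both bijective by \eqref{eq:rho_bijective}, since each is triangular with respect to its components), and reverse all the computations above to verify the seven twist axioms. The main delicate point is the rigidity of $\Phi$ and $\Psi$: since $J$ only acts on the first component, the remaining components of $\Phi$ and $\Psi$ are \emph{a priori} free, and it is the interlocking of \eqref{bdt3}, \eqref{bdt4}, and \eqref{dt1} that removes this ambiguity. Once this is settled, the remaining translations are mechanical bookkeeping.
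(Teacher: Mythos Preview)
Your proof is correct and follows essentially the same route as the paper's: you use \eqref{bdt3}, \eqref{bdt4}, and the componentwise unpacking of \eqref{dt1} to pin down $\Phi$ and $\Psi$ uniquely, then read off \eqref{eq:dt1.1-rho} from the first component of \eqref{dt1} and \eqref{eq:dt2.1-rho}--\eqref{eq:dt2.2-rho} from \eqref{dt2}, observing that \eqref{dt3} collapses via $(b\rightharpoonup c)(b\leftharpoonup c)=bc$. The only cosmetic difference is that the paper records the values of $\Phi(1,b,c)$ and $\Psi(a,b,1)$ coming from \eqref{bdt1} before invoking \eqref{dt1}, whereas you (correctly) note that \eqref{bdt1} becomes redundant once the explicit forms and \eqref{eq:rho_unitary} are in hand.
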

\begin{proof}
	Suppose that $J$ is a group Drinfeld twist. Observe that \eqref{eq:rho_bijective} must hold, in order for $J$ to be bijective. Equation \eqref{bdt2} yields immediately \eqref{eq:rho_unitary}. We search for suitable maps $\Phi$ and $\Psi$, and thus write
	\[ \Phi(a,b,c) = \Big( \Phi_1(a,b,c), \Phi_2(a,b,c), \Phi_3(a,b,c) \Big), \]
	and similarly for $\Psi$. Then, \eqref{bdt3} and \eqref{bdt4} yield
	\begin{align*} &\Phi_1(a,b,c) = \varrho_{bc}(a), && \Phi_2(a,b,c)\Phi_3(a,b,c) = bc,\\ & \Psi_1(a,b,c)\Psi_2(a,b,c) = \varrho_c(ab),&& \Psi_3(a,b,c) = c, \end{align*}
	while \eqref{bdt1} yields
	\begin{align*}
		& \Phi_1(1,b,c) = 1,&& \Phi_2(1,b,c) = b, &&\Phi_3(1,b,c) = c,
		\\
		& \Psi_1(a,b,1) = a,&& \Psi_2(a,b,1) = b, &&\Psi_3(a,b,1) = 1.
	\end{align*}
	From \eqref{dt1} we obtain
	\begin{align*}&\varrho_{\Psi_2(a,b,c)}\Psi_1(a,b,c)= \Phi_1(a,b,c),\\ & \Psi_2(a,b,c) = \varrho_{\Phi_3(a,b,c)}\Phi_2(a,b,c),\\ & \Psi_3(a,b,c) = \Phi_3(a,b,c), \end{align*}
	whence also
	\begin{align*}
		&\Phi_3(a,b,c) = c,\\  &\varrho_{bc}(a) = \varrho_{\varrho_c\Phi_2(a,b,c)}\Psi_1(a,b,c).\end{align*}
	Now, $ \Phi_3(a,b,c)= c$ together with $\Phi_2(a,b,c)\Phi_3(a,b,c) = bc$ imply\[ \Phi_2(a,b,c) = b,\]
	and hence
	\[ \Psi_1(a,b,c)\cdot \varrho_c(b) = \varrho_c(ab)\;\implies \;\Psi_1(a,b,c) = \varrho_c(ab) \cdot \varrho_c(b)^{-1}. \]
	In conclusion, we have obtained \eqref{eq:phi-rho} and \eqref{eq:psi-rho}.
	
	We can substitute these expressions in the axioms of a group Drinfeld twist, thus getting a set of conditions on $\varrho$. By substituting in \eqref{dt1} we get three conditions, of which the only nontrivial one is \eqref{eq:dt1.1-rho}. By doing the same for \eqref{dt2}, the only nontrivial conditions that we get are \eqref{eq:dt2.1-rho} and \eqref{eq:dt2.2-rho}; while substituting \eqref{dt3} we only get trivial conditions. The axioms  \eqref{bdt1}--\eqref{bdt4} are automatically satisfied assuming \eqref{eq:rho_bijective}--\eqref{eq:dt2.2-rho}.
	
	Conversely, it is clear that $J$ is a Drinfeld twist, with $\Phi$ and $\Psi$ defined as in \eqref{eq:phi-rho} and \eqref{eq:psi-rho} respectively, if and only if \eqref{eq:rho_bijective}--\eqref{eq:dt2.2-rho} hold.
\end{proof}
\begin{corollary}
	If $J\colon (a,b)\mapsto (a\leftharpoonup k(b), b)$ is a guitar map for some group reflection $k$, then $J$ is a group Drinfeld twist for \emph{unique} maps $\Phi$ and $\Psi$: namely, the maps defined in \eqref{eq:phi-rho} and \eqref{eq:psi-rho} with $\varrho_b(a):= a\leftharpoonup k(b)$.
\end{corollary}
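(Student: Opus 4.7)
The plan is to reduce the statement directly to Proposition \ref{prop:one_legged_twist}. First, I would observe that the guitar map $J(a,b) = (a\leftharpoonup k(b), b)$ has the one-legged form $(a,b)\mapsto (\varrho_b(a), b)$ with $\varrho_b(a) := a\leftharpoonup k(b)$, so it falls in the regime of that proposition provided the normalisation hypotheses hold. These are quick: $\varrho_1(a) = a\leftharpoonup k(1) = a\leftharpoonup 1 = a$ by \eqref{eq:reflection-unity} and \eqref{eq:a1-1a}, so $\varrho_1 = \id[G]$; and $\varrho_a(1) = 1\leftharpoonup k(a) = 1$ by \eqref{eq:1b-b1}.

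Next I would invoke Theorem \ref{thm:J-is-braided-Drinfeld}, which guarantees that $J$ is already a group Drinfeld twist (for \emph{some} choice of $\Phi$ and $\Psi$). The key point is that Proposition \ref{prop:one_legged_twist} is stated as an ``if and only if'' whose forward direction establishes more than just the equivalence: inspecting its proof, the components $\Phi_1,\Phi_2,\Phi_3,\Psi_1,\Psi_2,\Psi_3$ are \emph{forced} one by one by the axioms \eqref{bdt1}--\eqref{bdt4} together with \eqref{dt1}, with no freedom of choice left. Thus any $\Phi$, $\Psi$ witnessing that $J$ is a group Drinfeld twist must coincide with the expressions given in \eqref{eq:phi-rho} and \eqref{eq:psi-rho} evaluated at $\varrho_b(a) = a\leftharpoonup k(b)$.

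There is essentially no obstacle here: the work has already been done in Proposition \ref{prop:one_legged_twist}, and the corollary only records the uniqueness as an immediate consequence of the fact that the derivation in that proof leaves no degrees of freedom. The only thing worth double-checking in the writeup is that one is entitled to apply the proposition, i.e.\ that $\varrho_b = \blank\leftharpoonup k(b)$ is bijective for each $b$, which follows from non-degeneracy of the braiding $r$ on $G$ (recorded just after Definition \ref{def:br_group}).
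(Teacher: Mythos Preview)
Your proposal is correct and follows exactly the intended route: the corollary has no explicit proof in the paper because it is meant to be immediate from Proposition~\ref{prop:one_legged_twist} together with Theorem~\ref{thm:J-is-braided-Drinfeld}, which is precisely what you do. Your explicit verification of the normalisation conditions $\varrho_1=\id[G]$ and $\varrho_a(1)=1$ (needed to place $J$ in the scope of the proposition as stated) is a nice touch.
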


Proposition \ref{prop:one_legged_twist} takes an easier form in the special case of an abelian group $G$, endowed with the canonical flip $\tau\colon (a,b)\mapsto (b,a)$. It is known that $(G,\tau)$ is a braided group---and, in fact, $\tau$ is a braiding on a group $G$ if and only if $G$ is abelian.

\begin{proposition}
	Let $G$ be an abelian group, with braiding given by the canonical flip $\tau$. Then $(a,b)\mapsto (\varrho_b(a), b)$ is a group Drinfeld twist of type \ref{item:type2} if and only if $\varrho_c$ is a group isomorphism for all $c\in G$, and moreover \begin{equation}\label{eq:dt1_flip_case}\varrho_{bc}(a) = \varrho_{\varrho_c(b)}(\varrho_c(a))\end{equation} holds for all $a,b,c\in G$.
\end{proposition}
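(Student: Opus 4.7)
The plan is to apply Proposition \ref{prop:one_legged_twist} to $r = \tau$ and simplify. For the flip braiding on an abelian group, the associated operations are $a\rightharpoonup b = b$ and $a\leftharpoonup b = a$, so I would substitute these into the five conditions \eqref{eq:rho_bijective}--\eqref{eq:dt2.2-rho} of that Proposition.

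After substitution, conditions \eqref{eq:dt2.1-rho} and \eqref{eq:dt2.2-rho} both collapse, using the abelianity of $G$, to $\varrho_c(ab) = \varrho_c(a)\varrho_c(b)$; together with the bijectivity in \eqref{eq:rho_bijective}, this says exactly that $\varrho_c \in \Aut(G)$ for every $c\in G$. Plugging the homomorphism property into \eqref{eq:dt1.1-rho} replaces $\varrho_c(ab)\varrho_c(b)^{-1}$ by $\varrho_c(a)$, so \eqref{eq:dt1.1-rho} becomes exactly \eqref{eq:dt1_flip_case}. This handles the $(\Rightarrow)$ direction and the bulk of the $(\Leftarrow)$ direction.

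All that remains is to verify that the normalisation \eqref{eq:rho_unitary} is implied by the simplified conditions rather than being an extra hypothesis. The equality $\varrho_a(1)=1$ is automatic for a homomorphism; and $\varrho_1 = \id[G]$ follows by setting $b=c=1$ in \eqref{eq:dt1_flip_case} to get $\varrho_1(a) = \varrho_{\varrho_1(1)}(\varrho_1(a)) = \varrho_1(\varrho_1(a))$, which is cancelled using the bijectivity of $\varrho_1$. There is no serious obstacle here: the statement is essentially a clean specialisation of Proposition \ref{prop:one_legged_twist}, and the only step requiring any care is this mildly delicate check that \eqref{eq:rho_unitary} is redundant rather than silently missing from the hypotheses.
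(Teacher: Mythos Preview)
Your proposal is correct and follows the same route as the paper: specialise Proposition \ref{prop:one_legged_twist} to $r=\tau$, observe that \eqref{eq:dt2.1-rho} and \eqref{eq:dt2.2-rho} both reduce to the multiplicativity of $\varrho_c$, and that \eqref{eq:dt1.1-rho} then becomes \eqref{eq:dt1_flip_case}. Your explicit verification that \eqref{eq:rho_unitary} is redundant (via $b=c=1$ in \eqref{eq:dt1_flip_case} and bijectivity of $\varrho_1$) is in fact more careful than the paper, which simply asserts that ``all the other conditions follow immediately''.
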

\begin{proof}
	Condition \eqref{eq:dt2.2-rho} implies that $\varrho_c$ is a homomorphism, which is bijective by \eqref{eq:rho_bijective}; while \eqref{eq:dt1_flip_case} follows from \eqref{eq:dt1.1-rho}. Conversely, assuming that $\set{\varrho_c}_{c\in G}$ is a family of group isomorphisms satisfying \eqref{eq:dt1_flip_case}, all the other conditions of Proposition \ref{prop:one_legged_twist} follow immediately.
\end{proof}

\section{Extending reflections on the structure group of a solution}\label{sec:extending} 
It is well known that, given a Yang--Baxter map $r$ on $X$, we can construct a map $\tilde{r}$ on the free monoid $X^*$ on $X$, as in Figure \ref{fig:extension-r}. This map $\tilde{r}$ descends to a well defined map $\bar{r}$ on the structure monoid $M(X,r)$. 

If $r$ is bijective non-degenerate, then $\tilde{r}$ can be extended from the free monoid $X^*$ to the free group $\Free(X)$ on $X$, and this extension descends, in turn, to a map (which we call again $\bar{r}$) on the structure group $G(X,r)$. There is a unique way to extend $\tilde{r}$ to $\Free(X)$ so that it descends to a braiding on $G(X,r)$ \cite[Theorem 4]{LYZ}: if $\tilde{r}(a,b)$ is defined as $(a\,\tilde{\rightharpoonup}\, b, a\,\tilde{\leftharpoonup}\, b)$, then one defines
\begin{align*} &a^{-1}\tilde{\rightharpoonup}\, b:=( a\rightharpoonup \blank)^{-1}(b), && a\,\tilde{\leftharpoonup}\, b^{-1}:= (\blank\leftharpoonup b)^{-1}(a),\\
&a\,\tilde{\rightharpoonup} \,b^{-1} := ((a\,\tilde{\leftharpoonup}\,b^{-1})\rightharpoonup b)^{-1}, && a^{-1}\tilde{\leftharpoonup}\, b := ( a\leftharpoonup (a^{-1}\tilde{\rightharpoonup}\,b) )^{-1},
 \end{align*}
 \[ \tilde{r}(a^{-1}, b^{-1}) =  \big( (b\,\leftharpoondown a)^{-1}, (b \rightharpoondown a)^{-1}  \big), \]
 where $a,b$ lie in $X$, and $(a,b)\mapsto (a\rightharpoondown b, a\leftharpoondown b)$ is the map $r^{-1}\colon X\times X\to X\times X$. 

Similarly, we may extend a reflection $k$ to a map $\tilde{k}$ on the free monoid on $X$, as in Figure \ref{fig:extension-k}. In \cite{lebed2022reflection}, our $\tilde{k}$ is called the \textit{$k$-Garside map}: in the notation of \cite{lebed2022reflection}, we have $\tilde{k}|_{X^n} = \Delta^{n;k}$. By the relation
$\Delta^{n;k}r_i = r_{n-i}\Delta^{n;k}$
\cite[Theorem 1.8]{lebed2022reflection}, this $\tilde{k}$ also descends to the structure monoid $M(X,r)$, and induces a map $\bar{k}\colon M(X,r)\to M(X,r)$.
\begin{figure}[t]
\begin{subfigure}{0.4\linewidth}
\begin{center}
\begin{tikzpicture}[x=0.5cm, y=0.5cm]
\node (a1) at (-4,3) {$a_1$};
\node (a2) at (-3,3) {$a_2$};
\node (a3) at (-2,3) {$a_3$};
\node (a4) at (-1,3) {$a_4$};
\node (b1) at (2,3) {$b_1$};
\node (b2) at (3,3) {$b_2$};
\node (b3) at (4,3) {$b_3$};
\node (B1) at (-4,-3) {};
\node (B2) at (-3,-3) {};
\node (B3) at (-2,-3) {};
\node (A1) at (1,-3) {};
\node (A2) at (2,-3) {};
\node (A3) at (3,-3) {};
\node (A4) at (4,-3) {};
\draw[draw=white, double=black, very thick] (b1) to (B1);
\draw[draw=white, double=black, very thick] (b2) to (B2);
\draw[draw=white, double=black, very thick] (b3) to (B3);
\draw[draw=white, double=black, very thick] (a1) to (A1);
\draw[draw=white, double=black, very thick] (a2) to (A2);
\draw[draw=white, double=black, very thick] (a3) to (A3);
\draw[draw=white, double=black, very thick] (a4) to (A4);
\end{tikzpicture}
\end{center}
\caption{Extension of $r$ to $\Free(X)$.}\label{fig:extension-r}\end{subfigure}\begin{subfigure}{0.4\linewidth}
\begin{center}
\begin{tikzpicture}[x=0.6cm, y=0.6cm]
\node (a1) at (-2,3) {$a_1$};
\node (a2) at (-1,3) {$a_2$};
\node (a3) at (0,3) {$a_3$};
\node (a4) at (1,3) {$a_4$};
\node (A1) at (1,-2) {};
\node (A2) at (0,-2) {};
\node (A3) at (-1,-2) {};
\node (A4) at (-2,-2) {};
\draw[draw=white, double=black, very thick] (a4)--(2,2)--(A4);
\draw[draw=white, double=black, very thick] (a3)--(2,1)--(A3);
\draw[draw=white, double=black, very thick] (a2)--(2,0)--(A2);
\draw[draw=white, double=black, very thick] (a1)--(2,-1)--(A1);
\draw[draw=gray, line width=3pt] (2.1,3) to (2.1,-2);
\end{tikzpicture}
\end{center}
\caption{Extension of $k$ to $\Free(X)$.}\label{fig:extension-k}
\end{subfigure}\caption{Graphic depiction of how $r$ and $k$ are extended to the free group $\Free(X)$ and to the structure group $G(X,r)$.}\label{fig:extensions}\end{figure}
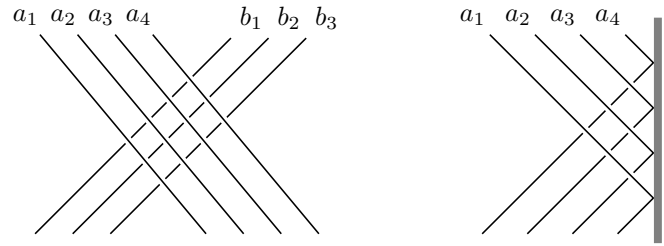
\begin{lemma}\label{lem:extend_reflection}
The map $\bar{k}$ is a set-theoretic reflection for the map $\bar{r}$ on the structure monoid $M(X,r)$.
\end{lemma}
\begin{proof}
The \textsc{re} for $\bar{k}$ is proven inductively. A graphical proof is given in Figure \ref{fig:proof}: consider $k_2 rk_2 r(a_1a_2\ldots a_n,\, b_{n+1}b_{n+2}\ldots b_m)$, move the first $n$ strands out of the way, and apply the \textsc{re} in sequence to bring the bouncing of the $(n+1)$st strand to the top, past the bouncings of the first $n$ strands; then move the first $n$ strands back in place. Repeat for the $(n+2)$nd strand, for the $(n+3)$rd, and so on.
\end{proof}

\begin{figure}[t]
\begin{center}
\begin{tikzpicture}[x=0.4cm, y=0.4cm]
\draw[draw=white, double=black, line width=2pt] (4,11) to [out=-135, in=90] (0,6);
\draw[draw=white, double=black, line width=2pt] (2,11)--(6,8) -- (0,0);
\draw[draw=white, double=black, line width=2pt] (0,11) --(6,5.6) -- (2,0);
\draw[draw=white, double=black, line width=2pt] (0,6)to[out=-90, in=140](6,2)--(4,0);
\draw[draw=gray, line width=3pt] (6.1,0) to (6.1,11);
\draw[draw=white, double=black, line width=2pt] (14,11) --(14,9);
\draw[draw=white, double=black, line width=2pt] (12,11)--(16,8) to[out=150, in=90] (10,0);
\draw[draw=white, double=black, line width=2pt] (14,9)--(14,5.6);
\draw[draw=white, double=black, line width=2pt] (10,11) --(16,5.6)--(12,0);
\draw[draw=white, double=black, line width=2pt] (14,5.6) to[out=-90, in=140] (16,2)--(14,0);
\draw[draw=gray, line width=3pt] (16.1,0) to (16.1,11);
\draw[draw=white, double=black, line width=2pt] (24,11) --(24,9);
\draw[draw=white, double=black, line width=2pt] (22,11)--(26,8) to[out=150, in=90] (20,0);
\draw[draw=white, double=black, line width=2pt] (24,9) to[out=-90, in=134](26,5);
\draw[draw=white, double=black, line width=2pt] (26,5) to[out=-130, in=90] (24,3);
\draw[draw=white, double=black, line width=2pt] (20,11) --(26,3)--(22,0);
\draw[draw=white, double=black, line width=2pt] (24,3) to[out=-90, in=90] (24,0);
\draw[draw=gray, line width=3pt] (26.1,0) to (26.1,11);
\draw[draw=white, double=black, line width=2pt](26,-8)--(20,-14);
\draw[draw=white, double=black, line width=2pt] (24,-3) --(26,-4)to[out=-140,in=90] (24,-6);
\draw[draw=white, double=black, line width=2pt] (22,-3)--(26,-8);
\draw[draw=white, double=black, line width=2pt] (24,-6) to[out=-90, in=90] (25.5,-9) to[out=-90, in=90] (24,-11);
\draw[draw=white, double=black, line width=2pt] (20,-3) --(26,-11)--(22,-14);
\draw[draw=white, double=black, line width=2pt] (24,-11) to[out=-90, in=90] (24,-14);
\draw[draw=gray, line width=3pt] (26.1,-14) to (26.1,-3);
\draw[draw=white, double=black, line width=2pt](16,-8)--(10,-14);
\draw[draw=white, double=black, line width=2pt] (14,-3) --(16,-4)to[out=-140,in=90] (11,-9);
\draw[draw=white, double=black, line width=2pt] (12,-3)--(16,-8);
\draw[draw=white, double=black, line width=2pt] (16,-11)--(12,-14);
\draw[draw=white, double=black, line width=2pt] (11,-9) to[out=-90, in=120] (14,-14);
\draw[draw=white, double=black, line width=2pt] (10,-3) --(16,-11);
\draw[draw=gray, line width=3pt] (16.1,-14) to (16.1,-3);
\node() at (8,5.5){$\stackrel{\text{\textsc{ybe}}}{\rightarrow}$};
\node() at (18,5.5){$\stackrel{\text{\textsc{re}}}{\rightarrow}$};
\node() at (22,-1.5){$\downarrow\text{\scriptsize \textsc{re}}$};
\node() at (18,-7.5){$\stackrel{\text{\textsc{ybe}}}{\leftarrow}$};
\end{tikzpicture}\caption{Proof of Lemma \ref{lem:extend_reflection}. The picture shows the case in which the two sides of the \textsc{re} are applied to a pair whose first entry has length $2$ and whose second entry has length $1$.}\label{fig:proof}
\end{center}
\end{figure}
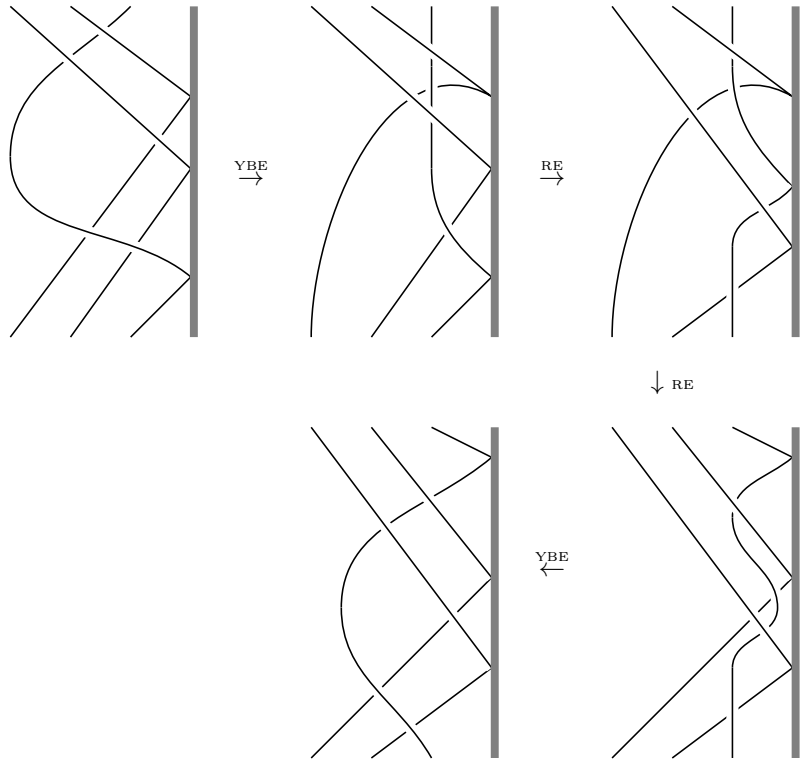

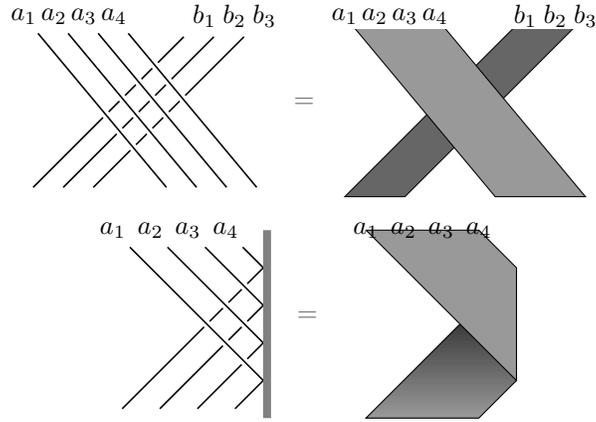
\begin{figure}[t]
\begin{center}
\begin{tikzpicture}[x=0.4cm, y=0.4cm]
\node (a1) at (-4,3) {$a_1$};
\node (a2) at (-3,3) {$a_2$};
\node (a3) at (-2,3) {$a_3$};
\node (a4) at (-1,3) {$a_4$};
\node (b1) at (2,3) {$b_1$};
\node (b2) at (3,3) {$b_2$};
\node (b3) at (4,3) {$b_3$};
\node (B1) at (-4,-3) {};
\node (B2) at (-3,-3) {};
\node (B3) at (-2,-3) {};
\node (A1) at (1,-3) {};
\node (A2) at (2,-3) {};
\node (A3) at (3,-3) {};
\node (A4) at (4,-3) {};
\draw[draw=white, double=black, very thick] (b1) to (B1);
\draw[draw=white, double=black, very thick] (b2) to (B2);
\draw[draw=white, double=black, very thick] (b3) to (B3);
\draw[draw=white, double=black, very thick] (a1) to (A1);
\draw[draw=white, double=black, very thick] (a2) to (A2);
\draw[draw=white, double=black, very thick] (a3) to (A3);
\draw[draw=white, double=black, very thick] (a4) to (A4);
\end{tikzpicture}
\raisebox{1.3cm}{=}
\begin{tikzpicture}[x=0.4cm, y=0.4cm]
\filldraw[fill=gray3, draw opacity=0] (2,3) -- (4,3) -- (-2,-3)-- (-4,-3)-- (2,3);
\filldraw[fill=gray2, draw opacity=0] (-4,3) -- (-1,3) -- (4,-3)-- (1,-3)-- (-4,3);
\filldraw[fill=white, draw=white] (-4.2, 3.2)--(4.2,3.2)--(4.2,2.6)--(-4.2,2.6)--(-4.2,3.2);
\node (a1) at (-4,3) {$a_1$};
\node (a2) at (-3,3) {$a_2$};
\node (a3) at (-2,3) {$a_3$};
\node (a4) at (-1,3) {$a_4$};
\node (b1) at (2,3) {$b_1$};
\node (b2) at (3,3) {$b_2$};
\node (b3) at (4,3) {$b_3$};
\node (B1) at (-4,-3) {};
\node (B2) at (-3,-3) {};
\node (B3) at (-2,-3) {};
\node (A1) at (1,-3) {};
\node (A2) at (2,-3) {};
\node (A3) at (3,-3) {};
\node (A4) at (4,-3) {};
\end{tikzpicture}
\end{center}

\begin{center}
\begin{tikzpicture}[x=0.5cm, y=0.5cm]
\node (a1) at (-2,3) {$a_1$};
\node (a2) at (-1,3) {$a_2$};
\node (a3) at (0,3) {$a_3$};
\node (a4) at (1,3) {$a_4$};
\node (A1) at (1,-2) {};
\node (A2) at (0,-2) {};
\node (A3) at (-1,-2) {};
\node (A4) at (-2,-2) {};
\draw[draw=white, double=black, very thick] (a4)--(2,2)--(A4);
\draw[draw=white, double=black, very thick] (a3)--(2,1)--(A3);
\draw[draw=white, double=black, very thick] (a2)--(2,0)--(A2);
\draw[draw=white, double=black, very thick] (a1)--(2,-1)--(A1);
\draw[draw=gray, line width=3pt] (2.1,3) to (2.1,-2);
\end{tikzpicture}\quad \raisebox{1.4cm}{=}\quad 
\begin{tikzpicture}[x=0.5cm, y=0.5cm]
\filldraw[draw opacity=0, bottom color=gray2, top color=black] (2,2) -- (2,-1) -- (1,-2)-- (-2,-2)-- (2,2);
\filldraw[fill=gray2, draw opacity=0] (-2,3) -- (1,3) -- (2,2)-- (2,-1)-- (-2,3);
\filldraw[fill=white, draw=white] (-2.2, 3.2)--(1.6,3.2)--(1.6,2.6)--(-2.2,2.6)--(-2.2,3.2);
\node (a1) at (-2,3) {$a_1$};
\node (a2) at (-1,3) {$a_2$};
\node (a3) at (0,3) {$a_3$};
\node (a4) at (1,3) {$a_4$};
\node (A1) at (1,-2) {};
\node (A2) at (0,-2) {};
\node (A3) at (-1,-2) {};
\node (A4) at (-2,-2) {};
\end{tikzpicture}
\end{center}
\caption{Ribbon notation for the solution and the reflection in $M(X,r)$. From now on, in depicting reflections, we will drop the `wall' on the right, because the ribbon notation removes the ambiguity.}\label{fig:ribbon}\end{figure}

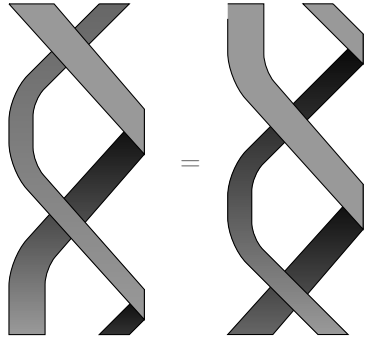
\begin{figure}[h]
\begin{center}
\begin{tikzpicture}[x=0.2cm, y=0.2cm]
\filldraw[draw opacity=0, top color=black, bottom color=gray4] (5,-16)--(5,-18)--(4,-19)--(2,-19)--(5,-16);
\filldraw[fill=gray2, draw opacity=0, top color=black, bottom color=gray2] (5,-4) -- (5,-7) [rounded corners=6pt]-- (-1.6,-14.6)[sharp corners]-- (-1.6,-19)--(-4,-19)[rounded corners=10pt]--(-4,-14)-- (5,-4);
\filldraw[draw opacity=0, top color=gray3, bottom color=gray2] (2,3)--(4,3)[rounded corners=6pt]--(-2.4,-3.4)--(-2.4,-7.6)[sharp corners]--(5, -16)--(5,-18)[rounded corners=10pt]--(-4,-8)--(-4,-3)--(2,3);
\filldraw[fill=gray2, draw opacity=0] (-4,3)--(-1,3)--(5,-4)--(5,-7)--(-4,3);
\end{tikzpicture}
\quad \raisebox{2.2cm}{=}\quad 
\begin{tikzpicture}[x=0.2cm, y=0.2cm]
\filldraw[fill=gray2, draw opacity=0, top color=black, bottom color=gray3] (5,-8)--(5,-11)--(-1,-18)--(-4,-18)--(5,-8);
\filldraw[draw opacity=0, bottom color=gray2, top color=black] (5,2)--(5,0)[rounded corners=6pt]--(-2.4,-7.4)--(-2.4,-11.6)[sharp corners]--(4,-18)--(2,-18)[rounded corners=10pt]--(-4,-12)--(-4,-7)--(5,2);
\filldraw[fill=gray2, draw opacity=0] (3,4)--(5,2)--(5,0)--(1,4)--(3,4);
\filldraw[fill=gray2, draw opacity=0] (-4,4)--(-1.6,4)[rounded corners=6pt]--(-1.6,-0.4)[sharp corners]--(5,-8)--(5,-11)[rounded corners=10pt]--(-4,-1)--(-4,3);
\end{tikzpicture}
\end{center}
\caption{Geometric interpretation of the reflection equation in $M(X,r)$, as a rule that makes the folds slide under each other.} \label{fig:reflection}\end{figure}

\begin{remark}\label{rem:kbar_respects_product}Observe that $\bar{k}$ satisfies \eqref{eq:reflection-unity} by construction, and \eqref{eq:reflection-product} by \cite[Theorem 1.8]{lebed2022reflection}. Indeed, \eqref{eq:reflection-product} corresponds to the fact that $\tilde{r}$ passes to a well defined map in the quotient. A graphical interpretation is reported in Figure \ref{fig:braided-refl}.

In general, $\bar{k}$ need not satisfy \eqref{eq:weird}: if $\bar{k}$ does, then in particular \eqref{eq:weird} holds in degree 1, so $k$ satisfies \eqref{eq:weird} as well, and this is generally not true (recall for instance that \eqref{eq:weird} holds for $k=\id$ if and only if $r$ is involutive). \end{remark}
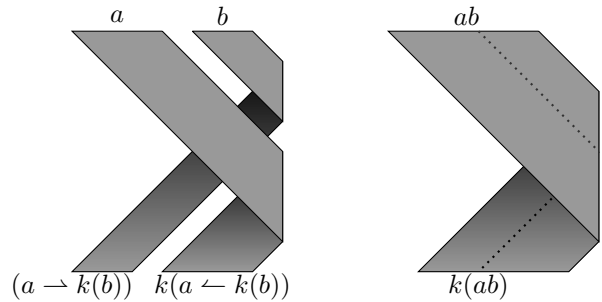
\begin{figure}[t]
\begin{center}
\begin{tikzpicture}[x=0.4cm, y=0.4cm]
\filldraw[draw opacity=0, bottom color=gray2, top color=black] (7,3)--(7,1)--(2,-4)--(0,-4)--(7,3);
\filldraw[draw opacity=0, bottom color=gray2, top color=black] (7,0)--(7,-3)--(6,-4)--(3,-4)--(7,0);
\filldraw[draw opacity=0, fill=gray2] (4,4)--(6,4)--(7,3)--(7,1)--(4,4);
\filldraw[draw opacity=0, fill=gray2] (3,4)--(7,0)--(7,-3)--(0,4)--(3,4);
\node() at (0,-4.5) {$ (a\rightharpoonup k(b))$};
\node() at (5,-4.5) {$ k(a\leftharpoonup k(b))$};
\node() at (1.5, 4.5) {$a$};
\node() at (5, 4.5) {$b$};
\end{tikzpicture}
\qquad\quad
\begin{tikzpicture}[x=0.4cm, y=0.4cm]
\filldraw[draw opacity=0, bottom color=gray2, top color=black] (7,2)--(7,-3)--(6,-4)--(1,-4)--(7,2);
\draw[thick, dotted] (7,0)--(3,-4);
\filldraw[draw opacity=0, fill=gray2] (5,4)--(7,2)--(7,-3)--(0,4)--(5,4);
\draw[thick, dotted,draw=gray4] (3,4)--(7,0);
\node() at (3,-4.5) {$ k(ab)$};
\node() at (2.6, 4.5) {$ab$};
\end{tikzpicture}
\end{center}
\caption{Graphic interpretation of \eqref{eq:reflection-product} in a braided group, as the consistency of a ribbon notation. The same picture justifies why this property holds true for $\bar{k}$ in $M(X,r)$.}\label{fig:braided-refl}
\end{figure}
\begin{lemma}\label{lem:weird}
The map $\bar{k}$ satisfies \eqref{eq:weird} if and only if $k$ satisfies \eqref{eq:weird}.
\end{lemma}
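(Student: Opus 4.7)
The plan is to reduce the statement directly to Remark \ref{rem:k|_X} applied to the generating set $X$ of $G(X,r)$.

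The \emph{only if} direction is immediate: since $\bar{r}$ restricts to $r$ on $X\times X$ and $\bar{k}$ restricts to $k$ on $X$ by construction, if \eqref{eq:weird} holds for all $a,b\in G(X,r)$, then it holds in particular for all $a,b\in X$, which is \eqref{eq:weird} for $k$.

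For the \emph{if} direction, observe that the hypotheses of Remark \ref{rem:k|_X} are all in place: the set $X$ generates $G(X,r)$; the braiding $\bar{r}$ restricts to $r$ on $X\times X$; the map $\bar{k}$ restricts to $k$ on $X$; and by Remark \ref{rem:kbar_respects_product}, $\bar{k}$ already satisfies \eqref{eq:reflection-unity} and \eqref{eq:reflection-product} on $G(X,r)$. If now $k$ satisfies \eqref{eq:weird} for all $a,b\in X$, then Remark \ref{rem:k|_X} yields that $\bar{k}$ satisfies \eqref{eq:weird} for all $a,b\in G(X,r)$, via the double induction sketched in Figure \ref{fig:proof_weird_0}.

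The only subtle point is checking that Remark \ref{rem:k|_X}, stated for a braided group with a generating set on which the braiding restricts, is genuinely applicable here; but this is guaranteed by the universal construction of the structure group $G(X,r)$. No additional calculation should be needed beyond the one already depicted in Figure \ref{fig:proof_weird_0}, so the proof is essentially a one-line reference.
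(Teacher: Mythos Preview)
Your proof is correct and follows exactly the paper's own argument: the ``only if'' direction is restriction to degree~$1$, and the ``if'' direction is a direct invocation of Remark~\ref{rem:k|_X} (with the preconditions supplied by Remark~\ref{rem:kbar_respects_product}). The only difference is that you spell out the verification of the hypotheses of Remark~\ref{rem:k|_X} more explicitly than the paper does.
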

\begin{proof}
If $\bar{k}$ satisfies \eqref{eq:weird}, then in particular this holds in degree $1$. Conversely, if \eqref{eq:weird} holds in degree $1$, observe that $M(X,r)$ is generated by the elements of degree $1$: thus the conclusion follows by Remark \ref{rem:k|_X}, where the product in the monoid is induced by the juxtaposition of words.
\end{proof}

\begin{theorem}\label{thm:extensiontogroup_k}
	Let $k$ satisfy \eqref{eq:weird}. Then the map $\tilde{k}$ on $X^*$ can be extended to a map $\tilde{k}$ on $\Free(X)$, which in turn induces a group reflection $\bar{k}$ on $G(X,r)$.
\end{theorem}
\begin{proof}
	We denote by $a$ an element of $X$, and by $a^{-1}$ an element of the set of formal inverses $X^{-1}$. We first try to extend $\tilde{k}$ to the set $(X\sqcup X^{-1})^*$: this extension will be ambiguously defined, but the ambiguity will disappear in the quotient $G(X,r)$.
	
	\subsubsection*{Definition of $\bar{k}$} A word is \textit{reduced} if it does not contain occurrences of $a a^{-1}$ or $a^{-1} a$ for any $a\in X$. If $a_1^{\varepsilon_1}a_2^{\varepsilon_2}$ is a reduced word, $\varepsilon_i = \pm 1$, we define $\tilde{k}(a_i^{\varepsilon_i}) = k(a_i)^{\varepsilon_i}$, and 
	\[  \tilde{k}(a_1^{\varepsilon_1}a_2^{\varepsilon_2}) = \left(a_1^{\varepsilon_1} \,\tilde{\rightharpoonup}\, k(a_2)^{\varepsilon_2}\right) \tilde{k}\left(  a_1^{\varepsilon_1} \,\tilde{\leftharpoonup}\, k(a_2)^{\varepsilon_2}\right).\]
	We would like to extend this definition to words of any length, by imposing \eqref{eq:reflection-product}. However, as we shall see, this is generally \textit{not} consistent with the associativity on words of length three. But it becomes consistent modulo the relations $xy \sim (x\,\tilde{\rightharpoonup}\, y)(x\,\tilde{\leftharpoonup}\, y)$, i.e., the map $\bar{k}$ on $G(X,r)$ will be well-defined and will satisfy \eqref{eq:reflection-product}.
	
	\subsubsection*{Twisted involutivity for words of length one} Assume that $k$ satisfies $\eqref{eq:weird}$. First of all, we prove that $\tilde{k}$ (and hence $\bar{k}$) satisfies \eqref{eq:weird} on the words of length 1.
	
	We first observe that the extension of $r$ is defined so that
	\[  (a_1^{\varepsilon_1}\rightharpoondown a_2^{\varepsilon_2} )^{-1}  = a_2^{-\varepsilon_2}\leftharpoonup a_1^{-\varepsilon_1},\quad (a_1^{\varepsilon_1}\leftharpoondown a_2^{\varepsilon_2})^{-1} = a_2^{-\varepsilon_2} \rightharpoonup a_1^{-\varepsilon_1}\]
	holds; and $\tilde{k}$ is defined so that \eqref{eq:inversek} holds on words of length at most two. Thus, with the same proof as in Lemma \ref{lem:inverseproperties}, one has that $\tilde{k}$ satisfies \eqref{eq:right_reflection-product} on words of length at most two, and \eqref{eq:right_weird} on words of length at most one.
	
	 Let $a,b\in X$.\begin{enumerate}
			\item The relation $k(a) = (a\rightharpoonup b)\rightharpoonup k(b\leftharpoonup b)$ holds because $k$ satisfies \eqref{eq:weird}.
			\item The relation $k(a) = (a\,\tilde{\rightharpoonup}\, b^{-1})\,\tilde{\rightharpoonup}\, k(a\,\tilde{\leftharpoonup}\, b^{-1})$ is equivalent to 
			\[  ((a\,\tilde{\leftharpoonup}\, b^{-1})\rightharpoonup b) \rightharpoonup k(a) = k(a\,\tilde{\leftharpoonup}\, b^{-1}), \]
			which is verified using
			\[k(\alpha) =  (\alpha\rightharpoonup \beta)\rightharpoonup k(\alpha\leftharpoonup \beta)
			\]
			with $\alpha = a\,\tilde{\leftharpoonup}\,b^{-1} \in X$ and $\beta = b\in X$.
			\item The relation $\tilde{k}(a^{-1}) = (a^{-1}\,\tilde{\rightharpoonup}\, b^{-1})\,\tilde{\rightharpoonup}\, \tilde{k}(a^{-1}\,\tilde{\leftharpoonup}\, b^{-1})$ follows from
			\begin{align*}
				& (a^{-1}\,\tilde{\rightharpoonup}\, b^{-1})\,\tilde{\rightharpoonup}\, \tilde{k}(a^{-1}\,\tilde{\leftharpoonup}\, b^{-1})\\
				&= (b\leftharpoondown a)^{-1} \rightharpoonup k(b\rightharpoondown a)^{-1}\\
				&= \left(  k(b\rightharpoondown a)\leftharpoondown (b\leftharpoondown a) \right)^{-1}\\
				\overset{\eqref{eq:weird}}&{=} k(a)^{-1}
			\end{align*}
			using \eqref{eq:right_weird} on the words of length one.
			\item The relation $\tilde{k}(a^{-1}) = (a^{-1}\,\tilde{\rightharpoonup}\, b)\rightharpoonup k(a^{-1}\,\tilde{\leftharpoonup}\, b)$ is seen to be equivalent to 
			\[  k(a) \leftharpoondown (b\leftharpoondown (b^{-1}\rightharpoondown a))   = k(b^{-1}\rightharpoondown a),\]
			which is verified using
			\[ k(\beta) = k(\alpha\rightharpoondown \beta) \leftharpoondown (\alpha\leftharpoondown \beta)  \]
			with $\alpha = b$ and $\beta = b^{-1}\rightharpoondown a$.
		\end{enumerate} 
	
	\subsubsection*{Extension of $\bar{k}$ to words of any length}Since $(\tilde{\rightharpoonup}\,,\,\tilde{\leftharpoonup})$ defines a braiding on $\Free(X)$ \cite{LYZ}, assuming that all words at each step are reduced, one has
	\begin{align*}
		\tilde{k}(a_1^{\varepsilon_1}(a_2^{\varepsilon_2}a_3^{\varepsilon_3})) &= \left( a_1^{\varepsilon_1} \,\tilde{\rightharpoonup}\, \tilde{k}(a_2^{\varepsilon_2}a_3^{\varepsilon_3})  \right) \tilde{k} \left( a_1^{\varepsilon_1}\,\tilde{\leftharpoonup}\,\tilde{k}(a_2^{\varepsilon_2}a_3^{\varepsilon_3})  \right) \\
		\overset{(\dagger)}&{=} \left( a_1^{\varepsilon_1}\,\tilde{\rightharpoonup}\, \left(\left(  a_2^{\varepsilon_2} \,\tilde{\rightharpoonup}\, k(a_3)^{\varepsilon_3} \right) \tilde{k} \left(  a_2^{\varepsilon_2} \,\tilde{\leftharpoonup}\, k(a_3)^{\varepsilon_3} \right) \right)  \right)  \\ 
		&\hspace{2em}\tilde{k} \left( a_1^{\varepsilon_1}\,\tilde{\leftharpoonup}\, \left(\left(  a_2^{\varepsilon_2} \,\tilde{\rightharpoonup}\, k(a_3)^{\varepsilon_3} \right) \tilde{k} \left(  a_2^{\varepsilon_2} \,\tilde{\leftharpoonup}\, k(a_3)^{\varepsilon_3} \right) \right)  \right) \\
		&= \left( a_1^{\varepsilon_1}a_2^{\varepsilon_2}\,\tilde{\rightharpoonup}\, k(a_3)^{\varepsilon_3}\right)  \left( \left(a_1^{\varepsilon_1}\,\tilde{\leftharpoonup}\,    \left(a_2^{\varepsilon_2} \,\tilde{\rightharpoonup}\, k(a_3)^{\varepsilon_3}\right)  \right) \,\tilde{\rightharpoonup}\, \tilde{k}\left(  a_2^{\varepsilon_2} \,\tilde{\leftharpoonup}\, k(a_3)^{\varepsilon_3}  \right)    \right)\\
		& \hspace{2em} \tilde{k} \left( \left(a_1^{\varepsilon_1}\,\tilde{\leftharpoonup}\,    \left(a_2^{\varepsilon_2} \,\tilde{\rightharpoonup}\, k(a_3)^{\varepsilon_3}\right)  \right) \,\tilde{\leftharpoonup}\, \tilde{k}\left(  a_2^{\varepsilon_2} \,\tilde{\leftharpoonup}\, k(a_3)^{\varepsilon_3}  \right)    \right)\\
		\overset{(\ddagger)}&{=} \left( a_1^{\varepsilon_1}a_2^{\varepsilon_2}\,\tilde{\rightharpoonup}\, k(a_3)^{\varepsilon_3}\right) \tilde{k}\left( a_1^{\varepsilon_1}a_2^{\varepsilon_2}\,\tilde{\leftharpoonup}\, k(a_3)^{\varepsilon_3}  \right)\\
		&= \tilde{k} ((a_1^{\varepsilon_1}a_2^{\varepsilon_2})a_3^{\varepsilon_3}).
	\end{align*}
	The only steps where we use that all words are reduced, are the ones marked with $(\dagger), (\ddagger)$. We check what happens if some of the words are not reduced.
	
	\begin{enumerate}\item If the word $a_2^{\varepsilon_2}a_3^{\varepsilon_3}$ is not reduced, which means $a_3^{\varepsilon_3}=(a_2^{\varepsilon_2})^{-1}$, then the step $(\dagger)$ is not valid any more. We assume that the word
	\[  \left(a_1^{\varepsilon_1}\,\tilde{\leftharpoonup}\,    \left(a_2^{\varepsilon_2} \,\tilde{\rightharpoonup}\, k(a_3)^{\varepsilon_3}\right)  \right) \left(  a_2^{\varepsilon_2} \,\tilde{\leftharpoonup}\, k(a_3)^{\varepsilon_3}  \right)  = (a_1^{\varepsilon_1}a_2^{\varepsilon_2})\,\tilde{\leftharpoonup}\, (k(a_3)^{\varepsilon_3})^{-1}\]
	is reduced, i.e.\@ that $a_1^{\varepsilon_1}a_2^{\varepsilon_2}$ is reduced, so that the step $(\ddagger)$ is valid. On the one hand, we have $\tilde{k} (a_1^{\varepsilon_1} (a_2^{\varepsilon_2} a_3^{\varepsilon_3}) ) = k(a_1)^{\varepsilon_1}$, and on the other hand
	\begin{align*}
		&\tilde{k}((a_1^{\varepsilon_1} a_2^{\varepsilon_2})a_3^{\varepsilon_3})\\
		&= \left( a_1^{\varepsilon_1}a_2^{\varepsilon_2}\,\tilde{\rightharpoonup}\, k(a_3)^{\varepsilon_3}\right) \tilde{k}\left( a_1^{\varepsilon_1}a_2^{\varepsilon_2}\,\tilde{\leftharpoonup}\, k(a_3)^{\varepsilon_3}  \right)\\
		&= \left( a_1^{\varepsilon_1}a_2^{\varepsilon_2}\,\tilde{\rightharpoonup}\, k(a_3)^{\varepsilon_3}\right)   \tilde{k} \left( \left(a_1^{\varepsilon_1}\,\tilde{\leftharpoonup}\,    \left(a_2^{\varepsilon_2} \,\tilde{\rightharpoonup}\, k(a_3)^{\varepsilon_3}\right)  \right) \left(  a_2^{\varepsilon_2} \,\tilde{\leftharpoonup}\, k(a_3)^{\varepsilon_3}  \right)\right)\\
		\overset{(\ddagger)}&{=} \left( a_1^{\varepsilon_1}a_2^{\varepsilon_2}\,\tilde{\rightharpoonup}\, k(a_3)^{\varepsilon_3}\right)  \left( \left(a_1^{\varepsilon_1}\,\tilde{\leftharpoonup}\,    \left(a_2^{\varepsilon_2} \,\tilde{\rightharpoonup}\, k(a_3)^{\varepsilon_3}\right)  \right) \,\tilde{\rightharpoonup}\, \tilde{k}\left(  a_2^{\varepsilon_2} \,\tilde{\leftharpoonup}\, k(a_3)^{\varepsilon_3}  \right)    \right)\\
		& \hspace{2em} \tilde{k} \left( \left(a_1^{\varepsilon_1}\,\tilde{\leftharpoonup}\,    \left(a_2^{\varepsilon_2} \,\tilde{\rightharpoonup}\, k(a_3)^{\varepsilon_3}\right)  \right) \,\tilde{\leftharpoonup}\, \tilde{k}\left(  a_2^{\varepsilon_2} \,\tilde{\leftharpoonup}\, k(a_3)^{\varepsilon_3}  \right)    \right)\\
		&= \left( a_1^{\varepsilon_1}\,\tilde{\rightharpoonup}\, \left(\left(  a_2^{\varepsilon_2} \,\tilde{\rightharpoonup}\, k(a_3)^{\varepsilon_3} \right) \tilde{k} \left(  a_2^{\varepsilon_2} \,\tilde{\leftharpoonup}\, k(a_3)^{\varepsilon_3} \right) \right)  \right)  \\ 
		&\hspace{2em}\tilde{k} \left( a_1^{\varepsilon_1}\,\tilde{\leftharpoonup}\, \left(\left(  a_2^{\varepsilon_2} \,\tilde{\rightharpoonup}\, k(a_3)^{\varepsilon_3} \right) \tilde{k} \left(  a_2^{\varepsilon_2} \,\tilde{\leftharpoonup}\, k(a_3)^{\varepsilon_3} \right) \right)  \right) \\
		&= (a_1^{\varepsilon_1}\,\tilde{\rightharpoonup}\, 1) \tilde{k}(a_1^{\varepsilon_1}\,\tilde{\leftharpoonup}\,1)\\
		&= k(a_1)^{\varepsilon_1},
	\end{align*}
	as desired. 
	\item If the word 
	\[  \left(a_1^{\varepsilon_1}\,\tilde{\leftharpoonup}\,    \left(a_2^{\varepsilon_2} \,\tilde{\rightharpoonup}\, k(a_3)^{\varepsilon_3}\right)  \right) \left(  a_2^{\varepsilon_2} \,\tilde{\leftharpoonup}\, k(a_3)^{\varepsilon_3}  \right)  = (a_1^{\varepsilon_1}a_2^{\varepsilon_2})\,\tilde{\leftharpoonup}\, (k(a_3)^{\varepsilon_3})^{-1}\]
	is not reduced, i.e.\@ if $(a_1^{\varepsilon_1})^{-1}=a_2^{\varepsilon_2}$, then the step $(\ddagger)$ is not valid anymore. We we assume that the word $a_2^{\varepsilon_2}a_3^{\varepsilon_3}$ is reduced, i.e., that the step $(\dagger)$ is valid. On the one hand,
	\[ \tilde{k}((a_1^{\varepsilon_1}a_2^{\varepsilon_2})a_3^{\varepsilon_3}) = k(a_3)^{\varepsilon_3}.\]
	On the other hand, 
		\begin{align*}
		&\tilde{k}(a_1^{\varepsilon_1}(a_2^{\varepsilon_2}a_3^{\varepsilon_3})) \\ &= \left( a_1^{\varepsilon_1} \,\tilde{\rightharpoonup}\, \tilde{k}(a_2^{\varepsilon_2}a_3^{\varepsilon_3})  \right) \tilde{k} \left( a_1^{\varepsilon_1}\,\tilde{\leftharpoonup}\,\tilde{k}(a_2^{\varepsilon_2}a_3^{\varepsilon_3})  \right) \\
		\overset{(\dagger)}&{=} \left( a_1^{\varepsilon_1}\,\tilde{\rightharpoonup}\, \left(\left(  a_2^{\varepsilon_2} \,\tilde{\rightharpoonup}\, k(a_3)^{\varepsilon_3} \right) \tilde{k} \left(  a_2^{\varepsilon_2} \,\tilde{\leftharpoonup}\, k(a_3)^{\varepsilon_3} \right) \right)  \right)  \\ 
		&\hspace{2em}\tilde{k} \left( a_1^{\varepsilon_1}\,\tilde{\leftharpoonup}\, \left(\left(  a_2^{\varepsilon_2} \,\tilde{\rightharpoonup}\, k(a_3)^{\varepsilon_3} \right) \tilde{k} \left(  a_2^{\varepsilon_2} \,\tilde{\leftharpoonup}\, k(a_3)^{\varepsilon_3} \right) \right)  \right) \\
		&= (a_1^{\varepsilon_1}a_2^{\varepsilon_2}\,\tilde{\rightharpoonup}\, k(a_3)^{\varepsilon_3})  \left( (a_1^{\varepsilon_1} \,\tilde{\leftharpoonup}\, (a_2^{\varepsilon_2}\,\tilde{\rightharpoonup}\, k(a_3)^{\varepsilon_3})) \,\tilde{\rightharpoonup}\, \tilde{k}(a_2^{\varepsilon_2}\,\tilde{\leftharpoonup}\, k(a_3)^{\varepsilon_3}) \right)\\
		& \hspace{2em}  \tilde{k} \left( a_1^{\varepsilon_1}\,\tilde{\leftharpoonup}\, \left(\left(  a_2^{\varepsilon_2} \,\tilde{\rightharpoonup}\, k(a_3)^{\varepsilon_3} \right) \tilde{k} \left(  a_2^{\varepsilon_2} \,\tilde{\leftharpoonup}\, k(a_3)^{\varepsilon_3} \right) \right)  \right).
	\end{align*}
	Now observe that $\tilde{k}(a^\varepsilon a^{-\varepsilon})=1$ implies \[\tilde{k}(a^\varepsilon\,\tilde{\leftharpoonup}\, k(a)^{-\varepsilon}) ((a^\varepsilon\,\tilde{\leftharpoonup}\, k(a)^{-\varepsilon})\,\tilde{\rightharpoonup}\, k(a)^\varepsilon) = 1\] by \eqref{eq:weird} on words of length one.
	\item Finally, we assume that both the words $a_1^{\varepsilon_1}a_2^{\varepsilon_2}$ and $a_2^{\varepsilon_2}a_3^{\varepsilon_3}$
	are not reduced, hence neither $(\dagger)$ nor $(\ddagger)$ is valid. This implies $a_1^{\varepsilon_1} = (a_2^{\varepsilon_2})^{-1} = a_3^{\varepsilon_3}$, therefore
	\[ \tilde{k} ((a_1^{\varepsilon_1}a_2^{\varepsilon_2})a_3^{\varepsilon_3}) = k(a_3)^{\varepsilon_3} = k(a_1)^{\varepsilon_1} = \tilde{k}(a_1^{\varepsilon_1}(a_2^{\varepsilon_2} a_3^{\varepsilon_3})), \]
	as desired.
	\end{enumerate}
	We have proven the relation \[\tilde{k}(a_1^{\varepsilon_1}(a_2^{\varepsilon_2} a_3^{\varepsilon_3})) \sim \tilde{k}((a_1^{\varepsilon_1}a_2^{\varepsilon_2}) a_3^{\varepsilon_3}).\]
	Therefore, if $\tilde{k}$ passes to a well-defined map $\bar{k}$ on the classes of words of length at most two in $G(X,r)$, then this map $\bar{k}$ can be extended to the whole of $G(X,r)$ consistently, by imposing \eqref{eq:reflection-product}. However, we still need to prove that $\tilde{k}$ passes to such a map $\bar{k}$.
		\subsubsection*{Inducing the map to the quotient} We then check that $\tilde{k}$ is compatible with the relations \[a_1^{\varepsilon_1} a_2^{\varepsilon_2} \sim (a_1^{\varepsilon_1}\,\tilde{\rightharpoonup}\, a_2^{\varepsilon_2})(a_1^{\varepsilon_1}\,\tilde{\leftharpoonup}\, a_2^{\varepsilon_2}).\]
		Knowing that $\tilde{k}$ is a set-theoretic reflection on $\Free(X)$, one has:
	\begin{align*}
		&\tilde{k} (a_1^{\varepsilon_1}a_2^{\varepsilon_2})\\
		&= (a_1^{\varepsilon_1}\,\tilde{\rightharpoonup}\, k(a_2)^{\varepsilon_2}) \tilde{k}(a_1^{\varepsilon_1}\,\tilde{\leftharpoonup}\, k(a_2)^{\varepsilon_2})\\
		&\sim \left( (a_1^{\varepsilon_1}\,\tilde{\rightharpoonup}\, k(a_2)^{\varepsilon_2})\,\tilde{\rightharpoonup}\,\tilde{k}(a_1^{\varepsilon_1}\,\tilde{\leftharpoonup}\, k(a_2)^{\varepsilon_2})    \right) \left( (a_1^{\varepsilon_1}\,\tilde{\rightharpoonup}\, k(a_2)^{\varepsilon_2})\,\tilde{\leftharpoonup}\,\tilde{k}(a_1^{\varepsilon_1}\,\tilde{\leftharpoonup}\, k(a_2)^{\varepsilon_2})   \right)\\
		\overset{\eqref{re}}&{=} \tilde{k}\left( (a_1^{\varepsilon_1} \,\tilde{\rightharpoonup}\, a_2^{\varepsilon_2}) (a_1^{\varepsilon_1} \,\tilde{\leftharpoonup}\, a_2^{\varepsilon_2})\right),
	\end{align*}
	as desired. 
		
	\subsubsection*{Conclusion} By definition of $\tilde{k}$, it is clear that the induced map $\bar{k}$ on $G(X,r)$ satisfies \eqref{eq:reflection-unity} and \eqref{eq:reflection-product} on words of length at most two. Since $\bar{k}$ has been extended to all words through \eqref{eq:reflection-product}, and we have observed that this definition is consistent, it is clear that $\bar{k}$ satisfies \eqref{eq:reflection-product} on the whole of $G(X,r)$.

We have proven \eqref{eq:weird} for $\bar{k}$ on a set of generators for $G(X,r)$ as a monoid; namely, the set $X\sqcup X^{-1}$. As observed in Lemma \ref{lem:weird}, then, \eqref{eq:weird} can be proven on words of any length, by double induction. Therefore, $\bar{k}$ is a group reflection. By its definition, it also satisfies \eqref{eq:inversek}.
\end{proof}
\begin{remark}
	If a group is abelian, then its group reflections are anti-homomorphisms (Remark \ref{rem:trivialbrace}). It is known that every map $f\colon X\to X$ can be extended to an anti-homomorphism on $\Free(X)$, and this extension is a subcase of our definition of $\tilde{k}$ in Theorem \ref{thm:extensiontogroup_k}, where both actions $\tilde{\rightharpoonup}, \,\tilde{\leftharpoonup}$ are trivial.
\end{remark}

The problem of whether a Drinfeld twist for $(X,r)$ can be extended to a group Drinfeld twist on $G(X,r)$ has already been posed by Ghobadi in \cite[Example 2.4]{ghobadi2021drinfeld}. Theorem \ref{thm:extensiontogroup_k} says that we can extend $k$ to $\bar{k}$ if $k$ satisfies \eqref{eq:weird}. On the other hand, if $\bar{k}$ is an extension of $k$ to $G(X,r)$, and it satisfies \eqref{eq:weird}, then in degree one $k$ must satisfy \eqref{eq:weird}: thus \eqref{eq:weird} is a necessary and sufficient condition for $k$ to be extended to a group reflection on $G(X,r)$. This solves Ghobadi's problem \cite[Example 2.4]{ghobadi2021drinfeld} in the case of Drinfeld twists coming from reflections.

\medskip

\noindent\textbf{Acknowledgements.} This work originated from a conversation with Victoria Lebed and Leandro Vendramin in Brussels, in November 2023. The author is especially  grateful to Victoria Lebed for suggesting that the $k$-Garside map could be a reflection on the structure group $G(X,r)$, and for many other insightful remarks; to an anonymous referee for some suggestions and improvements; to Leandro Vendramin for his constant support and advice; to Alessandro Ardizzoni for his feedback on this paper; and to Jonathan D.\@ H.\@ Smith for his interest and remarks. The content of \S\ref{sec:representations} was inspired by a question of Victoria Lebed, and was significantly improved after a conversation with Silvia Properzi and Charlotte Roelants. Proposition \ref{prop:iso_representations} was strengthened by a remark of Silvia Properzi, to whom the author is grateful.

During part of this research, the author was a member of the \textit{Istituto Nazionale di Alta Matematica} (INdAM) and supported by the PRIN 2022 of the Italian Ministry of University and Research, CUP D53D23005960006 Project 2022S97PMY. 

This research was supported by the University of Turin through a PNRR DM 118 scholarship; by the project
OZR3762 of the Vrije Universiteit Brussel; and through the
FWO Senior Research Project G004124N.
\medskip

\noindent\textbf{Statements and declarations.} The author has no potential conflict of interest to disclose. 

No new data has been produced in this research. The database \texttt{SmallSkewbrace} used in Example \ref{ex:ell_not_refl} is distributed with \textsc{gap} 4.13.0 \cite{gap4}, and available at \url{https://github.com/vendramin/YangBaxter}.
\medskip

\bibliographystyle{acm}
\bibliography{refs}

\begin{thebibliography}{10}

\bibitem{albano2024reflections}
{\sc Albano, A., Mazzotta, M., and Stefanelli, P.}
\newblock Reflections to set-theoretic solutions of the {Y}ang--{B}axter
  equation.
\newblock {\em J. Algebra 676\/} (2025), 106--138.

\bibitem{baxter1972partition}
{\sc Baxter, R.~J.}
\newblock Partition function of the eight-vertex lattice model.
\newblock {\em Ann. Physics 70\/} (1972), 193--228.

\bibitem{baxter1985exactly}
{\sc Baxter, R.~J.}
\newblock {\em Exactly solved models in statistical mechanics}.
\newblock Academic Press, Inc. [Harcourt Brace Jovanovich, Publishers], London,
  1989.
\newblock Reprint of the 1982 original.

\bibitem{setTheoreticRE}
{\sc Caudrelier, V., Cramp\'e, N., and Zhang, Q.~C.}
\newblock Set-theoretical reflection equation: classification of reflection
  maps.
\newblock {\em J. Phys. A 46}, 9 (2013), 095203, 12.

\bibitem{solitons}
{\sc Caudrelier, V., and Zhang, Q.~C.}
\newblock Yang--{B}axter and reflection maps from vector solitons with a
  boundary.
\newblock {\em Nonlinearity 27}, 6 (2014), 1081--1103.

\bibitem{de2019actions}
{\sc De~Commer, K.}
\newblock {Actions of skew braces and set-theoretic solutions of the reflection
  equation}.
\newblock {\em Proceedings of the Edinburgh Mathematical Society 62}, 4 (2019),
  1089--1113.

\bibitem{DoikouQuandlesAsPreLie}
{\sc Doikou, A., Rybo{\l}owicz, B., and Stefanelli, P.}
\newblock Quandles as pre-{L}ie skew braces, set-theoretic {H}opf algebras \&
  universal {R}-matrices.
\newblock {\em J. Phys. A 57}, 40 (2024), Paper No. 405203, 35.

\bibitem{YBandReflDoikou}
{\sc Doikou, A., and Smoktunowicz, A.}
\newblock Set-theoretic {Y}ang-{B}axter \& reflection equations and quantum
  group symmetries.
\newblock {\em Lett. Math. Phys. 111}, 4 (2021), Paper No. 105, 40.

\bibitem{drinfeld2006some}
{\sc Drinfeld, V.~G.}
\newblock On some unsolved problems in quantum group theory.
\newblock In {\em Quantum groups ({L}eningrad, 1990)}, vol.~1510 of {\em
  Lecture Notes in Math.} Springer, Berlin, 1992, pp.~1--8.

\bibitem{etingof1999set}
{\sc Etingof, P., Schedler, T., and Soloviev, A.}
\newblock Set-theoretical solutions to the quantum {Y}ang--{B}axter equation.
\newblock {\em Duke Math. J. 100}, 2 (1999), 169--209.

\bibitem{gap4}
{\sc The GAP~Group}.
\newblock {\em {GAP -- Groups, Algorithms, and Programming, Version 4.13.0}},
  2024.

\bibitem{garside1969braid}
{\sc Garside, F.~A.}
\newblock The braid group and other groups.
\newblock {\em Quart. J. Math. Oxford Ser. (2) 20\/} (1969), 235--254.

\bibitem{ghobadi2021drinfeld}
{\sc Ghobadi, A.}
\newblock Drinfeld twists on skew braces.
\newblock {\em arXiv preprint arXiv:2105.03286\/} (2021).

\bibitem{GHOBADI2021607}
{\sc Ghobadi, A.}
\newblock Skew braces as remnants of co-quasitriangular {H}opf algebras in
  {S}up{L}at.
\newblock {\em J. Algebra 586\/} (2021), 607--642.

\bibitem{guarnieri2017skew}
{\sc Guarnieri, L., and Vendramin, L.}
\newblock Skew braces and the {Y}ang--{B}axter equation.
\newblock {\em Math. Comp. 86}, 307 (2017), 2519--2534.

\bibitem{YangBaxterGAP}
{\sc Konovalov, O., and Vendramin, L.}
\newblock {\em {YangBaxter package for GAP, Version 0.10.6}}, 2024.

\bibitem{kulish2000twisting}
{\sc Kulish, P.~P., and Mudrov, A.~I.}
\newblock On twisting solutions to the {Y}ang-{B}axter equation.
\newblock {\em Czechoslovak J. Phys. 50}, 1 (2000), 115--122.

\bibitem{lebed2022reflection}
{\sc Lebed, V., and Vendramin, L.}
\newblock Reflection equation as a tool for studying solutions to the
  {Y}ang--{B}axter equation.
\newblock {\em J. Algebra 607\/} (2022), 360--380.

\bibitem{LYZ}
{\sc Lu, J.-H., Yan, M., and Zhu, Y.-C.}
\newblock On the set-theoretical {Y}ang--{B}axter equation.
\newblock {\em Duke Math. J. 104}, 1 (2000), 1--18.

\bibitem{rump2007braces}
{\sc Rump, W.}
\newblock Braces, radical rings, and the quantum {Y}ang--{B}axter equation.
\newblock {\em J. Algebra 307}, 1 (2007), 153--170.

\bibitem{ReflectionVenSmoWes}
{\sc Smoktunowicz, A., Vendramin, L., and Weston, R.}
\newblock Combinatorial solutions to the reflection equation.
\newblock {\em J. Algebra 549\/} (2020), 268--290.

\bibitem{yang}
{\sc Yang, C.~N.}
\newblock Some exact results for the many-body problem in one dimension with
  repulsive delta-function interaction.
\newblock {\em Phys. Rev. Lett. 19\/} (1967), 1312--1315.

\end{thebibliography}

\end{document}